\documentclass[11pt]{article}

\usepackage{amssymb, amsthm, amsmath}
\usepackage{mathtools}
\usepackage{derivative}
\usepackage{cleveref}
\setlength{\textwidth}{6.25in}
\setlength{\oddsidemargin}{0.125in}
\setlength{\textheight}{9.3in}
\setlength{\topmargin}{-.75in}

\renewcommand{\phi}{\varphi}

%% \integer

%% \real
\newcommand*{\real}{\mathbb{R}}
%% \realproj

%% \complex
\newcommand*{\complex}{\mathbb{C}}
%% \quaternion
\newcommand*{\quaternion}{\mathbb{H}}

%% \Ogroup
\newcommand*{\Ogroup}{\mathrm{O}}
%% \SOgroup
\newcommand*{\SOgroup}{\mathrm{SO}}
%% \GLgroup
\newcommand*{\GLgroup}{\mathrm{GL}}
%% \SLgroup

%% \PSLgroup

%% \Pin
\newcommand*{\Pin}{\mathrm{Pin}}
%% \Spin
\newcommand*{\Spin}{\mathrm{Spin}}
%% \Mat
\newcommand*{\Mat}{\mathrm{Mat}}

\newcommand*{\B}{\mathrm{B}}

\newcommand*{\Id}{\mathrm{Id}}

%% \clifford
\newcommand*{\clifford}{\mathrm{Cl}}
%% \lipschitz
\newcommand*{\lipschitz}{\Gamma}

%% \gradeinv{a}
%% \Gradeinv{arg}
\newcommand*{\gradeinv}[1]{\hat{#1}}
\newcommand*{\Gradeinv}[1]{\widehat{#1}}
%% \reversal{a}
%% \Reversal{arg}
\newcommand*{\reversal}[1]{\tilde{#1}}
\newcommand*{\Reversal}[1]{\widetilde{#1}}
%% \conj{a}
\newcommand*{\conj}[1]{\bar{#1}}
%\newcommand*{\Conj}[1]{\overline{#1}}

%% \tran
\newcommand*{\tran}{\mathsf T}

\let\widebar\relax
\newcommand{\widebar}[1]{\mkern 1.5mu\overline{\mkern-1.5mu#1\mkern-1.5mu}\mkern 1.5mu}
%% \Conj{arg}
\newcommand*{\Conj}[1]{\widebar{#1}}

\DeclareMathOperator{\sgn}{sgn}

\begingroup
\catcode`|=\active
\gdef\@changevert{\def|{\:\delimsize\vert\:}}
\endgroup

%\@ifpackageloaded{mathtools}{
    %% \absval{arg}
    \DeclarePairedDelimiter\absval{\lvert}{\rvert}

    %% \inner{}{}
    \DeclarePairedDelimiterX{\inner}[2]{\langle}{\rangle}{#1, #2}

    \newcommand\DeclarePairedDelimiterV[3]{\expandafter\DeclarePairedDelimiterX\csname#1\endcsname[1]#2#3{\mathcode`|="8000\@changevert ##1}}
    %% \set{arg}
    \DeclarePairedDelimiterV{set}{\{}{\}}
    %% \Set{arg}
    
%}{}

\AtBeginDocument{
    \newtheorem{theorem}{Theorem}
    \newtheorem{lemma}[theorem]{Lemma}
    \newtheorem{proposition}[theorem]{Proposition}
    \newtheorem{corollary}[theorem]{Corollary}
    \theoremstyle{definition}
    \newtheorem{definition}[theorem]{Definition}
    
}

%\addbibresource{References.bib}

\RequirePackage[backend=biber,
style=alphabetic
%citestyle=apalike
]{biblatex}

\bibliography{References}

\title{Conformal Invariance of Clifford Monogenic Functions in the Indefinite Signature Case}
%\date
\author{Chen Liang\footnote{Undergraduate student at the University of California, 1 Shields Ave, Davis, CA 95616}
and Matvei Libine\footnote{Department of Mathematics, Indiana University, Rawles Hall, 831 East 3rd St, Bloomington, IN 47405}}

\begin{document}

\maketitle

\begin{abstract}
We extend constructions of classical Clifford analysis to the case of
indefinite non-degenerate quadratic forms.
Clifford analogues of complex holomorphic functions
-- called monogenic functions -- are defined by means of
the Dirac operators that factor a certain wave operator.
%The subject of this paper is the property of invariance of monogenic functions
%under conformal or M\"obius transformations.
One of the fundamental features of quaternionic analysis is the
invariance of quaternionic analogues of holomorphic function
%-- called regular functions --
under conformal (or M\"obius) transformations.
A similar invariance property is known to hold in the context of
Clifford algebras associated to positive definite quadratic forms.
We generalize these results to the case of Clifford algebras
associated to all non-degenerate quadratic forms.
This result puts the indefinite signature case on the same footing as the
classical positive definite case.
%Clifford algebras generalize many features of complex numbers and quaternions.
%Quaternionic analogues of holomorphic functions -- called regular functions --
%are known to be invariant under conformal or M\"obius transformations.
%It is also known that a similar invariance property holds in the context of
%Clifford algebras associated to positive definite quadratic forms.
%In this project, we generalize these results to the case of Clifford algebras
%associated to all non-degenerate quadratic forms.
%This approach puts the indefinite signature case on the same footing as the
%classical positive definite case.
\end{abstract}

{\bf Keywords:}
conformal transformations on $\mathbb{R}^{p,q}$,
M\"obius transformations, Vahlen matrices,
monogenic functions, Clifford analysis, Clifford algebras,
conformal compactification.

\section{Introduction}

Many results of complex analysis have analogues in quaternionic analysis.
In particular, there are analogues of complex holomorphic functions called
(left and right) regular functions.
Some of the most fundamental features of quaternionic analysis are
the quaternionic analogue of Cauchy's integral formula
(usually referred to as Cauchy-Fueter formula) and
the invariance of quaternionic regular functions under
conformal (or M\"obius) transformations.
For modern introductions to quaternionic analysis see, for example,
\cite{Sudbery.QuaternionicAnalysis, colombo2004analysis}.

Complex numbers $\complex$ and quaternions $\quaternion$ are special cases of
Clifford algebras.
(For elementary introductions to Clifford algebras see, for example,
\cite{Chevalley.AlgebraicTheorySpinor, Garling.CliffordIntroduction}.)
There is a further extension of complex and quaternionic analysis called
Clifford analysis.
For Clifford algebras associated to positive definite quadratic forms on real
vector spaces, Clifford analysis is very similar to complex and quaternionic
analysis (see, for example,
\cite{brackx1982clifford, gilbert1991clifford, delanghe1992clifford}
and references therein).
Furthermore, Ryan has initiated the study of Clifford analysis in the
setting of complex Clifford algebras \cite{Ryan.ComplexifiedClifford}.

Let \(\clifford(V)\) be the universal Clifford algebra associated to a
real vector space \(V\) with non-degenerate quadratic form \(Q\).
The Dirac operator \(D\) on \(V\) is introduced by means of factoring
the wave operator associated to \(Q\).
Then the Clifford monogenic functions \(f: V \to \clifford(V)\) are defined
as those satisfying \(Df=0\).
An analogue of Cauchy's integral formula for such functions was established in
\cite{Libine_Sandine_2021}.
In this paper we focus on another important feature of Clifford analysis
-- conformal invariance of monogenic functions.
If \(A = \begin{psmallmatrix} a & b \\ c & d \end{psmallmatrix}\)
is a Vahlen matrix producing a conformal (or M\"obius) transformation on \(V\)
\begin{equation}\label{ctransf-intro}
    x \mapsto (ax+b)(cx+d)^{-1},
\end{equation}
and \(f: V \to \clifford(V)\) is a monogenic function, then it is known
in the case of positive definite quadratic forms \(Q\) that
\begin{equation}\label{eq:result_preview}
  J_A(x) f(A x) = \frac{(cx+d)^{-1}}
  {\absval{(x\reversal{c} + \reversal{d})(cx+d)}^{n/2-1}} f(Ax)
\end{equation}
is also monogenic \cite{Ryan.SingularitiesClifford, Bojarski1989}.
We extend this result to all non-degenerate quadratic forms \(Q\)
having arbitrary signatures.
While the formula in the mixed signature case visually appears the same,
its meaning is slightly different. First of all, the transformation
\eqref{ctransf-intro} takes place on the conformal closure of the vector
space \(V\), and it is different from the one-point compactification of \(V\).
Secondly, one needs to revisit the definition of the monogenic functions in
this context and choose the "right" Dirac operator \(D\).
The choice of signs in our definition of the operator \(D\) is compatible with
\cite{Bures-Soucek, Budinich1988, Libine_Sandine_2021}.
Furthermore, we argue that this is the most natural choice of the Dirac
operator that makes the results valid, while other choices would not work.
Interestingly, Bure\v s-Sou\v cek \cite{Bures-Soucek} arrive at the same
Dirac operator from a different perspective -- invariance under the action
of the group $\Spin(p,q)$.

These two features of Clifford analysis with indefinite signature
-- the analogue of Cauchy's integral formula and the conformal invariance
of monogenic functions put the indefinite signature case on the same
footing as the classical positive definite case.

The paper is organized as follows.
Section \ref{review} is a review of relevant topics.
We start with the conformal transformations and
the conformal compactification of \(V\).
Then we discuss Clifford algebras and associated groups --
such as Lipschitz, pin and spin groups -- and their actions on \(V\)
by orthogonal linear transformations.
After this we introduce the basics of Clifford analysis:
the Dirac operator \(D\) and the left/right monogenic functions.
We show that our definition of \(D\) is independent of the choice of
basis of \(V\) (Lemma \ref{D-basis-independence}) and give an alternative
basis-free way of defining \(D\) (Proposition \ref{D-basis-independence}).
In Section \ref{Vahlen} we introduce Vahlen matrices and explain
the relation between Vahlen matrices and conformal transformations on \(V\).
The results of this section are well known, especially in the positive
definite case (see, for example,
\cite{Ahlfors.MobiusClifford, Vahlen1902, Maks.CliffordMobius,
  Cnops.VahlenIndefinite} and references therein), and we mostly follow
\cite{Maks.CliffordMobius}.
In Section \ref{invariance} we prove our main result (Theorem \ref{main})
that the function \eqref{eq:result_preview} is left monogenic.
We also state a similar formula for right monogenic functions.
We emphasize that our proof is independent of the signature of
the quadratic form \(Q\).
Finally, in Appendix \ref{appendix} we discuss what would happen if
the Dirac operator were defined differently and show that a different
choice of signs in the definition of the operator \(D\) would result
in the loss of the conformal invariance.
In \cite{Libine_Sandine_2021} the same choice of signs
was motivated by completely different reasons.

\thanks{
This research was made possible by the Indiana University, Bloomington,
Math REU (research experiences for undergraduates) program,
funded by NSF Award \#2051032.
We would like to thank Professor Dylan Thurston for organizing and
facilitating this program.
We would also like to thank Ms. Mandie McCarthy and Ms. Amy Bland for
their administrative work, the various professors for their talks,
and the other REU students for their company.
Finally, we would like to thank the reviewers for their feedback
and suggesting additional references.
}

\section{Conformal Transformations, Clifford Algebras and Monogenic Functions}  \label{review}

In this section we briefly review the fundamentals of
conformal transformations, Clifford algebras and
Clifford analysis and establish our notations.

\subsection{The Conformal Compactification of $\real^{p,q}$}

We start with a review of the conformal compactification \(N(V)\)
of a vector space \(V \simeq \mathbb{R}^{p,q}\)
and the action of the indefinite orthogonal group
\(\Ogroup(V\oplus\real^{1,1}) \simeq \Ogroup(p+1,q+1)\) on \(N(V)\)
by conformal transformations.
The reader may wish to refer to, for example,
\cite{Schottenloher.CFT, kobayashi1995transformation}
for more detailed expositions of the results.

Let $V$ be an $n$-dimensional real vector space,
and $Q$ a quadratic form on $V$.
Corresponding to $Q$, there is a symmetric bilinear form $B$ on $V$.
The form $Q$ can be diagonalized: there exist an orthogonal basis
$\{e_1,\dots, e_n\}$ of $(V,Q)$ and integers $p$, $q$ such that
\begin{equation}  \label{V-basis}
Q(e_j)=
\begin{cases} 1 & 1 \leq j \leq p;\\
-1 & p+1 \leq j \leq p+q; \\
0 & p+q < j \leq n. \end{cases}
\end{equation}
By the Sylvester's Law of Inertia, the numbers $p$ and $q$ are independent of
the basis chosen. The ordered pair $(p,q)$ is called the signature of $(V,Q)$.
From this point on we restrict our attention to non-degenerate quadratic forms
$Q$, in which case $p+q=n$.
If $Q$ is such a form with $q=0$ or $p=0$, then it is positive or negative
definite respectively.

Associated with the quadratic form is the indefinite orthogonal group
consisting of all invertible linear transformations of $V$ that preserve $Q$:
$$
\Ogroup(V)= \bigl\{ T \in \GLgroup(V) ;\:
Q(Tv)=Q(v)\text{ for all } v \in V \bigr\}.
$$

We frequently identify $(V,Q)$ with the generalized Minkowski space
$\real^{p,q}$, which is the real vector space $\real^{p+q}$ equipped with
indefinite quadratic form
$$
Q(u) = (x_1)^2+\dots+(x_p)^2-(x_{p+1})^2-\dots-(x_{p+q})^2.
$$
The symmetric bilinear form associated to $Q$ is
\begin{align*}
B(u,v)&=
\tfrac12 \bigl[ Q(u+v) - Q(u) - Q(v) \bigr] \\
&=x_1y_1+\dots+x_py_p-x_{p+1}y_{p+1}-\dots -x_{p+q}y_{p+q}.
\end{align*} 
In this context, it is common to write $\Ogroup(p,q)$ for the indefinite
orthogonal group $\Ogroup(\real^{p,q})$.

By a {\em conformal transformation} of $(V,Q)$, we mean a smooth mapping
$\phi: U \to V$, where $U \subset V$ is a non-empty connected open subset,
such that the pull-back
\begin{equation}  \label{conformal-transf-condition}
\phi^*B(u,v) = \Omega^2 \cdot B(u,v)
\end{equation}
for some smooth function $\Omega: U \to (0,\infty)$.
(Note that we do not require conformal transformations to be orientation
preserving.)

The {\em conformal compactification} $N(V)$ of $V$ is constructed using
an ambient vector space $V \oplus \real^{1,1}$.
The $2$-dimensional space $\real^{1,1}$ has a standard basis
\(\set{e_-, e_+}\), and the quadratic form $Q$ on $V$ naturally extends to
a form $\hat Q$ on $V \oplus \real^{1,1}$ so that
$$
\hat Q(v + x_-e_- + x_+e_+) = Q(v) - (x_-)^2 + (x_+)^2.
$$
By definition, $N(V)$ is a quadric in the real projective space:
$$
N(V) = \bigl \{ [\xi] \in P(V \oplus \real^{1,1}) ;\: \hat{Q}(\xi)=0 \bigr\}.
$$
Then $N(V)$ has a natural conformal structure inherited from
$V \oplus \real^{1,1}$.
More concretely, $V \oplus \real^{1,1}$ contains the product of unit
spheres $S^p \times S^q$, and the quotient map
$\varpi: V \oplus \real^{1,1} \setminus \{0\} \twoheadrightarrow
P(V \oplus \real^{1,1})$
restricted to $S^p \times S^q$ induces a smooth $2$-to-$1$ covering
$$
\pi : S^p\times S^q \to N(V),
$$
which is a local diffeomorphism.
The product $S^p \times S^q$ has a semi-Riemannian metric coming from the
product of metrics --  the first factor having the standard metric from
the embedding  $S^p \subset \real^{p+1}$ as the unit sphere,
and the second factor $S^q$ having the negative of the standard metric.
Then the semi-Riemannian metric on $N(V)$ is defined by declaring
the covering $\pi : S^p\times S^q \to N(V)$ to be a local isometry.
We also see that $N(V)$ is compact, since it is the image of a compact set
$S^p \times S^q$ under a continuous map $\pi$.

The embedding map $\iota: V \hookrightarrow N(V)$ can be described as follows.
We embed $V$ into the null cone (without the origin)
\begin{equation}  \label{null-cone}
{\mathcal N} = \bigl \{ \xi \in V \oplus \real^{1,1} ;\: \xi \ne 0,\:
\hat{Q}(\xi)=0 \bigr\}
\end{equation}
via
\begin{equation}  \label{conf-embedding}
\hat\iota(v) = v + \frac{1 + Q(v)}2 e_- + \frac{1 - Q(v)}2 e_+,
\end{equation}
and compose $\hat\iota$ with the quotient map
$V \oplus \real^{1,1} \setminus \{0\} \twoheadrightarrow P(V \oplus \real^{1,1})$.

\begin{proposition}
The quadric $N(V)$ is indeed a conformal compactification of $V$.
That is, the map $\iota : V \to N(V)$ is a conformal embedding,
and the closure $\overline{\iota(V)}$ is all of $N(V)$.
\end{proposition}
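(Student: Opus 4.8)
The plan is to reduce everything to explicit computations with the section $\hat\iota$, exploiting that $\hat\iota$ takes values in the affine hyperplane $H = \{\,\xi \in V\oplus\real^{1,1} : x_-(\xi) + x_+(\xi) = 1\,\}$, where $x_\pm$ are the coordinates on $\real^{1,1}$ dual to $e_\pm$. First I would settle the formal points. A one-line computation gives
\[
  \hat Q\bigl(\hat\iota(v)\bigr) = Q(v) - \Bigl(\tfrac{1+Q(v)}{2}\Bigr)^2 + \Bigl(\tfrac{1-Q(v)}{2}\Bigr)^2 = 0 ,
\]
and $\hat\iota(v) \neq 0$ since the sum of its $e_-$- and $e_+$-coefficients is $1$; thus $\hat\iota$ maps $V$ into $\mathcal N \cap H$. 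Conversely, if $\xi = w + s e_- + t e_+ \in \mathcal N$ has $s + t = 1$, then $\hat Q(\xi) = 0$ forces $s = \tfrac{1+Q(w)}{2}$ and $t = \tfrac{1-Q(w)}{2}$, i.e.\ $\xi = \hat\iota(w)$; hence $\hat\iota \colon V \to \mathcal N \cap H$ is a diffeomorphism, its inverse being the restriction of the linear projection $V\oplus\real^{1,1} \to V$. Since $\varpi$ restricts to a diffeomorphism of $H$ onto the affine chart $\{\,[\xi] : x_-(\xi) + x_+(\xi) \neq 0\,\}$ of $P(V\oplus\real^{1,1})$, the composite $\iota = \varpi \circ \hat\iota$ is a diffeomorphism of $V$ onto the open subset $\iota(V) = N(V) \cap \{x_- + x_+ \neq 0\}$ of $N(V)$; in particular $\iota$ is an embedding.

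The substance of the statement is conformality. For $v \in V$ write $v = v^+ + v^-$ for the decomposition into $\hat Q$-positive and $\hat Q$-negative parts, and set $r(v)^2 = \lvert v^-\rvert^2 + \bigl(\tfrac{1+Q(v)}{2}\bigr)^2$, a polynomial in $v$; using also the equal value $\lvert v^+\rvert^2 + \bigl(\tfrac{1-Q(v)}{2}\bigr)^2$ one checks $r(v)^2 \geq \tfrac14$, so $r$ is smooth and everywhere positive. One verifies that $\hat\iota(v)/r(v)$ lies on $S^p \times S^q$ (realized inside $\mathcal N$ via $(a,b) \mapsto a + b$), so $v \mapsto \hat\iota(v)/r(v)$ is a smooth lift of $\iota$ through $\pi$. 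Now $\pi$ is a local isometry by construction, and the embedding $(a,b)\mapsto a+b$ of $S^p \times S^q$ into $V\oplus\real^{1,1}$ pulls $\hat Q$ back to the metric of $S^p \times S^q$ (the $\hat Q$-positive and $\hat Q$-negative parts are $\hat Q$-orthogonal, and $\hat Q$ restricts to a positive definite, respectively negative definite, form on them, matching the two sphere metrics). Hence, writing $g$ for the metric of $N(V)$, the lift gives $\iota^* g = F^* \hat Q$ with $F = \hat\iota/r$. Differentiating the identity $\hat Q(\hat\iota(v)) \equiv 0$ shows that $\hat\iota(v)$ is $\hat Q$-null and $\hat Q$-orthogonal to $\mathrm d\hat\iota_v(u)$ for every $u$, so every term of $F^*\hat Q$ involving $\mathrm dr$ drops out and $F^*\hat Q = r^{-2}\,\hat\iota^*\hat Q$. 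Finally $\mathrm d\hat\iota_v(u) = u + B(v,u) e_- - B(v,u) e_+$, whence $\hat Q(\mathrm d\hat\iota_v(u)) = Q(u) - B(v,u)^2 + B(v,u)^2 = Q(u)$, i.e.\ $\hat\iota^*\hat Q = Q$. Therefore $\iota^* g = r^{-2} Q$ with smooth positive conformal factor $\Omega^2 = r^{-2}$, which is exactly the conformality condition \eqref{conformal-transf-condition}; combined with the first paragraph, $\iota$ is a conformal embedding.

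For density, the identification $\iota(V) = N(V) \cap \{x_- + x_+ \neq 0\}$ already suffices. The complement $N(V) \setminus \iota(V) = N(V) \cap \{x_- + x_+ = 0\}$ is the zero locus on $N(V)$ of the restriction of the linear functional $x_- + x_+$, and this restriction is not identically zero (it takes the value $1$ at $\hat\iota(0) = \tfrac12(e_- + e_+)$). Since $\hat Q$ has rank $n + 2 \geq 3$, the quadric $N(V)$ is a connected real-analytic manifold, so this zero locus is a proper analytic subset, of dimension $n-1$, hence nowhere dense. Therefore $\iota(V)$ is dense in $N(V)$, i.e.\ $\overline{\iota(V)} = N(V)$.

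I expect the only genuine obstacle to be the conformality step, and within it the need to reconcile the two ways the conformal structure enters: the paper fixes the metric of $N(V)$ through the isometric covering by $S^p \times S^q$, whereas the natural object to pull back along $\hat\iota$ is the ambient form $\hat Q$. Bridging these — via the lift $F = \hat\iota/r$ and the isometric realization $S^p \times S^q \hookrightarrow \mathcal N$ — and then carrying out the short cancellation that produces $\hat\iota^*\hat Q = Q$ is where care is needed; landing on the null cone, injectivity, openness of $\iota(V)$, and density are essentially formal once this is set up.
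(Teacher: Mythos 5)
The paper states this proposition without proof, deferring to its references on the conformal compactification, so there is no in-paper argument to compare against; your proof therefore has to stand on its own, and it does. The chain of reductions is correct: $\hat\iota$ is a diffeomorphism of $V$ onto $\mathcal N\cap\{x_-+x_+=1\}$, hence $\iota$ is a diffeomorphism onto the open chart $N(V)\cap\{x_-+x_+\neq 0\}$; the lift $F=\hat\iota/r$ lands on $S^p\times S^q$ because $\hat Q(\hat\iota(v))=0$ forces the positive and negative parts of $\hat\iota(v)$ to have equal norm $r(v)\geq\tfrac12>0$; the cross terms involving $dr$ in $F^*\hat Q$ vanish because $\hat\iota(v)$ is null and $\hat Q$-orthogonal to its own derivative; and the computation $\hat Q(u+B(v,u)e_--B(v,u)e_+)=Q(u)$ gives $\iota^*g=r^{-2}Q$, which is exactly condition \eqref{conformal-transf-condition}. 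The density argument via the identity theorem for the real-analytic function $x_-+x_+$ on the connected quadric is also sound. The one imprecision is the parenthetical claim that the complement $N(V)\cap\{x_-+x_+=0\}$ has dimension $n-1$: in the definite case the null cone of $Q$ in $V$ is $\{0\}$ and the complement is the single point $[e_--e_+]$. Nothing depends on this, since only properness (hence nowhere density) of the analytic subset is used, but the dimension count should either be dropped or restricted to the indefinite case.
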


The indefinite orthogonal group $\Ogroup(V \oplus \real^{1,1})$ acts on
$V \oplus \real^{1,1}$ by linear transformations.
This action descends to the projective space $P(V \oplus \real^{1,1})$
and preserves $N(V)$.
Thus, for every $T \in \Ogroup(V \oplus \real^{1,1})$, we obtain a
transformation on $N(V)$, which will be denoted by $\psi_T$.

\begin{theorem}
For each $T \in \Ogroup(V \oplus \real^{1,1})$, the map $\psi_T: N(V) \to N(V)$
is a conformal transformation and a diffeomorphism.
If $T, T' \in \Ogroup(V \oplus \real^{1,1})$, then $\psi_T = \psi_{T'}$
if and only if $T' = \pm T$.
\end{theorem}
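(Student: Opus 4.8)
The plan is to establish the three assertions—conformality, diffeomorphism, and the $\pm$-ambiguity—in that order, reducing each to a statement about the linear action of $\Ogroup(V\oplus\real^{1,1})$ on the null cone ${\mathcal N}$ upstairs, and then pushing down through the quotient map $\varpi$.

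\medskip

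First I would reduce everything to the covering $\pi: S^p\times S^q\to N(V)$. Since each $T\in\Ogroup(V\oplus\real^{1,1})$ preserves $\hat Q$, it permutes the null cone ${\mathcal N}$, hence descends to a well-defined bijection $\psi_T$ of $N(V)=\varpi({\mathcal N})$; it is smooth because $\varpi$ is a submersion onto $P(V\oplus\real^{1,1})$ and $T$ is linear, and its inverse $\psi_{T^{-1}}=\psi_T^{-1}$ is smooth by the same argument, so $\psi_T$ is a diffeomorphism. For conformality, recall that the semi-Riemannian metric on $N(V)$ was defined so that $\pi:S^p\times S^q\to N(V)$ is a local isometry, and $S^p\times S^q$ sits inside ${\mathcal N}$ as a section of the $\real_{>0}$-scaling action. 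Given a point $[\xi]\in N(V)$, lift to $\xi_0\in S^p\times S^q$; then $T\xi_0$ lies on some ray meeting $S^p\times S^q$ in a unique point $\xi_1=\lambda(\xi_0)^{-1}T\xi_0$ with $\lambda(\xi_0)>0$ a smooth positive function. The map $\xi_0\mapsto\xi_1$ is the lift of $\psi_T$ through $\pi$, and its differential is $T$ composed with the differential of the rescaling $\eta\mapsto\lambda^{-1}\eta$; since $T$ is an isometry of $\hat Q$ and rescaling by a positive factor multiplies the metric by a positive function, the pulled-back metric on $S^p\times S^q$ is a positive multiple of the original. Transferring through the local isometries $\pi$, this shows $\psi_T^*(\text{metric}) = \Omega^2\cdot(\text{metric})$ on $N(V)$ for a smooth $\Omega>0$, i.e. $\psi_T$ satisfies \eqref{conformal-transf-condition} and is a conformal transformation.

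\medskip

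For the final claim, one direction is immediate: $\psi_{-T}=\psi_T$ because $-\Id$ acts trivially on $P(V\oplus\real^{1,1})$, so $\psi_{T'}=\psi_T$ whenever $T'=\pm T$. Conversely, suppose $\psi_{T'}=\psi_T$, equivalently $\psi_S=\mathrm{id}_{N(V)}$ where $S=T^{-1}T'\in\Ogroup(V\oplus\real^{1,1})$. Then $S$ fixes every line $[\xi]$ with $\hat Q(\xi)=0$, so for each null vector $\xi$ there is a scalar $c(\xi)\neq 0$ with $S\xi=c(\xi)\xi$, i.e. every nonzero null vector is an eigenvector of $S$. The key linear-algebra step is that when $\hat Q$ is non-degenerate of signature $(p+1,q+1)$ with $p+q=n\geq 1$, the null vectors span $V\oplus\real^{1,1}$; indeed one exhibits a basis of null vectors built from the hyperbolic pairs $e_\pm$ and, for each $j$, the null vectors $e_j+e_-$, $e_j+e_+$ (using that $\hat Q(e_j)=\pm1$ is compensated by the $\real^{1,1}$-part). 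Since $S$ has every null vector as an eigenvector and these span, a standard argument (two eigenvectors with distinct eigenvalues cannot have an eigenvector sum) forces all the eigenvalues $c(\xi)$ to coincide, say equal to a single scalar $c$, so $S=c\cdot\Id$. Finally $S\in\Ogroup(V\oplus\real^{1,1})$ gives $c^2\hat Q=\hat Q$, hence $c=\pm1$ and $T'=\pm T$.

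\medskip

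I expect the main obstacle to be the careful verification that the metric on $N(V)$—which is only defined indirectly, by fiat through the covering $\pi$—transforms correctly under $\psi_T$; the subtlety is that $T$ does not preserve $S^p\times S^q$ (only the full cone ${\mathcal N}$), so one must track the smooth positive rescaling factor $\lambda$ that brings $T\xi_0$ back to the section, and confirm it contributes only a conformal factor rather than destroying conformality. The degenerate edge cases (small $n$) in the spanning argument are routine but should be checked so that "null vectors span" is not vacuous.
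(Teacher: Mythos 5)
The paper itself offers no proof of this theorem: it appears in the review section and is quoted from the literature (the references \cite{Schottenloher.CFT, kobayashi1995transformation}), so there is no in-paper argument to compare yours against. Judged on its own, your architecture is sound: descending the linear action through $\varpi$, lifting $\psi_T$ to the section $S^p\times S^q$ of the $\real_{>0}$-action on the null cone, and tracking the rescaling factor $\lambda$ is exactly how one verifies conformality here. The bookkeeping you flag as the main obstacle does work out: the differential of $\xi_0\mapsto\lambda(\xi_0)^{-1}T\xi_0$ is $\lambda^{-1}T$ plus a term proportional to the radial (position) vector at the image point, and that radial vector is both $\hat Q$-isotropic and $\hat B$-orthogonal to the tangent space of $S^p\times S^q$, so the cross terms vanish and the pulled-back metric is exactly $\lambda^{-2}$ times the original.

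The one genuine gap is in the final step. Your justification that all the eigenvalues $c(\xi)$ coincide --- ``two eigenvectors with distinct eigenvalues cannot have an eigenvector sum'' --- does not apply directly to your spanning set, because the sum of two null vectors is generally \emph{not} null: $\hat Q(\xi+\eta)=2\hat B(\xi,\eta)$, so for instance $n_0+n_\infty=e_-$ has $\hat Q=-1$ and is not an eigenvector by hypothesis. The sum trick only works for pairs of $\hat B$-orthogonal null vectors, and chaining through such pairs fails precisely in the Lorentzian case $q=0$ (or $p=0$), where two orthogonal null vectors are necessarily proportional. A clean repair: the eigenvalue function $[\xi]\mapsto c(\xi)$ is well defined and continuous on $N(V)$, takes values in the finite set of real eigenvalues of $S$ whose eigenspaces are closed, and $N(V)$ is connected (it is the quotient of $S^p\times S^q$ by the antipodal map, which is connected even when $p=0$ or $q=0$); hence $c$ is constant, the null vectors all lie in one eigenspace, and since they span you get $S=c\,\Id$ with $c=\pm1$ from $c^2\hat Q=\hat Q$. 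A small additional imprecision: only one of $e_j+e_-$, $e_j+e_+$ is null for each $j$ (according to the sign of $Q(e_j)$), but together with $n_0,n_\infty$ the null ones do span, as you intended.
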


The following proposition essentially asserts that the restrictions of $\psi_T$
to $V$ are compositions of parallel translations, rotations,
dilations and inversions.

\begin{proposition}\label{types}
For each $T \in \Ogroup(V \oplus \real^{1,1})$, the conformal transformation
$\psi_T$ can be
written as a composition of $\psi_{T_j}$, for some
$T_j \in \Ogroup(V \oplus \real^{1,1})$,
$j=1,\dots,k$, such that each $\iota^{-1} \circ \psi_{T_j} \circ \iota$
is one of the following types of transformations:
\begin{enumerate}
\item
  parallel translations $v \mapsto v+b$, where $b \in V$;
\item
  orthogonal linear transformations $v \mapsto Tv$,
  where $T \in \Ogroup(p,q)$;
\item
  dilations $v \mapsto \lambda v$, where $\lambda>0$;
\item
  the inversion $v \mapsto v/Q(v)$.
\end{enumerate}
\end{proposition}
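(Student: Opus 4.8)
The plan is to pass to the ambient space $V \oplus \real^{1,1}$, realize each of the four listed types of transformations by an explicit element of $\Ogroup(V \oplus \real^{1,1})$, show that these explicit elements generate $\Ogroup(V \oplus \real^{1,1})$ modulo its center $\{\pm\Id\}$, and then conclude by noting that $T \mapsto \psi_T$ is a group homomorphism with kernel $\{\pm\Id\}$ (the statement $\psi_T = \psi_{T'} \iff T' = \pm T$ from the preceding theorem, applied with $T' = \Id$, identifies the kernel). Once $T = \pm T_1 \cdots T_k$ with each $T_j$ a generator, we get $\psi_T = \psi_{T_1} \circ \cdots \circ \psi_{T_k}$ with each $\iota^{-1} \circ \psi_{T_j} \circ \iota$ of one of the four required types.

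First I would set up light-cone coordinates: let $f_0 = \tfrac12(e_- + e_+)$ and $f_\infty = e_- - e_+$, which are null vectors spanning a hyperbolic plane complementary to $V$, with $\hat B(f_0, f_\infty) = -1$ and $\hat B(f_0, V) = \hat B(f_\infty, V) = 0$, so that \eqref{conf-embedding} becomes
\[
  \hat\iota(v) = f_0 + v + \tfrac{Q(v)}{2}\, f_\infty,
\]
with $[f_0] = \iota(0)$ and $[f_\infty]$ the "point at infinity". I would then write down four families of elements of $\Ogroup(V \oplus \real^{1,1})$: the map $\tau_b$ fixing $f_\infty$ with $\tau_b f_0 = f_0 + b + \tfrac{Q(b)}{2} f_\infty$ and $\tau_b v = v + B(v,b) f_\infty$; the map $\rho_g$ ($g \in \Ogroup(p,q)$) acting as $g$ on $V$ and fixing $f_0, f_\infty$; the map $\delta_\lambda$ ($\lambda > 0$) fixing $V$ with $\delta_\lambda f_0 = \lambda^{-1} f_0$, $\delta_\lambda f_\infty = \lambda f_\infty$; and the map $\sigma$ fixing $V$ with $\sigma f_0 = \tfrac12 f_\infty$, $\sigma f_\infty = 2 f_0$. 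A routine check using the cross-term vanishing shows each of these preserves $\hat Q$, and that $\iota^{-1} \circ \psi_{\tau_b} \circ \iota$, $\iota^{-1} \circ \psi_{\rho_g} \circ \iota$, $\iota^{-1} \circ \psi_{\delta_\lambda} \circ \iota$, $\iota^{-1} \circ \psi_{\sigma} \circ \iota$ are, respectively, $v \mapsto v + b$, $v \mapsto gv$, $v \mapsto \lambda v$, and $v \mapsto v/Q(v)$ (the last wherever $Q(v) \ne 0$), i.e. exactly the four listed types.

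Now let $G_0 \le \Ogroup(V \oplus \real^{1,1})$ be generated by all the $\tau_b$, $\rho_g$, $\delta_\lambda$, $\sigma$; the heart of the proof is $\{\pm\Id\} \cdot G_0 = \Ogroup(V \oplus \real^{1,1})$, which I would establish in two steps. Step (a): the stabilizer $P$ of the line $[f_\infty]$ lies in $\{\pm\Id\} \cdot G_0$. Given $T \in P$, say $T f_\infty = s f_\infty$, compose with $\delta_{|s|}^{-1}$ and, if $s < 0$, additionally with the element $(-\Id)\,\rho_{-\Id_V} \in \{\pm\Id\}\cdot G_0$ (which fixes $V$ and negates the hyperbolic plane), to arrange that $T$ fixes the vector $f_\infty$; then orthogonality forces $c = 1$ in $T f_0 = c f_0 + w + d f_\infty$ and nullity forces $d = Q(w)/2$, so $T f_0 = \tau_w f_0$, and composing with $\tau_w^{-1}$ makes $T$ fix both $f_0$ and $f_\infty$, hence act as $\rho_g$ with $g = T|_V \in \Ogroup(p,q)$; unwinding, $T \in \{\pm\Id\} \cdot G_0$. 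Step (b): $\langle \tau_b, \sigma \rangle$ already acts transitively on $N(V)$ — every $[\zeta] \in N(V)$ is either $\iota(v) = [f_0 + v + \tfrac{Q(v)}{2} f_\infty]$ (when the $f_0$-component of $\zeta$ is nonzero), which is $\psi_{\tau_v} \psi_\sigma [f_\infty]$, or else $[v + c f_\infty]$ with $Q(v) = 0$ (possibly $v = 0$, giving $[f_\infty]$ itself), and in the latter case $\psi_\sigma \psi_{\tau_v} \psi_\sigma [f_\infty] = [v + \tfrac12 f_\infty]$ by the formula above with $Q(v) = 0$, after which $\psi_{\tau_b}$ changes the $f_\infty$-coefficient to any prescribed value since $b \mapsto B(v,b)$ is onto (for $v \ne 0$). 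Given (a) and (b), for arbitrary $T$ choose $h \in \langle \tau_b, \sigma \rangle$ with $h[f_\infty] = T[f_\infty]$; then $h^{-1}T \in P \subseteq \{\pm\Id\} \cdot G_0$, hence $T \in \{\pm\Id\} \cdot G_0$.

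With that, $T = \pm T_1 \cdots T_k$ with each $T_j$ one of the generators, and applying $T \mapsto \psi_T$ completes the proof as described in the first paragraph. I expect the main obstacle to be the transitivity claim in step (b): one must reach not only $\iota(V)$ but the entire "null cone at infinity" $N(V) \setminus \iota(V)$, which in indefinite signature is a genuine lower-dimensional conformal quadric rather than the single point it collapses to in the positive definite case, and this forces the use of the inversion $\sigma$ \emph{twice}. A secondary, purely bookkeeping nuisance is tracking the several connected components of $\Ogroup(p,q)$ and the sign of the dilation factor in step (a).
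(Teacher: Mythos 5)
Your argument is correct and complete. Note that the paper itself offers no proof of Proposition \ref{types}: it appears in the review section as a standard fact, with the reader referred to the cited expositions (Schottenloher, Kobayashi), so there is no "paper approach" to compare against. What you give is the standard argument, carried out carefully: your light-cone vectors $f_0=\tfrac12(e_-+e_+)$ and $f_\infty=e_--e_+$ agree (up to harmless scaling) with the paper's $n_0$ and $n_\infty$, the four explicit families $\tau_b,\rho_g,\delta_\lambda,\sigma$ are genuinely orthogonal and induce exactly the four listed transformations on $\iota(V)$, and the two-step generation argument is sound: step (a) is the usual Levi decomposition of the stabilizer $P$ of $[f_\infty]$ (orthogonality forcing $c=1$ and nullity forcing $d=Q(w)/2$ are both verified correctly against $\hat B(f_0,f_\infty)=-1$), and step (b) correctly establishes transitivity of $\langle\tau_b,\sigma\rangle$ on all of $N(V)$, including the points $[v+\beta f_\infty]$ with $Q(v)=0$ that constitute the nontrivial "boundary at infinity" in indefinite signature -- the one place where the argument genuinely differs from the positive definite case, and which you rightly single out. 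The only cosmetic caveat, which you already acknowledge, is that $\iota^{-1}\circ\psi_\sigma\circ\iota$ is only defined off the null cone of $V$; this is a feature of the statement itself, not a gap in your proof.
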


When $p+q>2$, these are {\em all} possible conformal transformations of
$N(V)$. In fact, an even stronger result is true.

\begin{theorem}  \label{conformal-continuation-thm}
Let $p+q>2$. Every conformal transformation $\phi :U \to V$,
where $U \subset V$ is any non-empty connected open subset,
can be uniquely extended to $N(V)$, i.e. there exists a unique conformal
diffeomorphism $\hat{\phi} : N(V) \to N(V)$ such that
$\hat{\phi}(\iota(v))=\iota(\phi(v))$
for all $v \in U$.

Moreover, every conformal diffeomorphism $N(V) \to N(V)$
must be of the form $\psi_T$, for some $T \in \Ogroup(V \oplus \real^{1,1})$.
Thus, the group of all conformal transformations $N(V) \to N(V)$
is isomorphic to $\Ogroup(V \oplus \real^{1,1})/\{\pm \Id\}$.
\end{theorem}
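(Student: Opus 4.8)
The plan is to reduce the theorem to \emph{Liouville's theorem} -- the rigidity of conformal maps in dimension $n = p+q > 2$ -- combined with the dictionary between $\Ogroup(V \oplus \real^{1,1})$ and elementary conformal maps recorded in Proposition~\ref{types}. In outline: first show that every conformal $\phi : U \to V$ is the restriction of a globally defined M\"obius transformation; then transport that transformation to $N(V)$ through the embedding $\iota$; and finally settle uniqueness and the global classification by a density-plus-rigidity argument, using that $\overline{\iota(V)} = N(V)$, so that $\iota(V)$ is open and dense.

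\emph{Step 1 (Liouville's theorem).} The analytic core is the claim that any conformal $\phi : U \to V$ on a connected open $U$, with $p + q > 2$, is a composition of the four elementary transformations of Proposition~\ref{types}. I would start from $B(\partial_i \phi, \partial_j \phi) = \Omega^2 B(e_i, e_j)$ in an orthogonal basis and differentiate: forming the standard linear combinations $\partial_k[B(\partial_i\phi,\partial_j\phi)] \pm \partial_i[\cdots] \pm \partial_j[\cdots]$ expresses all second partials of $\phi$ through $\Omega$ and $\nabla\Omega$, and one further differentiation shows that $\nabla \log \Omega$ must be an affine function of $v$. Only the algebraic properties of the symmetric form $B$ are used, so the computation is insensitive to the signature. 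This forces $\phi$ to be either a similarity -- a translation composed with an orthogonal map and a dilation, the case where $\Omega$ is constant -- or such a similarity composed with the inversion $v \mapsto v/Q(v)$. One may instead simply cite this from \cite{Schottenloher.CFT, kobayashi1995transformation}. This rigidity is the principal obstacle; the remaining steps are formal given it together with the results already quoted.

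\emph{Step 2 (extension to $N(V)$).} I would write $\phi = \phi_k \circ \cdots \circ \phi_1$ on $U$ with each $\phi_j$ elementary. Each elementary map is itself of the form $\iota^{-1} \circ \psi_{T_j} \circ \iota$ for an explicit $T_j \in \Ogroup(V \oplus \real^{1,1})$: this is the converse direction of Proposition~\ref{types}, checked directly from formula \eqref{conf-embedding} for $\hat\iota$ -- one takes unipotent elements for the translations, $S \oplus \Id$ for an orthogonal map $S$, a hyperbolic rotation of the $\real^{1,1}$ summand for a dilation, and a reflection of the $\real^{1,1}$ summand for the inversion. Putting $T = T_k \cdots T_1$ and $\hat\phi = \psi_T$, the homomorphism property $\psi_{TT'} = \psi_T \circ \psi_{T'}$, immediate from the definition of $\psi_T$ as the transformation induced by a linear map, gives $\hat\phi(\iota(v)) = \iota(\phi(v))$ for all $v \in U$; and $\hat\phi$ is a conformal diffeomorphism of $N(V)$ by the theorem preceding Proposition~\ref{types}.

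\emph{Step 3 (uniqueness and the global statement).} If $\hat\phi_1, \hat\phi_2$ both extend $\phi$, then $\Psi := \hat\phi_2^{-1} \circ \hat\phi_1$ is a conformal self-diffeomorphism of $N(V)$ fixing the nonempty open set $\iota(U)$ pointwise. By Witt's theorem $\Ogroup(V \oplus \real^{1,1})$ acts transitively on the nonzero $\hat Q$-isotropic vectors, hence transitively on $N(V)$, so every point of $N(V)$ lies in some $\psi_T(\iota(V))$ and therefore has a neighborhood conformally identified with an open subset of $V$. Applying Step 1 in such charts, $\Psi$ is locally M\"obius, hence real-analytic, so -- agreeing with the identity on a nonempty open subset of the connected manifold $N(V)$ -- it is the identity, whence $\hat\phi_1 = \hat\phi_2$. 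For the ``moreover'' part, let $\Phi : N(V) \to N(V)$ be any conformal diffeomorphism; since $\iota(V)$ is open and dense and $\Phi$ is a homeomorphism, $U := \iota^{-1}\bigl(\Phi^{-1}(\iota(V)) \cap \iota(V)\bigr)$ is a nonempty open subset of $V$ on which $\iota^{-1} \circ \Phi \circ \iota$ is conformal, hence extends to some $\psi_T$ by Steps 1--2, and then $\Phi = \psi_T$ on all of $N(V)$ by the uniqueness just proved. Finally the theorem preceding Proposition~\ref{types} gives $\psi_T = \psi_{T'}$ iff $T' = \pm T$, so $T \mapsto \psi_T$ descends to an isomorphism from $\Ogroup(V \oplus \real^{1,1})/\{\pm\Id\}$ onto the group of conformal diffeomorphisms of $N(V)$, completing the proof.
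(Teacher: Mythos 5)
The paper does not prove this theorem: it is stated in the review section as a known result, with the reader referred to \cite{Schottenloher.CFT, kobayashi1995transformation}. Your proposal supplies the standard proof that those references give, so it is not so much a different route as a filled-in one. The structure is sound: the entire mathematical weight sits in Step 1, the pseudo-Euclidean Liouville rigidity theorem, and you correctly flag it as the principal obstacle; the second-derivative computation you sketch does go through for any non-degenerate symmetric $B$ in dimension $n>2$, and Steps 2 and 3 (lifting elementary maps to explicit elements of $\Ogroup(V\oplus\real^{1,1})$, then a transitivity-plus-analytic-continuation argument for uniqueness and the global classification) are routine given Step 1 together with the two results the paper states just before the theorem. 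Two small points of precision. First, the sharp conclusion of Liouville's theorem is that $\phi$ is the restriction of a map of the form (similarity) $\circ$ (inversion) $\circ$ (translation), not merely a similarity post-composed with the standard inversion $v\mapsto v/Q(v)$; your opening sentence of Step 1 (``a composition of the four elementary transformations'') is the correct statement, and it is the one your later steps actually use, so this is only a slip in the parenthetical restatement. Second, in the indefinite case the inversion is singular along an entire null cone rather than at a point, so in Step 1 the decomposition of $\phi$ is first obtained on a possibly smaller open subset of $U$; this costs nothing, since the uniqueness argument of Step 3 (real-analyticity plus connectedness of $N(V)$) propagates the identity $\hat\phi\circ\iota=\iota\circ\phi$ from that smaller set to all of $U$. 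With these caveats the argument is correct.
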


We emphasize that by the {\em group of conformal transformations of $V$}
we mean all conformal transformations $N(V) \to N(V)$, and these are not
required to be orientation preserving.

We conclude this subsection with a classification of points in
the conformal compactification \(N(V)\).
The points in \(N(V)\) are represented by the equivalence classes
\([v + x_- e_- + x_+ e_+]\) in the real projective space
\(P(V \oplus \real^{1,1})\) such that \(Q(v) - (x_-)^2 + (x_+)^2 = 0\),
where \(v \in V\), \(x_-, x_+ \in \real\).
We can distinguish between the following three classes:
\begin{itemize}
\item
\(x_- = \frac12 (1 + Q(v))\) and \(x_+ = \frac12 (1 - Q(v))\).
These points can be identified with vectors in \(V\) through
\eqref{conf-embedding}.
%This class is closed under parallel translations, orthogonal transformations,
%and dilations.
%The subset of points that represents vectors in \(V\) that are not null is
%closed under inversion.

\item
\(Q(v) = 0\), \(x_- = -\frac12\) and \(x_+ = \frac12\).
Such points represent the image of the null vectors \(v \in V\)
(i.e., vectors with \(Q(v)=0\)) under the unique extension to \(N(V)\)
of the inversion map \(v \mapsto v/Q(v)\).

\item
\(Q(v) = x_- = x_+ = 0\) and \(v\neq 0\).
We can think of these points as the limiting points of the lines
generated by the non-zero null vectors \(v \in V\).
\end{itemize}

\subsection{Clifford Algebras and Associated Groups}

For elementary introductions to Clifford algebras see, for example,
\cite{Chevalley.AlgebraicTheorySpinor, Garling.CliffordIntroduction}.

Let $V$ be a real vector space together with a non-degenerate
quadratic form $Q$.
Recall the standard construction of the universal Clifford algebra associated
to $(V,Q)$ as a quotient of the tensor algebra.
We start with the tensor algebra over $V$,
\begin{equation*}
\bigotimes V=\real \oplus V\oplus (V\otimes V)\oplus (V \otimes V \otimes V)
\oplus \dots,
\end{equation*}
consider a set of elements of the tensor algebra
\begin{equation}  \label{ideal}
S=\{v\otimes v+Q(v): v\in V\} \quad \subset \bigotimes V,
\end{equation}
and let $(S)$ denote the ideal of $\bigotimes V$ generated by the elements
of $S$. Then the universal Clifford algebra $\clifford(V)$ associated to
$(V,Q)$ can be defined as a quotient
\begin{equation*}  %\label{quotientdef}
\clifford(V) =\bigotimes V/(S).
\end{equation*}
We note the sign convention in \eqref{ideal} chosen for the generators
of the ideal $(S)$ is not standard in the literature, but it seems to be
more prevalent in the context of Clifford analysis.
We fix an orthogonal basis $\{e_1,e_2,\dots,e_n\}$ of $V$ satisfying
\eqref{V-basis}, and let $e_0 \in \clifford(V)$ denote the multiplicative
identity of the algebra.
Thus $\clifford(V)$ is a finite-dimensional algebra over $\real$ generated by
$e_0,e_1,\dots,e_n$.
We consider subsets $A \subseteq \{1,\dots, n\}$.
If $A$ has $k>0$ elements,
\begin{equation*}
A=\{i_1,i_2,\dots,i_k\}\subseteq \{1,\dots, n\}
\end{equation*}
with $i_1<i_1< \dots <i_k$, define
\begin{equation*}
e_A=e_{i_1i_2\dots i_k}=e_{i_1}\otimes e_{i_2} \otimes \dots \otimes e_{i_k}
\end{equation*}
or, more precisely, $e_A$ is the image of this tensor product in
$\clifford(V)$.
If $A$ is empty, we set $e_{\emptyset}$ be the identity element $e_0$.
These elements
\begin{equation}  \label{Clifford-basis}
\bigl\{e_A: A \subseteq \{1,2,\dots, n\} \bigr\}
\end{equation}
form a vector space basis of $\clifford(V)$ over $\real$.
In particular, $V$ can be identified with the vector subspace of
$\clifford(V)$ spanned by $e_1,e_2,\dots,e_n$, and we frequently do so.
%treat $V$ as a subspace of $\clifford(V)$.

If $u$ and $v$ are elements of $V \subset \clifford(V)$, then
\begin{equation}  \label{xy+yx}
uv+vu = (u+v)^2 - u^2 - v^2 = -Q(u+v) + Q(u) + Q(v) = - 2 B(u,v).
\end{equation}
In particular, if $u$ and $v$ are orthogonal in $V$, $uv+vu=0$.

If \(\mathcal A\) is a unital algebra, by a {\em Clifford map} we mean
an injective linear mapping \(j: V \to {\mathcal A}\) such that
\(1 \notin j(V)\) and \(j(v)^2 = -Q(v) 1\) for all \(v \in V\).
The algebra \(\clifford(V)\) is {\em universal} in the sense that each
such Clifford map \(j: V \to {\mathcal A}\) has a unique extension to
a homomorphism of algebras \(\clifford(V) \to {\mathcal A}\).

There are three important involutions defined on the Clifford algebra
\(\clifford(V)\).
\begin{itemize}
\item
The {\em grade involution} \(\hat{\:}: \clifford(V) \to \clifford(V)\)
is induced by the Clifford map \(V \to \clifford(V)\) defined by
\(x \mapsto -x\). Then, for vectors \(v_1, v_2, \dots, v_k \in V\), we have
        \begin{equation*}
            \Gradeinv{v_1v_2\cdots v_k} = (-1)^k v_1v_2\cdots v_k.
        \end{equation*}
        Thus, in the standard basis \eqref{Clifford-basis},
        the grade involution is given by
        \begin{equation*}
            \Gradeinv{e_A} \mapsto (-1)^{\absval{A}} e_A
        \end{equation*}
        where \(\absval{A}\) is the cardinality of \(A\).

\item
The {\em reversal} \(\reversal{\:} : \clifford(V) \to \clifford(V)\)
is the transpose operation on the level of tensor algebra which descends to
the Clifford algebra because the two sided ideal \((S)\) generated by
\eqref{ideal} is preserved.
The transpose operation is defined on each summand \(V^{\otimes k}\) of the
tensor algebra \(\bigotimes V\) as
\begin{equation*}
v_1 \otimes \cdots \otimes v_k \mapsto v_k \otimes \cdots \otimes v_1.
\end{equation*}
Hence, for vectors \(v_1, \dots, v_k \in V\), we have
\begin{equation*}
\Reversal{v_1 \cdots v_k} = v_k \cdots v_1.
\end{equation*}
In the standard basis \eqref{Clifford-basis}, the reversal is given by
\begin{equation*}
\Reversal{e_A} = (-1)^{\frac{\absval{A}(\absval{A}-1)}{2}} e_A.
\end{equation*}
The reversal is an algebra anti-homomorphism:
\(\Reversal{ab} = \reversal{b} \reversal{a}\).

\item
The {\em Clifford conjugation} is defined as the composition of
the grade involution and the reversal (note that these two involutions commute).
We denote the Clifford conjugation by \(\conj{a} = \gradeinv{\reversal{a}}\).
In the standard basis \eqref{Clifford-basis}, the Clifford conjugation is
given by
\begin{equation*}
\Conj{e_A} = (-1)^{\frac{\absval{A}(\absval{A}+1)}{2}} e_A.
\end{equation*}
Like the reversal, the Clifford conjugation is an algebra anti-homomorphism:
\(\Conj{ab} = \conj{b} \conj{a}\).
\end{itemize}

We frequently use that the grade involution, the reversal,
and the Clifford conjugation
all commute with taking the multiplicative inverse in \(\clifford(V)\):
\begin{equation*}
\Gradeinv{(a^{-1})} = \gradeinv{a}^{-1}, \quad
\Reversal{(a^{-1})} = \reversal{a}^{-1}, \quad
\Conj{(a^{-1})} = \conj{a}^{-1},
\qquad a \in \clifford(V).
\end{equation*}
Also note that a vector \(v\) is invertible as an element of \(\clifford(V)\)
if and only if it is not null, i.e., \(Q(v) \ne 0\), and
\(v^{-1} = -v/Q(v)\).

An essential feature of Clifford algebras is that the orthogonal
transformations on \(V\) can be realized via the so-called twisted
adjoint action by elements of \(\clifford(V)\).
Let \(v \in V\) be a vector that is not null, then every \(x\in V\) splits
into \(x = x^\perp + \lambda v\) with \(x^\perp\in V\) orthogonal to \(v\).
%Explicitly,
%\begin{equation*}
%x^{\perp} = x - \frac{B(x,v)}{Q(v)} v \quad\text{and}\quad \lambda = \frac{B(x,v)}{Q(v)}.
%\end{equation*}
The map \(\sigma_v: \clifford(V) \to \clifford(V)\) defined as
\begin{equation*}
\sigma_v(x) = -v x v^{-1}
\end{equation*}
preserves the space \(V\) and produces a reflection in the direction of \(v\).
Indeed,
\begin{equation*}
\sigma_v(x) = \sigma_v(x^\perp + \lambda v)
= -v x^{\perp}v^{-1} - v (\lambda v) v^{-1}
= x^\perp - \lambda v.
\end{equation*}
Combining such reflections produces various orthogonal transformations on \(V\).

Recall the Cartan-Dieudonn\'e Theorem (for more details see, for example,
\cite{Garling.CliffordIntroduction}).

\begin{theorem}[Cartan-Dieudonn\'e]
Let \(V\) be a vector space with non-degenerate quadratic form,
then every orthogonal linear transformation on \(V\) can be expressed
as the composition of at most \(\dim V\) reflections in the direction of
vectors that are not null.
\end{theorem}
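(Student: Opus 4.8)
The plan is to induct on the integer
$\ell(T) := \dim\bigl((T-\Id)V\bigr)$, the dimension of the image of $T-\Id$.
The preliminary observation is that, since $T$ is orthogonal,
$\mathrm{Fix}(T) = \bigl((T-\Id)V\bigr)^{\perp}$: if $Tw=w$ then for every $y\in V$ one has $B\bigl(w,(T-\Id)y\bigr) = B(w,Ty)-B(w,y) = B(T^{-1}w,y)-B(w,y) = 0$, and these two subspaces have complementary dimensions because $Q$ is non-degenerate; hence $\ell(T) = \dim V - \dim\mathrm{Fix}(T) \le \dim V$, so a bound of $\ell(T)$ reflections would prove the theorem. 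Note also that a reflection $\sigma_v$ automatically uses a non-null direction, as it is defined through $v^{-1}$, which exists only when $Q(v)\neq 0$. The base cases are immediate: if $\ell(T)=0$ then $T=\Id$ is a product of no reflections; if $\ell(T)=1$ then $(T-\Id)V = \real v_0$ and $\mathrm{Fix}(T) = v_0^{\perp}$, and a short computation using $Q(Ty)=Q(y)$ shows that $v_0$ is non-null (otherwise $T=\Id$) and that $Tv_0 = -v_0$, so that $T = \sigma_{v_0}$ is a single reflection.

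For the inductive step I assume $\ell(T)=k\ge 2$ and put $U=(T-\Id)V$, so that $U^{\perp}=\mathrm{Fix}(T)$. The main case is that $U$ contains a non-null vector $u$; choose $z_0$ with $(T-\Id)z_0 = u$. Then $\sigma_u T$ fixes $z_0$: from $B(z_0,u) = B(z_0,Tz_0)-Q(z_0)$ and $Q(u) = Q(Tz_0-z_0) = 2Q(z_0)-2B(z_0,Tz_0)$ we get $2B(z_0,u) = -Q(u)$, hence $\sigma_u(z_0) = z_0 - \tfrac{2B(z_0,u)}{Q(u)}\,u = z_0+u = Tz_0$, and therefore $\sigma_u(Tz_0) = \sigma_u(z_0+u) = (z_0+u)-u = z_0$. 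Moreover $\sigma_u T$ fixes $\mathrm{Fix}(T)$ pointwise, since any $w\in\mathrm{Fix}(T)=U^{\perp}$ is orthogonal to $u$ and so $\sigma_u(Tw)=\sigma_u(w)=w$. As $z_0\notin\mathrm{Fix}(T)$ (because $(T-\Id)z_0 = u\neq 0$), we conclude $\mathrm{Fix}(\sigma_u T)\supsetneq\mathrm{Fix}(T)$, that is $\ell(\sigma_u T)\le k-1$; by the inductive hypothesis $\sigma_u T$ is a product of at most $k-1$ reflections, and hence $T = \sigma_u\,(\sigma_u T)$ is a product of at most $k$.

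The remaining case, where $U=(T-\Id)V$ is totally isotropic, is the genuinely delicate one: then $U\subseteq U^{\perp}=\mathrm{Fix}(T)$, so $T$ fixes $U$ pointwise, yet no non-null $v$ gives a reflection $\sigma_v$ fixing all of $\mathrm{Fix}(T)$ (that would force $v\in U$, hence $Q(v)=0$), so the trick above does not apply and a naive double reflection overshoots the bound by one. I would handle this by picking $0\neq u_0\in U$ and $v_0$ with $B(u_0,v_0)=1$ and $Q(v_0)=0$, so that $H=\mathrm{span}(u_0,v_0)$ is a non-degenerate hyperbolic plane with $V = H\oplus H^{\perp}$ orthogonally; if $T$ preserves $H$ it must act trivially there, and one finishes by an induction on $\dim V$ applied to $T|_{H^{\perp}}$, while if $T$ does not preserve $H$ one peels off one or two further reflections so as to return to the non-degenerate main case, checking carefully that the running total never exceeds $\dim V$. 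This book-keeping in the totally isotropic (equivalently, non-semisimple) case is the only real obstacle, and it is precisely where the bound $\dim V$ is tight; every other step rests only on the identity $uv+vu=-2B(u,v)$ from \eqref{xy+yx} and the explicit formula $\sigma_v(x) = -vxv^{-1}$.
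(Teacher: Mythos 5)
The paper merely recalls this theorem with a citation to Garling and supplies no proof, so there is nothing to match your argument against; judged on its own merits, your induction has a genuine gap, and it sits exactly at the indefinite-signature phenomenon this paper is about. Your local reduction step is correct: for non-null $u=(T-\Id)z_0$ the identity $2B(z_0,u)=-Q(u)$ does give $\sigma_u(z_0)=Tz_0$, and $\sigma_u T$ fixes $\mathrm{Fix}(T)\oplus\real z_0$ pointwise; the base cases are also fine. The problem is the statement you are inducting on. The strengthening ``every orthogonal $T$ is a product of at most $\ell(T)$ reflections'' is \emph{false} for indefinite forms: by Scherk's theorem, when $(T-\Id)V$ is totally isotropic and nonzero the minimal number of reflections is exactly $\ell(T)+2$ (an Eichler transformation of $\real^{2,2}$ with two-dimensional totally isotropic image requires four reflections, not two). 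Hence in your main case the inductive hypothesis ``$\sigma_u T$ is a product of at most $k-1$ reflections'' is not available: $(\sigma_u T-\Id)V$ is a subspace of $U\cap z_0^{\perp}$, and nothing you have said prevents it from being totally isotropic even when $U$ is not; that branch then costs $(k-1)+2=k+1$ reflections and the count escapes the bound $\dim V$. Repairing this requires either Scherk's refinement (when $U$ is not totally isotropic one can \emph{choose} $u$ and $z_0$ so that $(\sigma_u T-\Id)V$ is again not totally isotropic, keeping the whole induction inside the main case) or Artin's induction on $\dim V$ with the determinant-parity trick in the exceptional case. Neither is routine.

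The totally isotropic case is, in addition, only a sketch, and its first branch is vacuous: if $T$ preserved the hyperbolic plane $H=\mathrm{span}(u_0,v_0)$ and acted trivially on it, then $(T-\Id)V$ would lie in $H^{\perp}$, contradicting $0\neq u_0\in(T-\Id)V\cap H$. So everything rests on the unexplained ``peel off one or two further reflections'' step. For what it is worth, the correct bookkeeping is not hard once the non-isotropic case is truly settled: $U\subseteq U^{\perp}$ gives $2\ell(T)\le\dim V$, your own base-case argument shows $\ell(T)=1$ with null image is impossible, so $\ell(T)\ge 2$ and $\ell(T)+2\le\dim V$; then for any non-null $w$ one has $w\notin U$, so some $v\in\mathrm{Fix}(T)=U^{\perp}$ has $B(v,w)\neq 0$ and $(\sigma_w T-\Id)v$ is a nonzero multiple of $w$, whence $(\sigma_w T-\Id)V$ is not totally isotropic and $\ell(\sigma_w T)\le\ell(T)+1$; the non-isotropic case then writes $\sigma_w T$ as at most $\ell(T)+1$ reflections and $T$ as at most $\ell(T)+2\le\dim V$. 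But this finish presupposes the sharp non-isotropic statement, which brings you back to the first gap.
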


By the Cartan-Dieudonn\'e theorem, all orthogonal linear transformations on
\(V\) can be expressed using a twisted adjoint action
\begin{equation*}
    \sigma_a : x \mapsto a x \gradeinv{a}^{-1}
\end{equation*}
where \(a \in \clifford(V)\) is a product of vectors in \(V\) that are not null.

\begin{definition}
Let \(\clifford^\times(V)\) denote the set of invertible elements in
\(\clifford(V)\).
The {\em Lipschitz group}\footnote{Sometimes this group is called Clifford-Lipschitz group or Clifford group.}
\(\lipschitz(V)\) consists of elements in
\(\clifford^{\times}(V)\) that preserve vector space \(V\) under the
twisted adjoint action:
\begin{equation*}
\lipschitz(V) = \set{a \in \clifford^\times(V) ;\:
\sigma_a(x) = a x \gradeinv{a}^{-1} \in V \text{ for all } x \in V}.
\end{equation*}
\end{definition}

The following facts about the Lipschitz group \(\lipschitz(V)\) are well known.
For each \(a\in \lipschitz(V)\), \(\sigma_a\) restricts to a linear
transformation of \(V\) that preserves the quadratic form \(Q(x)\).
Moreover, we have an exact sequence
\begin{equation}  \label{Lipschitz-exact-seq}
  1 \longrightarrow \real^{\times} \longrightarrow \lipschitz(V)
  \xlongrightarrow{\sigma} \Ogroup(V) \longrightarrow 1.
\end{equation}

\begin{definition}
The {\em pin group} \(\Pin(V)\) and the {\em spin group} \(\Spin(V)\)
are defined as
\begin{equation}  \label{pin-def}
\begin{split}
\Pin(V) &= \set{a \in \lipschitz(V) ;\: \conj{a} a = \pm 1}, \\
\Spin(V) &= \set{a \in \lipschitz(V) ;\: \conj{a} a = \pm 1,\ \gradeinv{a} = a}.
\end{split}
\end{equation}
\end{definition}

The pin group and the spin group are double covers of the orthogonal group
and special orthogonal group respectively.
This can be summarized by the exact sequences
\begin{equation*}
  1 \longrightarrow \set{\pm 1} \longrightarrow \Pin(V)
  \xlongrightarrow{\sigma} \Ogroup(V) \longrightarrow 1,
\end{equation*}
\begin{equation*}
  1 \longrightarrow \set{\pm 1} \longrightarrow \Spin(V)
  \xlongrightarrow{\sigma} \SOgroup(V) \longrightarrow 1.
\end{equation*}

\begin{proposition}
The Lipschitz group \(\lipschitz(V)\) is the subgroup of \(\clifford^\times(V)\)
generated by \(\real^\times\) and the invertible vectors in \(V\).
Moreover, every element in \(\lipschitz(V)\) can be expressed as a product
of \(\real^\times\) and at most \(n = \dim V\) invertible vectors.
\end{proposition}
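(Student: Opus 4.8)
The plan is to establish the two inclusions separately, using the Cartan--Dieudonn\'e theorem together with the exact sequence \eqref{Lipschitz-exact-seq}, both of which have been recorded above. Let $G \subseteq \clifford^\times(V)$ be the subgroup generated by $\real^\times$ and the invertible (equivalently, non-null) vectors of $V$. For the inclusion $G \subseteq \lipschitz(V)$, note first that a scalar $\lambda \in \real^\times$ is central and fixed by the grade involution, so $\sigma_\lambda(x) = \lambda x \gradeinv{\lambda}^{-1} = x \in V$; and for a non-null vector $v$ one has $v^{-1} = -v/Q(v)$ and $\sigma_v(x) = v x \gradeinv{v}^{-1} = -v x v^{-1}$ is the reflection in the direction of $v$, which preserves $V$ as shown above. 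Since $\lipschitz(V)$ is a group, every product of such generators lies in it, so $G \subseteq \lipschitz(V)$. It is also worth recording that $G$ consists \emph{exactly} of the elements of the form $\lambda v_1 \cdots v_k$ with $\lambda \in \real^\times$ and $v_1,\dots,v_k$ invertible vectors: this set is visibly closed under multiplication, and closed under inversion because $(\lambda v_1 \cdots v_k)^{-1} = \lambda^{-1} v_k^{-1} \cdots v_1^{-1}$ and each $v_i^{-1} = -v_i/Q(v_i)$ is again a scalar multiple of an invertible vector.

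For the reverse inclusion and the length bound, the key algebraic point is that $\sigma$ is a group homomorphism. Indeed, since the grade involution is an algebra automorphism, $\widehat{ab} = \gradeinv{a}\gradeinv{b}$, and hence $\sigma_{ab}(x) = (ab)x\widehat{ab}^{-1} = a\bigl(bx\gradeinv{b}^{-1}\bigr)\gradeinv{a}^{-1} = \sigma_a\bigl(\sigma_b(x)\bigr)$. Now take an arbitrary $a \in \lipschitz(V)$. By \eqref{Lipschitz-exact-seq} we have $\sigma_a \in \Ogroup(V)$, so the Cartan--Dieudonn\'e theorem lets us write $\sigma_a = \sigma_{v_1} \circ \cdots \circ \sigma_{v_k}$ with $k \le n = \dim V$ and each $v_i \in V$ non-null, hence invertible. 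By the homomorphism property this equals $\sigma_{v_1 \cdots v_k}$, so $b := a^{-1} v_1 \cdots v_k$ lies in $\lipschitz(V)$ (being a product of elements of $\lipschitz(V)$) and satisfies $\sigma_b = \Id$. By exactness of \eqref{Lipschitz-exact-seq} at $\lipschitz(V)$ we have $\ker \sigma = \real^\times$, so $b = \lambda$ for some $\lambda \in \real^\times$, and therefore $a = v_1 \cdots v_k\, \lambda^{-1} = \lambda^{-1} v_1 \cdots v_k$. This shows $a \in G$ and exhibits $a$ as a product of a scalar and at most $n$ invertible vectors, which is exactly the asserted bound.

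The only genuinely nontrivial inputs are the Cartan--Dieudonn\'e theorem and the computation $\ker \sigma = \real^\times$, i.e.\ that an element of $\lipschitz(V)$ acting trivially on $V$ under the twisted adjoint action must already be a scalar; this is the content of \eqref{Lipschitz-exact-seq} and is the step where the real work lies (one would split $a$ into its even and odd parts, deduce that the even part is central while the odd part anticommutes with all of $V$, and then invoke the description of the center of $\clifford(V)$ for non-degenerate $Q$). Since that exact sequence has been cited as known, the remainder of the argument is routine bookkeeping with the homomorphism property of $\sigma$.
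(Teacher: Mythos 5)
Your argument is correct. The paper states this proposition without proof, as one of the "well known" facts reviewed in that subsection; your derivation---combining the Cartan--Dieudonn\'e theorem with the exactness of \eqref{Lipschitz-exact-seq} (in particular $\ker \sigma = \real^\times$) via the homomorphism property of the twisted adjoint action---is precisely the standard route the paper implicitly relies on, and you correctly isolate the computation of $\ker\sigma$ as the only nontrivial input, which the paper likewise takes as given.
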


\begin{corollary}  \label{cor:lipschitz_conjugate}
If \(a \in \clifford(V)\), then \(a\in \lipschitz(V)\) if and only if
\(a\conj{a} \in \real^\times\) and \(a x \reversal{a} \in V\) for all
\(x \in V\).
\end{corollary}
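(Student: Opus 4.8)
The plan is to reduce both directions of the equivalence to one formal identity: if $a \in \clifford^\times(V)$ and $a\conj{a} = c \in \real^\times$, then $\reversal{a} = c\,\gradeinv{a}^{-1}$. To establish it, first note that the three involutions satisfy $\reversal{a} = \gradeinv{\conj{a}}$ (because $\conj{\:} = \gradeinv{\:}\circ\reversal{\:}$ and $\gradeinv{\:}$ is an involution). Now $a\conj{a} = c \in \real^\times$ forces $\conj{a}/c$ to be a two-sided inverse of $a$, since a one-sided inverse in the finite-dimensional algebra $\clifford(V)$ is automatically two-sided; hence $\conj{a} = c a^{-1}$ and therefore
\[
\reversal{a} = \gradeinv{\conj{a}} = \gradeinv{c a^{-1}} = c\,\gradeinv{a}^{-1}.
\]
Multiplying by $x$ on the left, for every $x \in V$ we obtain $a x \reversal{a} = c\,(a x \gradeinv{a}^{-1}) = c\,\sigma_a(x)$. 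Since $c \in \real^\times$, this is the bridge between the two descriptions: once we know $a$ is invertible with $a\conj{a} \in \real^\times$, the condition $a x \reversal{a} \in V$ for all $x$ is equivalent to $\sigma_a(x) \in V$ for all $x$.

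For the forward implication, suppose $a \in \lipschitz(V)$. By the preceding Proposition, $a = \lambda v_1 \cdots v_k$ for some $\lambda \in \real^\times$ and invertible vectors $v_1, \dots, v_k \in V$. Since Clifford conjugation is an anti-homomorphism with $\conj{\lambda} = \lambda$ and $\conj{v_i} = -v_i$, we get $\conj{a} = (-1)^k \lambda\, v_k \cdots v_1$, so $a\conj{a} = (-1)^k \lambda^2 (v_1 \cdots v_k)(v_k \cdots v_1)$; collapsing this palindrome via $v_i v_i = v_i^2 = -Q(v_i)$ from the inside outward yields
\[
a\conj{a} = \lambda^2 \prod_{i=1}^{k} Q(v_i) \in \real^\times,
\]
where each $Q(v_i) \ne 0$ because $v_i$ is invertible. (Equivalently, $b \mapsto b\conj{b}$ is multiplicative on products of elements whose values are scalar, and it sends the generators $\lambda$ and $v$ of $\lipschitz(V)$ to the nonzero scalars $\lambda^2$ and $Q(v)$.) Now $a$ is invertible, $a\conj{a} \in \real^\times$, and $\sigma_a(x) \in V$ by definition of $\lipschitz(V)$, so the bridge identity gives $a x \reversal{a} = (a\conj{a})\,\sigma_a(x) \in V$ for all $x \in V$.

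For the reverse implication, assume $a \in \clifford(V)$ satisfies $a\conj{a} = c \in \real^\times$ and $a x \reversal{a} \in V$ for all $x \in V$. Then $a(\conj{a}/c) = 1$, and since a one-sided inverse is two-sided in a finite-dimensional algebra, $a \in \clifford^\times(V)$ with $a^{-1} = \conj{a}/c$. The bridge identity then applies and shows $\sigma_a(x) = c^{-1}(a x \reversal{a}) \in V$ for all $x$, so $a \in \lipschitz(V)$ by definition. The only points requiring care are the invertibility step in this direction — genuinely extracting a two-sided inverse from $a\conj{a} \in \real^\times$ — and, in the forward direction, keeping track of the signs generated by the anti-homomorphism property when conjugating a product of vectors; everything else is a short formal computation with the bridge identity.
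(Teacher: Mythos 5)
Your proof is correct and follows essentially the same route as the paper's: both directions hinge on the identity $a x \reversal{a} = (a\conj{a})\,a x \gradeinv{a}^{-1}$ (your ``bridge identity,'' which is the paper's equation \eqref{axa-reversal}), combined with the factorization $a=\lambda v_1\cdots v_k$ from the preceding proposition for the forward direction. Your version is if anything slightly cleaner in the converse, where you apply the bridge identity directly instead of the paper's detour through $\gradeinv{a}xa^{-1}=-\Conj{a x \reversal{a}/(a\conj{a})}$, and you are more explicit about why $a\conj{a}\in\real^\times$ yields a two-sided inverse.
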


\begin{proof}
Suppose first \(a\in \lipschitz(V)\), then we can write
\(a = \alpha v_1 v_2 \cdots v_k\) as a product of vectors
\(v_1, \dots, v_k \in V\) that are not null and \(\alpha \in \real^{\times}\).
It is clear that \(a \conj{a} = \alpha^2 Q(v_1)\cdots Q(v_k) \in \real^\times\).
For each \(x\in V\), since \(\conj{a} = a^{-1} (a \conj{a})\), we have
\begin{equation}  \label{axa-reversal}
  a x \reversal{a} = a x \gradeinv{\conj{a}}
  = a x \gradeinv{a}^{-1} (a \conj{a}) \in V.
\end{equation}
Conversely, if \(a\conj{a} \in \real^\times\), then \(a \in \clifford^\times(V)\).
And \(a x \reversal{a} \in V\) for all \(x \in V\) implies that
\begin{equation*}
% a x \gradeinv{a}^{-1} = - \Gradeinv{a x \gradeinv{a}^{-1}} =
\gradeinv{a} x a^{-1}
= - \Conj{\frac{a x \reversal{a}}{a \conj{a}}} \in V
\qquad \text{for all } x \in V.
\end{equation*}
Applying the grade involution, we see that
\(a x \gradeinv{a}^{-1} = \gradeinv{a} x a^{-1} \in V\) for all \(x\in V\).
Thus, \(a \in \lipschitz(V)\).
\end{proof}

\begin{corollary}\label{cor:lipschitz_conjugate_2}
 If \(a \in \lipschitz(V)\), then \(\conj{a} x a \in V\) for all \(x\in V\).
\end{corollary}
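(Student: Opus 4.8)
The plan is to reduce everything to a single reflection using the proposition that describes $\lipschitz(V)$ as generated by $\real^\times$ and the invertible vectors of $V$. Write $a = \alpha\, v_1 v_2 \cdots v_k$ with $\alpha \in \real^\times$ and $v_1,\dots,v_k \in V$ non-null. Since the Clifford conjugation is an algebra anti-homomorphism with $\conj v = -v$ for $v \in V$, we get $\conj a = (-1)^k \alpha\, v_k v_{k-1} \cdots v_1$, hence
\begin{equation*}
\conj a\, x\, a = (-1)^k \alpha^2\, v_k v_{k-1} \cdots v_1\, x\, v_1 v_2 \cdots v_k,
\qquad x \in V,
\end{equation*}
so it suffices to show $v_k \cdots v_1\, x\, v_1 \cdots v_k \in V$.

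The crux is the elementary identity $v x v \in V$ for a single non-null $v \in V$, which I would verify from \eqref{xy+yx}: $vxv = -xv^2 - 2B(v,x)v = Q(v)\, x - 2B(v,x)\, v \in V$ (equivalently $vxv = Q(v)\,\sigma_v(x)$, a scalar multiple of the reflection of $x$ in $v^\perp$). One then peels the vectors off symmetrically: set $x_1 = v_1 x v_1 \in V$, then $x_2 = v_2 x_1 v_2 \in V$, and after $k$ steps $x_k = v_k \cdots v_1\, x\, v_1 \cdots v_k \in V$. This already gives $\conj a x a \in V$; in fact $x \mapsto \conj a x a$ is, up to the nonzero scalar $(-1)^k\alpha^2 Q(v_1)\cdots Q(v_k) = a\reversal a$, the orthogonal transformation $\sigma_{v_k}\circ\cdots\circ\sigma_{v_1}$ of $V$.

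A slicker variant avoids writing out the product: $\conj a$ is again a product of $\real^\times$ and non-null vectors, so $\conj a \in \lipschitz(V)$, and the defining property of the Lipschitz group gives $\conj a\, x\, \gradeinv{\conj a}^{-1} \in V$. Since $\gradeinv{\conj a} = \gradeinv{\gradeinv{\reversal a}} = \reversal a$, this reads $\conj a\, x\, \reversal a^{-1} \in V$; and because $a\reversal a \in \real^\times$ is central, $a = (a\reversal a)\,\reversal a^{-1}$, so $\conj a\, x\, a = (a\reversal a)\cdot(\conj a\, x\, \reversal a^{-1}) \in V$. I would likely present this second argument as the actual proof, being shorter.

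There is no genuine obstacle — the statement lies close to the surface once the generation proposition is available — and the only thing to watch is the bookkeeping with the three involutions. The single fact powering both arguments is that $a\reversal a$ and $a\conj a$ lie in $\real^\times$ (the latter being Corollary~\ref{cor:lipschitz_conjugate}), which lets one trade $a$, $\reversal a$, $\gradeinv a$ and $\conj a$ for one another up to a central scalar.
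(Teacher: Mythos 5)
Your proposal is correct and your first argument is essentially the paper's own proof: the paper also writes \(a = \alpha v_1\cdots v_k\), computes \(\conj{a}\,x\,a = (-1)^k\alpha^2\, v_k\cdots v_1\, x\, \Reversal{v_k\cdots v_1}\), and concludes via Corollary \ref{cor:lipschitz_conjugate} applied to \(v_k\cdots v_1 \in \lipschitz(V)\), where you instead verify the core fact by iterating the elementary identity \(vxv = Q(v)x - 2B(v,x)v\). Both that and your slicker involution-bookkeeping variant are sound.
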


\begin{proof}
Write \(a = \alpha v_1 v_2 \cdots v_k\) as a product of vectors
\(v_1, \dots, v_k \in V\) that are not null and \(\alpha \in \real^{\times}\),
then we have
\begin{equation*}
\conj{a} x a = (-1)^{k} \alpha^2 v_k\cdots v_1 x v_1 \cdots v_k
= (-1)^{k} \alpha^2 v_k\cdots v_1 x \Reversal{v_k\cdots v_1} \in V
\end{equation*}
for all \(x\in V\) because \(v_k\cdots v_1 \in \lipschitz(V)\).
\end{proof}

%Returning to the conformal transformations, observe that
%\(\omega = e_1 e_2 \cdots e_n \)
%(the product of the orthonormal basis vectors) acts on \(V\) via
%\(\sigma_{\omega}\) as the total reflection \(v \mapsto -v\).
%Thus, Proposition \ref{prop:conformal_continuation} can be rephrased as
%follows.
%\begin{proposition}
%When \(\dim V >2\), the group of all conformal diffeomorphisms
%\(N(V) \to N(V)\) is isomorphic to
%\(\Pin(V\oplus \real^{1,1})/\set{\pm 1, \pm \omega}\).
%\end{proposition}

\subsection{Monogenic Functions}
\label{ss:monogenic_function}

In this subsection we review the definitions of the Dirac operator and
monogenic functions in the context of Clifford analysis.
Some comprehensive works on Clifford analysis include
\cite{brackx1982clifford, gilbert1991clifford, delanghe1992clifford}
as well as references therein.

We identify the vector space \(V\) with \(\real^{p,q}\) and introduce
the Dirac operator \(D\) on \(\real^{p,q}\)
\begin{equation}  \label{Dirac-operator}
D = e_1 \pdv{}{x_1} + \cdots + e_p \pdv{}{x_p} - e_{p+1} \pdv{}{x_{p+1}}
- \cdots - e_{p+q} \pdv{}{x_{p+q}}.
\end{equation}
This operator can be applied to functions on the left and on the right.
The choice of signs in \eqref{Dirac-operator} is compatible with
\cite{Bures-Soucek, Budinich1988, Libine_Sandine_2021},
and it is discussed further in Appendix  \ref{appendix}.

If $f$ is a twice-differentiable function on \(\real^{p,q}\) with values in
$\clifford(\real^{p,q})$ or a left $\clifford(\real^{p,q})$-module and
$g$ is a twice-differentiable function on \(\real^{p,q}\) with values in
$\clifford(\real^{p,q})$ or a right $\clifford(\real^{p,q})$-module,
\begin{equation*}
  DDf = -\Box_{p,q}f, \quad
  gDD = -\Box_{p,q}g, \quad \text{where} \quad
  \Box_{p,q} = \sum_{j=1}^p \pdv[order=2]{}{x_j}
  - \sum_{j=p+1}^{p+q} \pdv[order=2]{}{x_j}.
\end{equation*}
%\begin{equation*}
%\begin{split}
%  DDf &= -\pdv[{order=2}]{f}{x_1} - \cdots - \pdv[order=2]{f}{x_p}
%  + \pdv[order=2]{f}{x_{p+1}} + \cdots + \pdv[order=2]{f}{x_{p+q}},  \\
%  gDD &= -\pdv[{order=2}]{g}{x_1} - \cdots - \pdv[order=2]{g}{x_p}
%  + \pdv[order=2]{g}{x_{p+1}} + \cdots + \pdv[order=2]{g}{x_{p+q}}.
%\end{split}
%\end{equation*}
Thus, we can think of the Dirac operator \(D\) as a factor of the wave
operator \(\Box_{p,q}\) on \(\real^{p,q}\).

\begin{definition}
Let $U \subseteq \real^{p,q}$ be an open set, and $\cal M$ a left
$\clifford(\real^{p,q})$-module. A differentiable function $f: U \to {\cal M}$
is called {\em left monogenic} if
\begin{equation*}
D f = e_1 \pdv{f}{x_1} + \cdots + e_p \pdv{f}{x_p} - e_{p+1} \pdv{f}{x_{p+1}}
- \cdots - e_{p+q} \pdv{f}{x_{p+q}} =0
\end{equation*}
at all points in $U$.

Similarly, let ${\cal M}'$ be a right $\clifford(\real^{p,q})$-module, then
a differentiable function $g: U \to {\cal M}'$
is called {\em right monogenic} if
\begin{equation*}
  gD = \pdv{g}{x_1} e_1 + \cdots + \pdv{g}{x_p} e_p - \pdv{g}{x_{p+1}} e_{p+1}
- \cdots - \pdv{g}{x_{p+q}} e_{p+q} =0
\end{equation*}
at all points in $U$.
\end{definition}

We often regard \(\clifford(\real^{p,q})\) itself as a
\(\clifford(\real^{p,q})\)-module and speak of left or right monogenic
functions with values in \(\clifford(\real^{p,q})\).
Modules over \(\clifford(\real^{p,q})\) are well understood since
the seminal paper by Atiyah, Bott and Shapiro \cite{Atiyah-Bott-Shapiro};
they treated modules over complex and real Clifford algebras.
The result is saying that such modules are completely reducible and that
there is a classification of the irreducible ones.
More detailed exposition of this topic can be found in, for example,
\cite{Bures-Soucek, Budinich1988}.
Hence, without loss of generality one can consider functions with values
in one of these irreducible \(\clifford(\real^{p,q})\)-modules.

For the remainder of this subsection we discuss some properties of the Dirac
operator \(D\).

\begin{lemma}  \label{D-basis-independence}
  The Dirac operator \(D\) is independent of the choice of orthogonal
  basis of \(V \simeq \real^{p,q}\) satisfying \eqref{V-basis}.
\end{lemma}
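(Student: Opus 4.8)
The plan is to show that the first-order differential operator $D$ transforms correctly under a change of orthogonal basis. Concretely, suppose $\{e_1,\dots,e_n\}$ and $\{e_1',\dots,e_n'\}$ are two orthogonal bases of $V$, both satisfying \eqref{V-basis}, related by an orthogonal linear transformation $T \in \Ogroup(V)$ via $e_j' = T e_j$. Writing $x = \sum_j x_j e_j = \sum_j x_j' e_j'$ for the coordinates in the two bases, the coordinates are related by the inverse transformation, so $\pdv{}{x_j} = \sum_k T_{kj} \pdv{}{x_k'}$ for the appropriate matrix entries of $T$. The key observation is that the Dirac operator can be written invariantly: if we let $\{e^1,\dots,e^n\}$ denote the reciprocal (dual) basis with respect to $B$, i.e.\ $B(e^i,e_j)=\delta^i_j$, then \eqref{Dirac-operator} reads $D = -\sum_j e^j \pdv{}{x_j}$ (the minus sign and the signature-dependent signs are exactly accounted for by passing from $e_j$ to $e^j$, since $e^j = e_j$ for $1\le j\le p$ and $e^j = -e_j$ for $p+1 \le j \le p+q$). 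Thus I would first rewrite $D$ in this reciprocal-basis form, valid for any orthogonal basis adapted to \eqref{V-basis}.

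The main step is then a direct computation. Under $e_j' = Te_j$, the reciprocal bases satisfy $e'^{\,j} = (T^{-1})^{\tran} e^j$ in the appropriate sense — more precisely, because $T$ preserves $B$, one has $\sum_j (e'^{\,j}) \otimes \pdv{}{x_j'} = \sum_j e^j \otimes \pdv{}{x_j}$ as an element of $V \otimes (\text{vector fields})$, since both pairings compute the same invariant object (the identity endomorphism / the "gradient" coupled against $V$). I would verify this by expanding: $\pdv{}{x_j} = \sum_k B(e_k', e^j)\,\pdv{}{x_k'}$-type relations combined with $\sum_j e^j B(e_j', \cdot) = (\cdot)$ for vectors, using orthogonality of $T$ to collapse the double sum. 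The upshot is that $\sum_j e^j \pdv{}{x_j}$ depends only on $(V,Q)$ and not on the chosen adapted basis, which is exactly the claim.

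I expect the main obstacle to be purely bookkeeping: keeping the signature signs straight when converting between $e_j$ and $e^j$, and being careful that the coordinate functions transform contragrediently to the basis vectors (so that $\partial/\partial x_j$ transforms like a covector, matching $e^j$). A clean way to sidestep index juggling is to introduce, for any vector $v \in V$, the directional derivative $\partial_v f$ at a point, and observe that $Df = -\sum_j e^j \,\partial_{e_j} f$; then invariance follows from the abstract identity $\sum_j e^j \otimes e_j = \sum_j e_j' {}^{\vee} \otimes \dots$ — but since the paper promises an "alternative basis-free way of defining $D$" in the following proposition, I would keep this lemma's proof short and concrete, doing the two-basis computation with the reciprocal-basis reformulation as the one genuine idea, and leaving the fully coordinate-free formulation to the next result.
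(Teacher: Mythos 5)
Your argument is correct in substance, but it takes a different route from the paper's own proof of this lemma. The paper proceeds by a direct matrix computation: writing $D' = \begin{psmallmatrix} e'_1 & \cdots & e'_n\end{psmallmatrix}\eta\,\begin{psmallmatrix}\pdv{}{x'_1} & \cdots & \pdv{}{x'_n}\end{psmallmatrix}^{\tran}$ with $\eta = \mathrm{diag}(1_{p\times p},-1_{q\times q})$, noting that both the basis vectors and the coordinate derivatives transform by the same $T\in\Ogroup(p,q)$, and collapsing $T^{\tran}\eta\,T=\eta$. Your route --- rewriting $D$ via the reciprocal basis $e^j$ and observing that $\sum_j e^j\otimes\pdif{e_j}$ is the image of a canonical, basis-independent element of $V\otimes V$ --- is exactly the content of the paper's Proposition \ref{D-basis-free}, which the authors present as a separate, follow-up result; so you have effectively merged the lemma into the proposition. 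That is a legitimate and arguably cleaner organization (it explains \emph{why} the signs in \eqref{Dirac-operator} are forced, rather than just verifying that they work), at the cost of introducing the dual/reciprocal-basis machinery before it is strictly needed. Two small points to fix: with the convention $B(e^i,e_j)=\delta^i_j$ one has $e^j = Q(e_j)\,e_j$, hence $D = +\sum_j e^j\pdv{}{x_j}$ with no overall minus sign (your extra sign is harmless for the invariance claim but is wrong as a reformulation of \eqref{Dirac-operator}); and the operator $\pdv{}{x_j}$ is the directional derivative $\pdif{e_j}$, which transforms like the \emph{vector} $e_j$ (covariantly), not like a covector --- the invariance comes from pairing it against the contragrediently transforming $e^j$, so you should state the variance the right way around before collapsing the double sum.
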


\begin{proof}
  Let \(\set{e'_1,\dots,e'_n}\) be another basis of \(V\) satisfying
  \eqref{V-basis}, and let \(D'\) be the Dirac operator associated to
  this basis.
%We can rewrite \eqref{Dirac-operator} as
%\begin{equation*}
%D' = \begin{pmatrix} e'_1 & \cdots & e'_n \end{pmatrix}
%        \begin{pmatrix} 1_{p\times p} & 0_{p\times q} \\
%            0_{q\times p} & -1_{q\times q} \end{pmatrix}
%        \begin{pmatrix}\pdv{}{x'_1} \\ \vdots \\ \pdv{}{x'_n} \end{pmatrix}
%\end{equation*}
%and similarly for \(D\).
%\begin{equation*}
%\begin{split}
%    D &= \begin{pmatrix} e_1 & \cdots & e_n \end{pmatrix}
%        \begin{pmatrix} 1_{p\times p} & 0_{p\times q} \\
%            0_{q\times p} & -1_{q\times q}
%        \end{pmatrix}
%        \begin{pmatrix}\pdv{}{x_1} \\ \vdots \\ \pdv{}{x_n} \end{pmatrix},  \\
%D' &= \begin{pmatrix} e'_1 & \cdots & e'_n \end{pmatrix}
%        \begin{pmatrix} 1_{p\times p} & 0_{p\times q} \\
%            0_{q\times p} & -1_{q\times q}
%        \end{pmatrix}
%        \begin{pmatrix}\pdv{}{x'_1} \\ \vdots \\ \pdv{}{x'_n} \end{pmatrix}.
%\end{split}
%\end{equation*}  
There exists a matrix \(T \in \Ogroup(p,q)\) such that
\begin{equation*}
    \begin{pmatrix} e'_1 \\ \vdots \\ e'_n \end{pmatrix}
    = T \begin{pmatrix} e_1 \\ \vdots \\ e_n \end{pmatrix}
\qquad\text{and}\qquad
\begin{pmatrix} \pdv{}{x'_1} \\ \vdots \\ \pdv{}{x'_n} \end{pmatrix}
= T \begin{pmatrix} \pdv{}{x_1} \\ \vdots \\ \pdv{}{x_n} \end{pmatrix}.
\end{equation*}
Then
\begin{equation*}
\begin{split}
D' &= \begin{pmatrix} e'_1 & \cdots & e'_n \end{pmatrix}
        \begin{pmatrix} 1_{p\times p} & 0_{p\times q} \\
            0_{q\times p} & -1_{q\times q}
        \end{pmatrix}
        \begin{pmatrix}\pdv{}{x'_1} \\ \vdots \\ \pdv{}{x'_n} \end{pmatrix}  \\
&= \begin{pmatrix} e_1 & \cdots & e_n \end{pmatrix} T^{\tran}
        \begin{pmatrix} 1_{p\times p} & 0_{p\times q} \\
            0_{q\times p} & -1_{q\times q} \end{pmatrix}
   T \begin{pmatrix}\pdv{}{x_1} \\ \vdots \\ \pdv{}{x_n} \end{pmatrix} \\
&= \begin{pmatrix} e_1 & \cdots & e_n \end{pmatrix}
        \begin{pmatrix} 1_{p\times p} & 0_{p\times q} \\
            0_{q\times p} & -1_{q\times q} \end{pmatrix}
   \begin{pmatrix}\pdv{}{x_1} \\ \vdots \\ \pdv{}{x_n} \end{pmatrix}  \\
&= D.
\end{split}
\end{equation*}
%\begin{equation*}
%    \begin{pmatrix} e'_1 & \cdots & e'_n \end{pmatrix}
%    = \begin{pmatrix} e_1 & \cdots & e_n \end{pmatrix} T.
%\end{equation*}
%For the same matrix \(T\) we have
%\begin{equation*}
%\begin{pmatrix} \pdv{}{x'_1} & \cdots & \pdv{}{x'_n} \end{pmatrix}
%= \begin{pmatrix} \pdv{}{x_1} & \cdots & \pdv{}{x_n} \end{pmatrix} T.
%\end{equation*}
%Then
%\begin{equation*}
%\begin{split}
%D' &= \begin{pmatrix} e'_1 & \cdots & e'_n \end{pmatrix}
%        \begin{pmatrix} 1_{p\times p} & 0_{p\times q} \\
%            0_{q\times p} & -1_{q\times q}
%        \end{pmatrix}
%        \begin{pmatrix}\pdv{}{x'_1} \\ \vdots \\ \pdv{}{x'_n} \end{pmatrix}  \\
%&= \begin{pmatrix} e_1 & \cdots & e_n \end{pmatrix} T
%        \begin{pmatrix} 1_{p\times p} & 0_{p\times q} \\
%            0_{q\times p} & -1_{q\times q} \end{pmatrix}
%  T^{\tran} \begin{pmatrix}\pdv{}{x_1} \\ \vdots \\ \pdv{}{x_n} \end{pmatrix}\\
%&= \begin{pmatrix} e_1 & \cdots & e_n \end{pmatrix}
%        \begin{pmatrix} 1_{p\times p} & 0_{p\times q} \\
%            0_{q\times p} & -1_{q\times q} \end{pmatrix}
%   \begin{pmatrix}\pdv{}{x_1} \\ \vdots \\ \pdv{}{x_n} \end{pmatrix}  \\
%&= D.
%\end{split}
%\end{equation*}
\end{proof}

Furthermore, we can define the Dirac operator in a basis independent fashion.
Let \(V^*\) denote the dual space of \(V\), and treat the non-degenerate
symmetric bilinear form \(B\) on \(V\) as an element of \(V^* \otimes V^*\).
The form \(B\) induces isomorphisms \(V \simeq V^*\) and
\(V \otimes V \simeq V^* \otimes V^*\).
Denote by \(B^*\) the element in \(V \otimes V\) corresponding to \(B\).
Also, let \(V'\) denote the space of all translation-invariant first order
differential operators on \(V\).
Naturally, \(V\) and \( V'\) are isomorphic via
\begin{equation*}
V \ni \: v \longleftrightarrow \pdif{v} \: \in V',
\end{equation*}
where \(\pdif{v}\) denotes the directional derivative in the direction of
vector \(v\).
This isomorphism induces a map
\(d: V \otimes V \simeq V \otimes V' \to {\mathcal D}(V)\),
where \({\mathcal D}(V)\) denotes the space of all \(V\)-valued
translation-invariant first order differential operators on \(V\):
\begin{equation*}
d(u \otimes v) = u \pdif{v}.
\end{equation*}

\begin{proposition}  \label{D-basis-free}
  The image \(d(B^*) \in {\mathcal D}(V)\) is precisely the Dirac operator
  \(D\) defined by \eqref{Dirac-operator}.  
\end{proposition}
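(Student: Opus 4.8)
The plan is to evaluate $d(B^*)$ in the fixed orthogonal basis $\set{e_1,\dots,e_n}$ satisfying \eqref{V-basis} and to check that the result coincides with \eqref{Dirac-operator}. Since $B^*$ and the map $d$ are both manifestly independent of the choice of basis (and in any case $D$ is basis-independent by Lemma~\ref{D-basis-independence}), it suffices to carry out this verification in a single basis. Let $\set{e_1^*,\dots,e_n^*}$ be the dual basis of $V^*$. Because $B(e_i,e_j) = Q(e_i)\,\delta_{ij}$, we may write $B = \sum_{i=1}^n Q(e_i)\, e_i^* \otimes e_i^*$ as an element of $V^* \otimes V^*$.

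Next I would unwind the two musical isomorphisms. The isomorphism $V \simeq V^*$ induced by $B$ sends $v \mapsto B(v,\cdot)$, hence $e_i \mapsto Q(e_i)\, e_i^*$; since $Q(e_i) = \pm 1$, its inverse sends $e_i^* \mapsto Q(e_i)\, e_i$. The isomorphism $V \otimes V \simeq V^* \otimes V^*$ is obtained by applying this map in each tensor slot, and $B^* \in V \otimes V$ is by definition the element mapping to $B$ under it. Applying the inverse slot-by-slot to $B = \sum_i Q(e_i)\, e_i^* \otimes e_i^*$ and using $Q(e_i)^3 = Q(e_i)$, one obtains $B^* = \sum_{i=1}^n Q(e_i)\, e_i \otimes e_i$.

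It then remains to apply $d$. Since $d(u \otimes v) = u\,\pdif{v}$ and the directional derivative $\pdif{e_i}$ is exactly $\pdv{}{x_i}$ in the coordinates dual to the chosen basis, we get
\begin{equation*}
d(B^*) = \sum_{i=1}^n Q(e_i)\, e_i \pdv{}{x_i}
= e_1 \pdv{}{x_1} + \cdots + e_p \pdv{}{x_p} - e_{p+1} \pdv{}{x_{p+1}} - \cdots - e_{p+q} \pdv{}{x_{p+q}} = D,
\end{equation*}
which is precisely \eqref{Dirac-operator}. The one genuinely error-prone step is keeping the direction of the isomorphisms straight: $B^*$ is the \emph{preimage} of $B$ under $V \otimes V \to V^* \otimes V^*$, so one must invert $e_i \mapsto Q(e_i)\, e_i^*$ rather than apply it, and the indefinite-signature signs $Q(e_i) = \pm 1$ have to be carried through the computation (they happen to collapse here because $Q(e_i)^2 = 1$). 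Everything else is routine bookkeeping.
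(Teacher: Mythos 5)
Your proof is correct and follows essentially the same route as the paper: expand $B^*$ in coordinates and apply $d$. The only cosmetic difference is that the paper works with an arbitrary basis and invokes the fact that the matrix $\bigl(B^*(e^*_i,e^*_j)\bigr)$ is the inverse of $\bigl(B(e_i,e_j)\bigr)$ (which also feeds the subsequent corollary \eqref{D-expression}), whereas you specialize at once to the orthogonal basis and unwind the musical isomorphism explicitly -- your sign bookkeeping $Q(e_i)^3=Q(e_i)$ is exactly right.
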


\begin{proof}
  Let \(\set{e_1,\dots,e_n}\) be any basis of \(V\), and let
  \(\set{e^*_1,\dots,e^*_n}\) be the dual basis of \(V^*\), then
  \begin{equation*}
%    B = \sum_{i,j} B(e_i,e_j) e^*_i \otimes e^*_j \in V^* \otimes V^*, \qquad
    B^* = \sum_{i,j} B^*(e^*_i,e^*_j) e_i \otimes e_j \:\in V \otimes V
    \quad \text{and} \quad
    d(B^*) =  \sum_{i,j} B^*(e^*_i,e^*_j) e_i \pdv{}{x_j} \:\in {\mathcal D}(V).
  \end{equation*}
  The matrix with entries \(B^*(e^*_i,e^*_j)\) is the inverse of the matrix
  with entries \(B(e_i,e_j)\).
  If \(\set{e_1,\dots,e_n}\) is an orthogonal basis of \(V\) satisfying
  \eqref{V-basis}, it is clear that \(d(B^*)\) equals
  the Dirac operator \(D\) defined by \eqref{Dirac-operator} relative to
  the basis \(\set{e_1,\dots,e_n}\).
\end{proof}

\begin{corollary}
  For any basis \(\set{e_1,\dots,e_n}\) of \(V\), let \(\B^{-1}\) be the
  inverse of the matrix with entries \(B(e_i,e_j)\), then
  \begin{equation}  \label{D-expression}
D = \begin{pmatrix} e_1 & \cdots & e_n \end{pmatrix} \B^{-1}
\begin{pmatrix}\pdv{}{x_1} \\ \vdots \\ \pdv{}{x_n} \end{pmatrix}
=  \sum_{\lambda,\mu=1}^n e_{\lambda} [\B^{-1}]_{\lambda\mu} \pdv{}{x_{\mu}}.
  \end{equation}
\end{corollary}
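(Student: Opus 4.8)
The plan is to read \eqref{D-expression} off Proposition \ref{D-basis-free}. That proposition identifies $D$ --- which, by Lemma \ref{D-basis-independence}, is a well-defined operator not depending on the orthogonal basis used in \eqref{Dirac-operator} --- with the basis-free operator $d(B^*)$. Its proof already records that in an \emph{arbitrary} basis $\set{e_1,\dots,e_n}$ one has $d(B^*) = \sum_{i,j} B^*(e_i^*,e_j^*)\, e_i\, \pdv{}{x_j}$, where the matrix with entries $B^*(e_i^*,e_j^*)$ is the inverse of the Gram matrix with entries $B(e_i,e_j)$. So the entire content of the corollary is to name that inverse matrix $\B^{-1}$ and to rewrite the double sum as a matrix product; the only point meriting a line of justification is the claim $[B^*(e_i^*,e_j^*)]_{i,j} = \B^{-1}$.

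To see that, I would spell out the $B$-induced isomorphisms in the basis $\set{e_1,\dots,e_n}$. Writing $\B$ for the matrix with entries $B(e_i,e_j)$, the isomorphism $V \to V^*$ sends $e_i \mapsto \sum_j [\B]_{ij}\, e_j^*$, so its inverse $V^* \to V$ sends $e_j^* \mapsto \sum_k [\B^{-1}]_{jk}\, e_k$ (this is just the defining property of the inverse matrix, the symmetry of $\B$ making the two index orders coincide). Transporting $B = \sum_{i,j}[\B]_{ij}\, e_i^*\otimes e_j^*$ across the induced isomorphism $V^*\otimes V^* \to V\otimes V$ and collapsing the resulting products via $\sum_i [\B^{-1}]_{ki}[\B]_{ij} = \delta_{kj}$ yields $B^* = \sum_{k,l}[\B^{-1}]_{kl}\, e_k\otimes e_l$, i.e. $B^*(e_k^*,e_l^*) = [\B^{-1}]_{kl}$, as claimed.

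Finally, since the isomorphism $V\simeq V'$ carries $e_l$ to the directional derivative $\partial_{e_l} = \pdv{}{x_l}$ in the coordinates $x_1,\dots,x_n$ dual to $\set{e_1,\dots,e_n}$, we get $d(e_k\otimes e_l) = e_k\,\pdv{}{x_l}$ and hence $D = d(B^*) = \sum_{k,l} e_k\,[\B^{-1}]_{kl}\,\pdv{}{x_l}$, which is \eqref{D-expression} in both of its forms. There is no real obstacle in the argument; the only step that calls for mild care is the bookkeeping in $B^* = \sum_{k,l}[\B^{-1}]_{kl}\, e_k\otimes e_l$, and even there the symmetry of the Gram matrix keeps things routine.
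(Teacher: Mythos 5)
Your proof is correct and follows the same route the paper intends: the corollary is an immediate consequence of Proposition \ref{D-basis-free}, whose proof already records $d(B^*)=\sum_{i,j}B^*(e_i^*,e_j^*)\,e_i\,\pdv{}{x_j}$ with $[B^*(e_i^*,e_j^*)]_{i,j}=\B^{-1}$. You simply supply the (routine but correct) verification of that last identity, which the paper asserts without detail.
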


Expression \eqref{D-expression} essentially serves as the definition of
the Dirac operator in \cite{Budinich1988};
this definition is standard in physics literature.

\section{Vahlen Matrices}  \label{Vahlen}

We introduce Vahlen matrices, which are certain \(2\times 2\) matrices
with entries in \(\clifford(V)\), and explain the relation between
Vahlen matrices and conformal transformations on \(V\).
The results of this section are well known, especially in the positive
definite case
\cite{Ahlfors.MobiusClifford, Vahlen1902, Maks.CliffordMobius,
  Cnops.VahlenIndefinite} (and many other works).
The last two references are particularly relevant, since they deal with the
indefinite signature case.

\subsection{Algebra Isomorphism
  \(\clifford(V \oplus \real^{1,1}) \simeq \Mat(2, \clifford(V))\)}

We denote by \(\Mat(2, \clifford(V))\) the algebra of \(2\times 2\)
matrices with entries in \(\clifford(V)\).
The key ingredient is the \((1,1)\) periodicity of Clifford algebras
(see, for example,
\cite{delanghe1992clifford, Maks.CliffordMobius, Cnops.VahlenIndefinite}):

\begin{proposition}\label{prop:11_periodicity}
The Clifford algebra \(\clifford(V \oplus \real^{1,1})\) is isomorphic to
the matrix algebra \(\Mat(2, \clifford(V))\).
\end{proposition}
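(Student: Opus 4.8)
The plan is to construct an explicit Clifford map $j \colon V \oplus \real^{1,1} \to \Mat(2, \clifford(V))$ and invoke the universal property of the Clifford algebra together with a dimension count. The target algebra has a natural source of anticommuting square roots of scalars: elements of $V \subset \clifford(V)$ act on the two summands of $\clifford(V)^{\oplus 2}$, and the two "new" basis vectors $e_-, e_+$ must be sent to off-diagonal matrices. Guided by the requirement $j(v)^2 = -\hat Q(v)\,\Id$ and the fact that $\hat Q$ restricts to $Q$ on $V$, to $-1$ on $e_-$, and to $+1$ on $e_+$, the natural choice is
\begin{equation*}
  j(v) = \begin{pmatrix} v & 0 \\ 0 & -v \end{pmatrix} \ \ (v \in V), \qquad
  j(e_-) = \begin{pmatrix} 0 & 1 \\ 1 & 0 \end{pmatrix}, \qquad
  j(e_+) = \begin{pmatrix} 0 & -1 \\ 1 & 0 \end{pmatrix},
\end{equation*}
extended $\real$-linearly. (The sign in the diagonal entry $-v$ is forced by the need for $j(v)$ to anticommute with both $j(e_-)$ and $j(e_+)$, and is exactly what the grade involution records.)

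First I would verify that $j$ is a Clifford map: it is manifestly $\real$-linear and injective, its image avoids the identity matrix (every element of $j(V \oplus \real^{1,1})$ is either block-diagonal with opposite blocks or strictly off-diagonal, never $\Id$), and one checks $j(\xi)^2 = -\hat Q(\xi)\,\Id$ for $\xi = v + x_- e_- + x_+ e_+$ by a direct $2\times 2$ computation: the diagonal part contributes $v^2 = -Q(v)$ on both blocks, the off-diagonal part $x_- j(e_-) + x_+ j(e_+) = \begin{psmallmatrix} 0 & x_- - x_+ \\ x_- + x_+ & 0 \end{psmallmatrix}$ squares to $(x_-^2 - x_+^2)\Id$, and the cross terms $v j(e_\pm) + j(e_\pm) v$ vanish because $\begin{psmallmatrix} v & 0 \\ 0 & -v \end{psmallmatrix}$ anticommutes with any off-diagonal matrix with scalar (i.e.\ $\clifford(V)$-valued but $v$-independent) entries. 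Summing gives $-(Q(v) - x_-^2 + x_+^2)\Id = -\hat Q(\xi)\Id$, as required. It is also worth recording that $j(e_-)$ and $j(e_+)$ anticommute and $j(e_-)j(e_+) = \begin{psmallmatrix} 1 & 0 \\ 0 & -1 \end{psmallmatrix}$.

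By the universal property, $j$ extends uniquely to an algebra homomorphism $\Phi \colon \clifford(V \oplus \real^{1,1}) \to \Mat(2, \clifford(V))$. To conclude that $\Phi$ is an isomorphism, I would first check surjectivity: the image contains $\begin{psmallmatrix} v & 0 \\ 0 & -v \end{psmallmatrix}$ for $v \in V$, hence (multiplying two such, or using $j(e_-)j(e_+)$) it contains $\begin{psmallmatrix} a & 0 \\ 0 & a \end{psmallmatrix}$ for products of pairs $a \in \clifford(V)$ and $\begin{psmallmatrix} b & 0 \\ 0 & -b \end{psmallmatrix}$ for products of an odd number of vectors; together with the off-diagonal generator $j(e_-) = \begin{psmallmatrix} 0 & 1 \\ 1 & 0 \end{psmallmatrix}$ one gets all four matrix units tensored with $\clifford(V)$, so $\Phi$ is onto. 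Since $\dim_\real \clifford(V \oplus \real^{1,1}) = 2^{n+2} = 4 \cdot 2^n = \dim_\real \Mat(2, \clifford(V))$, a surjective linear map between spaces of equal finite dimension is an isomorphism, and we are done.

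The only mildly delicate point --- the "main obstacle," such as it is --- is getting the signs right so that $j$ genuinely lands in the $(1,1)$-signature picture rather than $(2,0)$ or $(0,2)$: one needs $j(e_-)^2 = +\Id$ and $j(e_+)^2 = -\Id$ with $\hat Q(e_-) = -1$, $\hat Q(e_+) = +1$, which is why $e_+$ (the positive-norm direction) goes to the antisymmetric matrix and $e_-$ to the symmetric one. Everything else is bookkeeping. If one prefers a cleaner packaging, the same map can be described as the composition $\clifford(V \oplus \real^{1,1}) \simeq \clifford(V) \,\widehat\otimes\, \clifford(\real^{1,1})$ (the graded tensor product) followed by $\clifford(\real^{1,1}) \simeq \Mat(2,\real)$, but the explicit $j$ above is self-contained and suffices.
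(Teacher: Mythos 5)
Your proposal is correct and follows essentially the same route as the paper: the identical Clifford map $j(v)=\begin{psmallmatrix} v & 0 \\ 0 & -v\end{psmallmatrix}$, $j(e_\pm)=\begin{psmallmatrix} 0 & \mp 1 \\ 1 & 0\end{psmallmatrix}$, the same verification that $j(\xi)^2=-\hat Q(\xi)\,\Id$, and the universal property. The only (immaterial) difference is at the end: the paper exhibits the explicit image of a general element $a+be_-+ce_++de_-e_+$ and reads off bijectivity, while you argue surjectivity via matrix units and finish with a dimension count --- both are fine.
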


\begin{proof}
First, we construct a Clifford map
\(j: V \oplus \real^{1,1} \to \Mat(2, \clifford(V))\) by defining
\begin{equation*}
j(v) = \begin{pmatrix} v & 0  \\ 0 & -v \end{pmatrix}, \quad
j(e_-) = \begin{pmatrix} 0 & 1 \\ 1 & 0 \end{pmatrix}, \quad
j(e_+) = \begin{pmatrix} 0 & -1 \\ 1 & 0 \end{pmatrix}, \qquad
v \in V.
\end{equation*}
%for vectors \(v\in V\).
We need to check that \(j(x)j(x) = -\hat Q(x)\) for all
\(x \in V \oplus \real^{1,1}\).
Writing \(x = v + x_-e_- + x_+e_+\),
\begin{equation*}
j(x) = j(v + x_-e_- + x_+e_+)
= \begin{pmatrix} v & x_- - x_+  \\ x_- + x_+ & -v \end{pmatrix},
\end{equation*}
then
\begin{multline*}
j(x)j(x) = \begin{pmatrix} v & x_- - x_+  \\ x_- + x_+ & -v \end{pmatrix}^2  \\
= \begin{pmatrix} v^2 + (x_-)^2 - (x_+)^2 & 0 \\ 0 & v^2 + (x_-)^2 - (x_+)^2
\end{pmatrix}
= -\hat Q(x),
\end{multline*}
since \(v^2=-Q(v)\) by \eqref{xy+yx}.
This proves that \(j: V \oplus \real^{1,1} \to \Mat(2, \clifford(V))\)
is a Clifford map. By the universal property of Clifford algebras,
this map extends to an algebra homomorphism
\(\hat\jmath: \clifford(V \oplus \real^{1, 1}) \to \Mat(2, \clifford(V))\).

To show that $\hat\jmath$ is an isomorphism, note that for
\(a \in \clifford(V) \subset \clifford(V \oplus \real^{1, 1})\), we have
\begin{equation*}
\hat\jmath(a) = \begin{pmatrix} a & 0 \\ 0 & \gradeinv{a} \end{pmatrix},
\end{equation*}
and, for an arbitrary element
\(a + b e_- + c e_+ + d e_- e_+ \in \clifford(V \oplus \real^{1, 1})\),
where \(a,b,c,d \in \clifford(V)\), we have
\begin{equation}  \label{j-hat}
\begin{split}
\hat\jmath(a + b e_- &+ c e_+ + d e_- e_+)  \\
&=
\begin{pmatrix} a & 0 \\ 0 & \gradeinv{a} \end{pmatrix}
+ \begin{pmatrix} b & 0 \\ 0 & \gradeinv{b} \end{pmatrix}
\begin{pmatrix} 0 & 1 \\ 1 & 0 \end{pmatrix}
+ \begin{pmatrix} c & 0 \\ 0 & \gradeinv{c} \end{pmatrix}
\begin{pmatrix} 0 & -1 \\ 1 & 0 \end{pmatrix}
+ \begin{pmatrix} d & 0 \\ 0 & \gradeinv{d} \end{pmatrix}
\begin{pmatrix} 1 & 0 \\ 0 & -1 \end{pmatrix} \\
&= \begin{pmatrix} a + d & b - c \\
\gradeinv{b} + \gradeinv{c} & \gradeinv{a} - \gradeinv{d} \end{pmatrix}.
\end{split}
\end{equation}
It is clear that \(\hat\jmath\) is a bijection and hence an algebra isomorphism.
\end{proof}

From now on we use isomorphism \(\hat\jmath\) to identify
\(\clifford(V \oplus \real^{1, 1})\) with \(\Mat(2, \clifford(V))\).
The following result can be deduced from equation \eqref{j-hat} describing
this isomorphism.

\begin{lemma}\label{lmm:clifford_conjugate_matrix}
The three involutions of \(\clifford(V \oplus \real^{1, 1})\) in terms of
\(\Mat(2, \clifford(V))\) are given by
\begin{equation*}
\begin{split}
\text{the grade involution:} \qquad
\Gradeinv{\begin{pmatrix} a & b \\ c & d \end{pmatrix}}
&= \begin{pmatrix}\gradeinv{a} & -\gradeinv{b} \\
-\gradeinv{c} & \gradeinv{d} \end{pmatrix}, \\
\text{the reversal:} \qquad
\Reversal{\begin{pmatrix} a & b \\ c & d \end{pmatrix}}
&= \begin{pmatrix} \conj{d} & \conj{b} \\
  \conj{c} & \conj{a} \end{pmatrix}, \\
\text{the Clifford conjugation:} \qquad
\Conj{\begin{pmatrix} a & b \\ c & d \end{pmatrix}}
&= \begin{pmatrix} \reversal{d} & -\reversal{b} \\
 -\reversal{c} & \reversal{a} \end{pmatrix}.
\end{split}
\end{equation*}
\end{lemma}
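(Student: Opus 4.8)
The proof is a direct calculation based on the explicit description \eqref{j-hat} of the isomorphism $\hat\jmath$. Write a generic element of $\clifford(V \oplus \real^{1,1})$ in normal form $z = a + b e_- + c e_+ + d e_- e_+$ with $a,b,c,d \in \clifford(V)$, so that $\hat\jmath(z) = \begin{psmallmatrix} a+d & b-c \\ \gradeinv{b}+\gradeinv{c} & \gradeinv{a}-\gradeinv{d} \end{psmallmatrix}$. The plan is to compute $\Gradeinv{z}$ and $\Reversal{z}$, put them back into normal form, push them through $\hat\jmath$, and finally re-express the resulting matrix entries in terms of the entries of $\hat\jmath(z)$; the Clifford conjugation will then come for free as the composite $\Conj{(\cdot)} = \Gradeinv{\Reversal{(\cdot)}}$ (these two involutions commute).

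First I would record how the three involutions of $\clifford(V \oplus \real^{1,1})$ behave on the pieces of the normal form. Each one restricts on the subalgebra $\clifford(V)$ to the corresponding involution of $\clifford(V)$, while on the two extra generators $\gradeinv{e_\pm} = -e_\pm$, $\reversal{e_\pm} = e_\pm$, and $\gradeinv{(e_-e_+)} = e_-e_+$, $\reversal{(e_-e_+)} = -e_-e_+$ (the latter because $e_-$, $e_+$ anticommute). The one structural fact I will use repeatedly is that, since $V$ is $\hat Q$-orthogonal to $\real^{1,1}$, every $x \in \clifford(V)$ satisfies $e_\pm x = \gradeinv{x}\, e_\pm$ and commutes with $e_-e_+$. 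Combining this with the (anti-)homomorphism property of the involutions, one rewrites $\Gradeinv{z}$ in normal form with coefficients $(\gradeinv{a},-\gradeinv{b},-\gradeinv{c},\gradeinv{d})$, and $\Reversal{z}$ in normal form with coefficients $(\reversal{a},\conj{b},\conj{c},-\reversal{d})$ (the Clifford conjugations appearing because moving $e_\pm$ past $\reversal{b}$, $\reversal{c}$, $\reversal{d}$ applies the grade involution).

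Feeding these coefficients into \eqref{j-hat} and simplifying with $\gradeinv{\gradeinv{x}} = x$ and $\gradeinv{\conj{x}} = \reversal{x}$ gives $\hat\jmath(\Gradeinv{z}) = \begin{psmallmatrix} \gradeinv{a}+\gradeinv{d} & -\gradeinv{b}+\gradeinv{c} \\ -b-c & a-d \end{psmallmatrix}$ and $\hat\jmath(\Reversal{z}) = \begin{psmallmatrix} \reversal{a}-\reversal{d} & \conj{b}-\conj{c} \\ \reversal{b}+\reversal{c} & \conj{a}+\conj{d} \end{psmallmatrix}$. Now setting $A = a+d$, $B = b-c$, $C = \gradeinv{b}+\gradeinv{c}$, $D = \gradeinv{a}-\gradeinv{d}$ for the entries of $\hat\jmath(z)$, linearity of the involutions on $\clifford(V)$ yields $\gradeinv{A} = \gradeinv{a}+\gradeinv{d}$, $\gradeinv{B} = \gradeinv{b}-\gradeinv{c}$, $\gradeinv{C} = b+c$, $\gradeinv{D} = a-d$, and likewise $\conj{D} = \reversal{a}-\reversal{d}$, $\conj{B} = \conj{b}-\conj{c}$, $\conj{C} = \reversal{b}+\reversal{c}$, $\conj{A} = \conj{a}+\conj{d}$; substituting these produces exactly the stated matrices $\begin{psmallmatrix} \gradeinv{A} & -\gradeinv{B} \\ -\gradeinv{C} & \gradeinv{D} \end{psmallmatrix}$ for the grade involution and $\begin{psmallmatrix} \conj{D} & \conj{B} \\ \conj{C} & \conj{A} \end{psmallmatrix}$ for the reversal. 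Applying the grade-involution formula to the reversal matrix and using $\gradeinv{\conj{x}} = \reversal{x}$ then gives that the Clifford conjugation of $\begin{psmallmatrix} A & B \\ C & D \end{psmallmatrix}$ is $\begin{psmallmatrix} \reversal{D} & -\reversal{B} \\ -\reversal{C} & \reversal{A} \end{psmallmatrix}$.

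There is no real obstacle here; the only thing demanding care is the sign bookkeeping produced by commuting $e_-$ and $e_+$ past elements of $\clifford(V)$ (the grade-involution twist), together with correctly inverting the relations $A = a+d$, $B = b-c$, $C = \gradeinv{b}+\gradeinv{c}$, $D = \gradeinv{a}-\gradeinv{d}$ so that the answer is phrased in terms of the matrix entries rather than the coefficients $a,b,c,d$. As a consistency check one verifies that the three resulting maps have the correct algebraic type — the grade-involution map is an algebra automorphism of $\Mat(2,\clifford(V))$ while the reversal and Clifford-conjugation maps are anti-automorphisms — and that the conjugation formula is indeed the composite of the other two.
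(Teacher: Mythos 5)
Your proof is correct and is exactly the deduction the paper has in mind: the paper offers no written proof of this lemma, remarking only that it "can be deduced from equation \eqref{j-hat}," and your argument carries out that deduction in full, with the sign bookkeeping (the relations $e_\pm x = \gradeinv{x}\,e_\pm$, the normal-form coefficients of $\Gradeinv{z}$ and $\Reversal{z}$, and the inversion of $A=a+d$, $B=b-c$, $C=\gradeinv{b}+\gradeinv{c}$, $D=\gradeinv{a}-\gradeinv{d}$) all checking out.
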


\begin{definition}
For a matrix \( A = \begin{psmallmatrix} a & b \\ c & d \end{psmallmatrix}
\in \Mat(2, \clifford(V))\), define its {\em pseudo-determinant} as
    \(\Delta(A) = a\reversal{d} - b\reversal{c}\).
\end{definition}

Observe that, for all \(x \in V \oplus \real^{1,1}\),
\begin{equation}  \label{Q=pseudodeterminant}
\hat Q(x) = \Delta(j(x)).
\end{equation}

\subsection{Vahlen Matrices}

We introduce another key ingredient of this paper -- Vahlen matrices.

\begin{definition}
A matrix \(A \in \Mat(2, \clifford(V))\) is called a {\em Vahlen matrix}
if it is in the Lipschitz group \(\lipschitz(V \oplus \real^{1,1})\).
\end{definition}

Introduce a basis \(\set{n_0, n_\infty}\) for
\(\real^{1,1} \subset \Mat(2, \clifford(V))\):
\begin{equation*}
n_0 = \frac{e_- + e_+}{2}
= \begin{pmatrix} 0 & 0 \\ 1 & 0 \end{pmatrix} \quad\text{and}\quad
n_\infty = \frac{e_- - e_+}{2}
= \begin{pmatrix} 0 & 1 \\ 0 & 0 \end{pmatrix}.
\end{equation*}
Maks \cite{Maks.CliffordMobius} has a very useful characterization
of Vahlen matrices. We supply a proof of his criteria.

\begin{proposition}[\cite{Maks.CliffordMobius}]  \label{prop:Maks_criteria}
  If \(A = \begin{psmallmatrix} a & b \\ c & d \end{psmallmatrix}
  \in \Mat(2, \clifford(V))\), then
  \(A \in \lipschitz(V \oplus \real^{1,1})\) if and only if
  \begin{enumerate}
  \item \(a \conj{a}, \; b\conj{b}, \; c\conj{c}, \; d\conj{d} \in \real\),
  \item \(a\conj{c}, \; b\conj{d} \in V\),
  \item \(av \conj{b} - bv \conj{a}, \; cv \conj{d} - dv \conj{c} \in \real\)
    for all \(v \in V\),
  \item \(av \conj{d} - bv \conj{c} \in V\) for all \(v \in V\),
  \item \(a\reversal{b} = b\reversal{a}\), \(c\reversal{d} = d\reversal{c}\),
    and
  \item the pseudo-determinant
    \(\Delta(A) = a\reversal{d} - b\reversal{c}\) is a nonzero real number.
  \end{enumerate}
\end{proposition}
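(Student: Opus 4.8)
The plan is to apply Corollary~\ref{cor:lipschitz_conjugate} directly to the Clifford algebra $\clifford(V \oplus \real^{1,1})$, which Proposition~\ref{prop:11_periodicity} identifies with $\Mat(2,\clifford(V))$ via $\hat\jmath$. That corollary says $A \in \lipschitz(V \oplus \real^{1,1})$ if and only if $A\Conj{A}$ is a nonzero scalar and $Ax\Reversal{A} \in V \oplus \real^{1,1}$ for every $x \in V \oplus \real^{1,1}$, where ``nonzero scalar'' means an element of $\real^\times$ inside $\clifford(V\oplus\real^{1,1})$, i.e.\ a matrix $\lambda\Id$ with $\lambda \in \real^\times$. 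Under $j$, the subspace $V \oplus \real^{1,1}$ is exactly the set of matrices $\begin{psmallmatrix} v & s \\ t & -v\end{psmallmatrix}$ with $v \in V$, $s,t \in \real$, and this is spanned over $\real$ by the matrices $j(v)$ (for $v \in V$) together with $j(n_0) = \begin{psmallmatrix}0&0\\1&0\end{psmallmatrix}$ and $j(n_\infty) = \begin{psmallmatrix}0&1\\0&0\end{psmallmatrix}$. Since $x \mapsto Ax\Reversal{A}$ is $\real$-linear, it suffices to test the inclusion $Ax\Reversal A \in V\oplus\real^{1,1}$ on these three kinds of elements.

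Next I would substitute the matrix formulas for the involutions from Lemma~\ref{lmm:clifford_conjugate_matrix}, namely $\Reversal{A} = \begin{psmallmatrix}\conj d & \conj b\\ \conj c & \conj a\end{psmallmatrix}$ and $\Conj{A} = \begin{psmallmatrix}\reversal d & -\reversal b\\ -\reversal c & \reversal a\end{psmallmatrix}$, and carry out the $2\times2$ products. One finds $A\Conj A = \begin{psmallmatrix} a\reversal d - b\reversal c & b\reversal a - a\reversal b\\ c\reversal d - d\reversal c & d\reversal a - c\reversal b\end{psmallmatrix}$, whose $(1,1)$ entry is $\Delta(A)$; this is a nonzero scalar matrix exactly when its off-diagonal entries vanish, i.e.\ $a\reversal b = b\reversal a$ and $c\reversal d = d\reversal c$ (condition (5)), and $\Delta(A) \in \real^\times$ (condition (6)). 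No further requirement appears, because $A\Conj A = \Conj{A\Conj A}$ forces the $(2,2)$ entry to be $\Reversal{\Delta(A)}$, which equals $\Delta(A)$ automatically once $\Delta(A)$ is real. Similarly one computes $A\, j(n_0)\,\Reversal A = \begin{psmallmatrix} b\conj d & b\conj b\\ d\conj d & d\conj b\end{psmallmatrix}$, $A\, j(n_\infty)\,\Reversal A = \begin{psmallmatrix} a\conj c & a\conj a\\ c\conj c & c\conj a\end{psmallmatrix}$, and $A\, j(v)\,\Reversal A = \begin{psmallmatrix} av\conj d - bv\conj c & av\conj b - bv\conj a\\ cv\conj d - dv\conj c & cv\conj b - dv\conj a\end{psmallmatrix}$ for $v \in V$.

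The simplification that makes these readable is that in each product the $(2,2)$ entry is the Clifford conjugate of the $(1,1)$ entry, while for a vector $w \in V$ one has $\conj w = -w$; hence whenever the $(1,1)$ entry lies in $V$, the $(2,2)$ entry is automatically its negative, which is precisely the shape required for membership in $V \oplus \real^{1,1}$. Consequently the inclusion $Ax\Reversal A \in V\oplus\real^{1,1}$ for the three spanning families becomes, respectively: $b\conj b, d\conj d \in \real$ and $b\conj d \in V$; then $a\conj a, c\conj c \in \real$ and $a\conj c \in V$; and finally $av\conj b - bv\conj a \in \real$, $cv\conj d - dv\conj c \in \real$, $av\conj d - bv\conj c \in V$ for all $v \in V$. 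These are exactly conditions (1)--(4). Combining this with the analysis of $A\Conj A$ and Corollary~\ref{cor:lipschitz_conjugate} yields the claimed equivalence.

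I do not expect a genuine obstacle here: the argument is essentially bookkeeping with $2\times2$ matrix multiplication. The two points that need care are (i) remembering that the ``$\real^\times$'' appearing in Corollary~\ref{cor:lipschitz_conjugate}, once that corollary is applied to $\clifford(V\oplus\real^{1,1})$, means scalar matrices $\lambda\Id$ with $\lambda \in \real^\times$; and (ii) using the $\Conj{\cdot}$-symmetry of the $(2,2)$ entries, so that the scalar slots $s,t$ and the vector slot $v$ of a general element of $V\oplus\real^{1,1}$ are never simultaneously over-determined -- which is exactly what keeps the final list of conditions down to the six stated.
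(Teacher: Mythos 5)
Your proposal is correct and follows essentially the same route as the paper: both reduce the claim to Corollary~\ref{cor:lipschitz_conjugate} via the identification $\clifford(V\oplus\real^{1,1})\simeq\Mat(2,\clifford(V))$, test $Ax\Reversal{A}$ on the spanning set $V\cup\{n_0,n_\infty\}$ to obtain conditions (1)--(4), and read off (5)--(6) from the computation of $A\Conj{A}$. Your extra remark that the $(2,2)$ entries are determined by the $(1,1)$ entries via the involution symmetry is a point the paper leaves implicit, but it is a correct and welcome clarification rather than a deviation.
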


\begin{proof}
By Corollary \ref{cor:lipschitz_conjugate} and linearity,
it is sufficient to show that this list of conditions is equivalent to
\(A \conj{A} \in \real^\times\) and
\(A x \reversal{A} \in V \oplus \real^{1,1}\) for all
\(x \in V \cup \set{n_0, n_\infty}\).
Direct calculation shows
\begin{equation*}
\begin{split}
A n_0 \reversal{A} &=
\begin{pmatrix} a & b \\ c & d \end{pmatrix}
\begin{pmatrix} 0 & 0 \\ 1 & 0 \end{pmatrix}
\begin{pmatrix} \conj{d} & \conj{b} \\ \conj{c} & \conj{a} \end{pmatrix}
= \begin{pmatrix} b\conj{d} & b\conj{b} \\
  d\conj{d} & d\conj{b} \end{pmatrix}, \\
A n_{\infty} \reversal{A} &=
\begin{pmatrix} a & b \\ c & d \end{pmatrix}
\begin{pmatrix} 0 & 1 \\ 0 & 0 \end{pmatrix}
\begin{pmatrix} \conj{d} & \conj{b} \\ \conj{c} & \conj{a} \end{pmatrix}
= \begin{pmatrix} a\conj{c} & a\conj{a} \\
  c\conj{c} & c \conj{a} \end{pmatrix}, \\
A v \reversal{A} &=
\begin{pmatrix} a & b \\ c & d \end{pmatrix}
\begin{pmatrix} v & 0 \\ 0 & -v \end{pmatrix}
\begin{pmatrix} \conj{d} & \conj{b} \\ \conj{c} & \conj{a} \end{pmatrix}
= \begin{pmatrix} av\conj{d} - bv\conj{c} & av \conj{b} - bv \conj{a} \\
cv \conj{d} - dv \conj{c} & cv\conj{b} - dv \conj{a} \end{pmatrix}.
\end{split}
\end{equation*}
These expressions being in \(V \oplus \real^{1,1}\) for all \(v\in V\)
is equivalent to the first four criteria.
From the calculation
\begin{equation}  \label{det-calculation}
A \conj{A} =
\begin{pmatrix} a & b \\ c & d \end{pmatrix}
\Conj{\begin{pmatrix} a & b \\ c & d \end{pmatrix}}
= \begin{pmatrix} a & b \\ c & d \end{pmatrix}
\begin{pmatrix} \reversal{d} & -\reversal{b} \\
-\reversal{c} & \reversal{a} \end{pmatrix} \\
= \begin{pmatrix} a\reversal{d} - b\reversal{c} &
-a\reversal{b} + b \reversal{a} \\
c\reversal{d} - d \reversal{c} &
-\reversal{c} b + d \reversal{a} \end{pmatrix},
\end{equation}
we see that \(A \conj{A} \in \real^\times\) is equivalent to the last
two criteria.
\end{proof}

Recall the definition of the pin group \eqref{pin-def}.
Computation \eqref{det-calculation} implies the following
description of \(\Pin(V \oplus \real^{1,1})\) and a property of
the pseudo-determinants.

\begin{corollary}
The pin group \(\Pin(V \oplus \real^{1,1})\), as a subset of
\(\Mat(2, \clifford(V))\), consists of Vahlen matrices \(A\) with
the pseudo-determinant \(\Delta(A) = a\reversal{d} - b\reversal{c} = \pm 1\).
\end{corollary}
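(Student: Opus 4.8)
The plan is to extract everything from the matrix computation \eqref{det-calculation} together with the Lipschitz characterization in Corollary \ref{cor:lipschitz_conjugate} and the definition \eqref{pin-def} of the pin group. Recall that a Vahlen matrix is by definition an element of \(\lipschitz(V \oplus \real^{1,1})\), that \(\Pin(V \oplus \real^{1,1}) = \set{A \in \lipschitz(V \oplus \real^{1,1}) ;\: \conj{A}A = \pm 1}\), and that under the identification \(\clifford(V \oplus \real^{1,1}) \simeq \Mat(2,\clifford(V))\) the subalgebra \(\real\) corresponds to the scalar matrices, since \(\hat\jmath(1) = \Id\); in particular \(\conj{A}A = \pm 1\) means \(\conj{A}A = \pm\Id\).

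First I would show that every Vahlen matrix \(A = \begin{psmallmatrix} a & b \\ c & d \end{psmallmatrix}\) satisfies \(A\conj{A} = \Delta(A)\,\Id\) with \(\Delta(A) \in \real^\times\). By Corollary \ref{cor:lipschitz_conjugate}, \(A\conj{A} \in \real^\times\), i.e. \(A\conj{A}\) is a nonzero scalar matrix; comparing with the top-left entry of \(A\conj{A}\) displayed in \eqref{det-calculation} identifies that scalar as the pseudo-determinant \(\Delta(A) = a\reversal{d} - b\reversal{c}\). (As a consistency check, the off-diagonal entries \(-a\reversal{b} + b\reversal{a}\) and \(c\reversal{d} - d\reversal{c}\) appearing in \eqref{det-calculation} then vanish, which is exactly criterion (5) of Proposition \ref{prop:Maks_criteria}.)

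Next I would pass from \(A\conj{A}\) to \(\conj{A}A\), the quantity appearing in the definition of the pin group. Since \(A \in \lipschitz(V \oplus \real^{1,1}) \subset \clifford^\times(V \oplus \real^{1,1})\) it is invertible, and the relation \(A\conj{A} = \Delta(A)\,\Id\) with \(\Delta(A)\) a nonzero central scalar gives \(\conj{A} = \Delta(A)\,A^{-1}\), hence \(\conj{A}A = \Delta(A)\,A^{-1}A = \Delta(A)\,\Id\) as well.

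Finally I would assemble the equivalence. If \(A \in \Pin(V \oplus \real^{1,1})\), then \(A\) is a Vahlen matrix and \(\conj{A}A = \pm\Id\), so by the previous step \(\Delta(A)\,\Id = \pm\Id\), i.e. \(\Delta(A) = \pm 1\). Conversely, if \(A\) is a Vahlen matrix with \(\Delta(A) = \pm 1\), then \(\conj{A}A = \Delta(A)\,\Id = \pm 1\), so \(A \in \Pin(V \oplus \real^{1,1})\). I do not expect a genuine obstacle here: the computation \eqref{det-calculation} does essentially all the work. The only point deserving care is that the pin condition is phrased via \(\conj{A}A\) whereas \eqref{det-calculation} and Corollary \ref{cor:lipschitz_conjugate} naturally produce \(A\conj{A}\), which is exactly why the brief invertibility step above is needed.
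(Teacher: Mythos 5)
Your proof is correct and takes essentially the same route the paper intends: the paper states the corollary as an immediate consequence of the computation \eqref{det-calculation}, and your argument simply spells that out, reading off $A\conj{A} = \Delta(A)\,\Id$ from the displayed matrix and the fact that $A\conj{A}$ is a nonzero real scalar. Your extra care in passing from $A\conj{A}$ to $\conj{A}A$ (via $\conj{A} = \Delta(A)A^{-1}$, using that $\Delta(A)$ is a central real scalar) addresses a detail the paper leaves implicit, and is a worthwhile addition.
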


\begin{corollary}  \label{detAB}
  For Vahlen matrices \(A, B \in \Mat(2, \clifford(V))\), we have
  \(\Delta(AB) = \Delta(A) \Delta(B)\).
\end{corollary}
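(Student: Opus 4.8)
The plan is to deduce the claim from the identity $A\conj{A} = \Delta(A)\,\Id$, valid for every Vahlen matrix $A$, combined with the fact that Clifford conjugation is an algebra anti-homomorphism.

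First I would establish that identity. Let $A$ be a Vahlen matrix, so $A \in \lipschitz(V \oplus \real^{1,1})$. By Corollary~\ref{cor:lipschitz_conjugate} we have $A\conj{A} \in \real^\times$; under the isomorphism $\hat\jmath$ the subalgebra $\real \subset \clifford(V \oplus \real^{1,1})$ corresponds to the scalar matrices, since $\hat\jmath(\lambda e_0) = \lambda\,\Id$, so $A\conj{A} = \lambda\,\Id$ for some $\lambda \in \real^\times$. Reading off the $(1,1)$-entry of the matrix computed in \eqref{det-calculation} identifies this scalar: $\lambda = a\reversal{d} - b\reversal{c} = \Delta(A)$. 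Hence $A\conj{A} = \Delta(A)\,\Id$, and likewise $B\conj{B} = \Delta(B)\,\Id$.

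Next, since $\lipschitz(V \oplus \real^{1,1})$ is a group, the product $AB$ is again a Vahlen matrix, so the same identity gives $(AB)\,\Conj{(AB)} = \Delta(AB)\,\Id$. On the other hand, Clifford conjugation is an anti-homomorphism, so $\Conj{(AB)} = \conj{B}\,\conj{A}$, and therefore
\[
  \Delta(AB)\,\Id = (AB)\,\conj{B}\,\conj{A} = A\,(B\conj{B})\,\conj{A}
  = A\,(\Delta(B)\,\Id)\,\conj{A} = \Delta(B)\,(A\conj{A}) = \Delta(B)\,\Delta(A)\,\Id,
\]
where I used that the real scalar $\Delta(B)$ lies in the center of $\Mat(2, \clifford(V))$. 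Comparing the scalars on both sides yields $\Delta(AB) = \Delta(A)\,\Delta(B)$.

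There is no real obstacle in this argument; the only step needing a moment's care is upgrading $A\conj{A} \in \real^\times$ (from Corollary~\ref{cor:lipschitz_conjugate}) to the precise equality $A\conj{A} = \Delta(A)\,\Id$, which is handled by the explicit computation \eqref{det-calculation}: for a Vahlen matrix the off-diagonal entries there vanish by condition~5 of Maks' criteria (Proposition~\ref{prop:Maks_criteria}), so the matrix is automatically scalar with $(1,1)$-entry $\Delta(A)$. Everything else is formal manipulation using the anti-homomorphism property of Clifford conjugation and the centrality of real scalars.
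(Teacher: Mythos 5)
Your proposal is correct and follows essentially the same route as the paper: the paper's one-line proof is exactly the chain $\Delta(A)\Delta(B) = A\conj{A}\,B\conj{B} = A(B\conj{B})\conj{A} = AB\,\Conj{AB} = \Delta(AB)$, resting on the identification $A\conj{A} = \Delta(A)$ as a real scalar and the anti-homomorphism property of Clifford conjugation. You simply spell out more explicitly the step the paper leaves implicit, namely that the scalar $A\conj{A}$ equals the $(1,1)$-entry $a\reversal{d}-b\reversal{c}$ of \eqref{det-calculation}.
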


\begin{proof} Since \(A\) and \(B\) are Vahlen matrices,
\begin{equation*}
\Delta(A) \Delta(B) = A \conj{A} B \conj{B} = A (B \conj{B}) \conj{A}
 = AB \Conj{AB} = \Delta(AB).
\end{equation*}
\end{proof}

At this point, we would like to mention that Cnops \cite{Cnops.VahlenIndefinite}
has a more refined set of criteria for a matrix
\(A \in \Mat(2, \clifford(V))\) to be a Vahlen matrix.
%in the Lipschitz group \(\lipschitz(V \oplus \real^{1,1})\).
His criteria reduce to Ahlfors' criteria given in \cite{Ahlfors.MobiusClifford}
when \(V\) has positive definite signature.
(However, Cnops uses the sign convention \(v^2=Q(v)\) in his definition of
a Clifford algebra.)

\subsection{Conformal Space}

In order to relate Vahlen matrices to conformal transformations,
we observe that the embedding \eqref{conf-embedding} can be rewritten as
\begin{equation}\label{eq:conformal_embedding_matrix}
(j \circ \hat\iota)(v)
= \begin{pmatrix} v & -v^2 \\ 1 & -v \end{pmatrix}
= \begin{pmatrix} v \\ 1 \end{pmatrix}
\begin{pmatrix} 1 & -v \end{pmatrix}.
\end{equation}
This particular presentation encourages us to reinterpret the
twisted adjoint action \(\sigma\) of the Lipschitz group as an action on
a spinor-like object with two components.
We describe a construction of the conformal space due to
\Textcite{Maks.CliffordMobius} that makes this idea precise.

\begin{definition}
The {\em pre-conformal space} \(W_{\mathrm{pre}}\) is the set of products
\(\set{A e}\), where
\begin{equation*}
e = \frac{1 + e_- e_+}{2} =  \begin{pmatrix} 1 & 0 \\ 0 & 0 \end{pmatrix}
\end{equation*}
and \(A \in \Mat(2, \clifford(V))\) ranges over all Vahlen matrices.

The group of Vahlen matrices \(\lipschitz(V \oplus \real^{1,1})\) acts on
\(W_{\mathrm{pre}}\) by multiplication on the left.
\end{definition}

We see that any element in the pre-conformal space \(W_{\mathrm{pre}}\)
must have a matrix realization
\begin{equation}\label{eq:pre-conformal_matrix}
\begin{pmatrix} x & 0 \\ y & 0 \end{pmatrix}
\end{equation}
with entries \(x, y \in \clifford(V)\).
And since these entries form the first column of a Vahlen matrix,
by Proposition \ref{prop:Maks_criteria}, they satisfy
\(x\conj{x}, \: y\conj{y} \in \real\) and \(x\conj{y} \in V\).
Simplifying notations, we drop the right column and write \((x,y)\) or
\(\begin{psmallmatrix} x \\ y \end{psmallmatrix}\) for an element of
\(W_{\mathrm{pre}}\) represented by \eqref{eq:pre-conformal_matrix}.

In order to map the pre-conformal space \(W_{\mathrm{pre}}\) into the null cone
\(\mathcal{N}\) of \(V\oplus \real^{1,1}\) (recall equation \eqref{null-cone})
in a way that is compatible with \eqref{eq:conformal_embedding_matrix},
we introduce a map \(\gamma : W_{\mathrm{pre}} \to \Mat(2, \clifford(V))\)
\begin{equation}\label{eq:pre-conformal_to_null}
\gamma(Ae) = A e n_\infty \Reversal{A e} = A n_\infty \reversal{A}.
\end{equation}
Equivalently, we can use Lemma \ref{lmm:clifford_conjugate_matrix} and
express \eqref{eq:pre-conformal_to_null} in matrix form
\begin{equation}\label{eq:pre-conformal_to_null_matrix}
\gamma \begin{pmatrix} x \\ y \end{pmatrix}
= \begin{pmatrix} x & 0 \\ y & 0 \end{pmatrix}
\begin{pmatrix} 0 & 1 \\ 0 & 0 \end{pmatrix}
\begin{pmatrix} 0 & 0 \\ \conj{y} & \conj{x} \end{pmatrix}
= \begin{pmatrix} x\conj{y} & x\conj{x} \\
y\conj{y} & y\conj{x} \end{pmatrix}
= \begin{pmatrix} x\conj{y} & x\conj{x} \\
y\conj{y} & -x\conj{y} \end{pmatrix}.
\end{equation}
From this expression and equation \eqref{Q=pseudodeterminant}, we see that
\(\gamma\) takes values in the null cone \(\mathcal{N}\) of
\(V \oplus \real^{1,1}\).
%This map \(\gamma\) will be used to construct a bijection between the
%forthcoming conformal space \(W\) and the conformal compactification
%\(N(V)\) of \(V\).

For convenience we state the following result (see, for example,
\cite{Lam.IntroQuadraticForms}).

\begin{lemma}[Witt's Extension Theorem]
Let \( \mathcal V \) be a finite-dimensional real vector space
together with a non-degenerate symmetric bilinear form.
If \(\phi : {\mathcal V_1} \to {\mathcal V_2}\) is an isometric isomorphism
of two subspaces \({\mathcal V}_1, {\mathcal V}_2 \subset {\mathcal V}\),
then \(\phi\) extends to an isometric isomorphism
\(\hat{\phi} : {\mathcal V} \to {\mathcal V}\).
\end{lemma}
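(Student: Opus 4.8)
The plan is to reproduce the classical proof of Witt's extension theorem in two phases: first dispose of the case in which the subspace \(\mathcal V_1\) (equivalently \(\mathcal V_2\)) is non-degenerate, and then reduce the general case to it by ``completing the radical to a hyperbolic subspace.'' Throughout I write \(Q(u) = B(u,u)\) and use, for an anisotropic vector \(a\), the reflection \(\tau_a \colon x \mapsto x - \tfrac{2B(x,\,a)}{Q(a)}\,a\), which is readily checked to be a global isometry.

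For the non-degenerate case I would induct on \(k = \dim \mathcal V_1\), with the inductive hypothesis quantified over all ambient spaces. When \(k = 1\), write \(\mathcal V_1 = \real v\) with \(Q(v) \ne 0\), set \(w = \phi(v)\) so that \(Q(w) = Q(v)\), and note that \(Q(v-w) + Q(v+w) = 2Q(v) + 2Q(w) = 4Q(v) \ne 0\); hence at least one of \(v-w\), \(v+w\) is anisotropic. If \(Q(v-w) \ne 0\), then \(\tau_{v-w}\) carries \(v\) to \(w\) (a one-line check, using \(Q(v-w) = 2B(v,\,v-w)\), which follows from \(Q(w) = Q(v)\)); if \(Q(v-w) = 0\), then necessarily \(Q(v+w) \ne 0\) and \(\tau_w \circ \tau_{v+w}\) sends \(v \mapsto -w \mapsto w\). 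For the inductive step I would pick an anisotropic \(v \in \mathcal V_1\) (which exists because \(\mathcal V_1\) is non-degenerate), use the \(k=1\) case to obtain a global isometry \(\rho\) with \(\rho(v) = \phi(v)\), and observe that \(\rho^{-1} \circ \phi\) fixes \(v\) and maps the \((k-1)\)-dimensional non-degenerate subspace \(\mathcal V_1 \cap v^\perp\) isometrically into the non-degenerate space \(v^\perp\). By the inductive hypothesis this restriction extends to an isometry \(\psi\) of \(v^\perp\), and then \(\rho \circ (\mathrm{id}_{\real v} \oplus \psi)\) extends \(\phi\) to all of \(\mathcal V\).

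For the general case, let \(\mathcal R_i = \mathcal V_i \cap \mathcal V_i^\perp\) be the radical of \(\mathcal V_i\); since \(\phi\) is an isometric isomorphism it maps \(\mathcal R_1\) onto \(\mathcal R_2\). Fix a non-degenerate complement \(\mathcal U_1\) with \(\mathcal V_1 = \mathcal R_1 \oplus \mathcal U_1\) and set \(\mathcal U_2 = \phi(\mathcal U_1)\), a non-degenerate complement to \(\mathcal R_2\) in \(\mathcal V_2\). Now \(\mathcal R_i\) is totally isotropic and, since \(\mathcal V_i^\perp \subseteq \mathcal U_i^\perp\), it is contained in the non-degenerate space \(\mathcal U_i^\perp\). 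Using non-degeneracy of \(B\) on \(\mathcal U_i^\perp\), I would enlarge \(\mathcal R_i\) to a hyperbolic subspace \(\mathcal H_i = \mathcal R_i \oplus \mathcal R_i' \subseteq \mathcal U_i^\perp\) of dimension \(2r\), where \(r = \dim \mathcal R_i\): choose \(u_1, \dots, u_r \in \mathcal U_i^\perp\) dual to a basis \(z_1, \dots, z_r\) of \(\mathcal R_i\), then replace them by the isotropic vectors \(z_j' = u_j - \tfrac12 \sum_k B(u_j, u_k)\, z_k\), which still satisfy \(B(z_i, z_j') = \delta_{ij}\) while \(B(z_i', z_j') = 0\), so that the Gram matrix of \((z_1,\dots,z_r,z_1',\dots,z_r')\) is invertible. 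Taking the basis of \(\mathcal R_2\) to be \(\phi(z_1), \dots, \phi(z_r)\), the linear map sending \(z_j \mapsto \phi(z_j)\), sending \(z_1', \dots, z_r'\) to the analogously constructed isotropic basis of \(\mathcal R_2'\), and agreeing with \(\phi\) on \(\mathcal U_1\), is an isometric isomorphism \(\tilde\phi \colon \mathcal H_1 \oplus \mathcal U_1 \to \mathcal H_2 \oplus \mathcal U_2\) extending \(\phi\), since it preserves Gram matrices. The domain \(\mathcal H_1 \oplus \mathcal U_1\) is now non-degenerate (an orthogonal direct sum of non-degenerate spaces), so the first phase extends \(\tilde\phi\) --- hence also \(\phi\) --- to a global isometry \(\hat\phi\) of \(\mathcal V\).

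The routine verifications (that the \(\tau_a\) preserve \(B\); that \(v^\perp\) and \(\mathcal U_i^\perp\) are non-degenerate; the Gram-matrix bookkeeping for \(\tilde\phi\)) are all standard, and the reflection step is elementary once one spots the identity \(Q(v\mp w) = 2B(v,\,v\mp w)\). I expect the only genuinely delicate point to be the general-case reduction: one must arrange the hyperbolic completion \(\mathcal H_i\) inside \(\mathcal U_i^\perp\) so that it is simultaneously compatible with the already-defined \(\phi\) and makes the enlarged domain non-degenerate; once that is done, everything follows from the non-degenerate case. (As the statement indicates, this result is entirely classical --- see \cite{Lam.IntroQuadraticForms} --- and I would include the sketch only for completeness.)
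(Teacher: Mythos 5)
The paper does not prove this lemma at all: it is quoted as a standard result with a pointer to the literature (Lam's book on quadratic forms), so there is no "paper proof" to compare against. Your sketch is the classical two-phase argument — reflections $\tau_a$ plus induction over the ambient space for the non-degenerate case, then hyperbolic completion of the radical inside $\mathcal U_i^\perp$ to reduce the degenerate case — and all the key identities you invoke (e.g.\ $Q(v\mp w)=2B(v,v\mp w)$ when $Q(v)=Q(w)$, and the Gram-matrix computation for the isotropic vectors $z_j'$) check out. This is exactly the textbook proof the paper is implicitly deferring to, so it is correct, just far more than the paper itself supplies.
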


Recall that \(\varpi: V \oplus \real^{1,1} \setminus \{0\} \twoheadrightarrow
P(V \oplus \real^{1,1})\) is the quotient map.

\begin{lemma}\label{lmm:gamma_surjective}
The map \(\varpi \circ \gamma : W_{\mathrm{pre}} \to N(V)\),
which by abuse of notation we also denote by \(\gamma\), is surjective.
\end{lemma}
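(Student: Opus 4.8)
The plan is to show that every point of $N(V)$ lies in the image of $\gamma = \varpi \circ \gamma$ by exploiting the three-class classification of points of $N(V)$ given at the end of Section~\ref{review}, together with the fact that the Vahlen group $\lipschitz(V \oplus \real^{1,1})$ acts on $W_{\mathrm{pre}}$ on the left and $\gamma$ is equivariant: from \eqref{eq:pre-conformal_to_null} one has $\gamma(BAe) = B\gamma(Ae)\reversal{B}$, so the image of $\gamma$ is a union of $\lipschitz(V \oplus \real^{1,1})$-orbits on the null cone $\mathcal N$ (before projectivizing). Since the action of $\Ogroup(V \oplus \real^{1,1})$ on $N(V)$ is essentially transitive (the group acts transitively on the quadric, because any two nonzero null vectors can be carried to one another by Witt's Extension Theorem applied to the isometry sending one null line to the other, after matching up a hyperbolic partner), and since $\sigma(\lipschitz(V \oplus \real^{1,1}))$ together with $\pm\Id$ covers all of $\Ogroup(V \oplus \real^{1,1})$ by the exact sequence \eqref{Lipschitz-exact-seq}, it will suffice to produce a single point in the image of $\gamma$ and then spread it around by the group action.

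Concretely, first I would observe that for $v \in V$ the element $\begin{psmallmatrix} v \\ 1 \end{psmallmatrix}$ lies in $W_{\mathrm{pre}}$: indeed $\begin{psmallmatrix} v & -v^2 \\ 1 & -v \end{psmallmatrix} = (j\circ\hat\iota)(v)$ is $j$ of a vector on the null cone, hence a Vahlen matrix (it is in $\Pin(V\oplus\real^{1,1})$ up to scaling when $Q(v)\neq 0$, and one checks directly it lies in $\lipschitz(V\oplus\real^{1,1})$ in general), and $Ae$ for this $A$ has first column $\begin{psmallmatrix} v \\ 1 \end{psmallmatrix}$. Then by \eqref{eq:pre-conformal_to_null_matrix},
\begin{equation*}
\gamma\begin{pmatrix} v \\ 1 \end{pmatrix}
= \begin{pmatrix} x\conj y & x\conj x \\ y\conj y & -x\conj y \end{pmatrix}
\Big|_{x=v,\,y=1}
= \begin{pmatrix} \conj v & -Q(v) \\ 1 & -\conj v \end{pmatrix},
\end{equation*}
which under $j^{-1}$ is the null vector $v + \tfrac{1+Q(v)}{2} e_- + \tfrac{1-Q(v)}{2} e_+ = \hat\iota(v)$, i.e.\ exactly the point $\iota(v) \in N(V)$. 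Thus all points of the first class (vectors in $V$) are hit. For the element $\begin{psmallmatrix} 1 \\ 0 \end{psmallmatrix}$ — which lies in $W_{\mathrm{pre}}$ since $\begin{psmallmatrix}1 & 0 \\ 0 & -1\end{psmallmatrix}$ is $j(e_-e_+)$, a Vahlen matrix — the map $\gamma$ gives $\begin{psmallmatrix} 0 & 1 \\ 0 & 0 \end{psmallmatrix} = n_\infty$, the point "at infinity", which covers the remaining points of $N(V)$ once we act by the group.

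The remaining task is to upgrade these sample points to surjectivity. Here I would argue: given an arbitrary $[\xi] \in N(V)$, pick a representative $\xi \in \mathcal N$; by transitivity of $\Ogroup(V\oplus\real^{1,1})$ on $\mathcal N$ up to scaling, and by \eqref{Lipschitz-exact-seq} (every element of $\Ogroup(V\oplus\real^{1,1})$ equals $\sigma_B$ for some $B \in \lipschitz(V\oplus\real^{1,1})$), there is $B$ with $\sigma_B(n_\infty) = B n_\infty \gradeinv B^{-1}$ proportional to $\xi$; but $\gamma(Bn_0\text{-type element})$ — more precisely $\gamma(B e)$, whose value is $B n_\infty \reversal B$, which differs from $B n_\infty \gradeinv B^{-1}$ only by the central factor $B\conj B \in \real^\times$ as in \eqref{axa-reversal} — is then proportional to $\xi$, so $[\gamma(Be)] = [\xi]$. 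The one point requiring care is checking $Be \in W_{\mathrm{pre}}$ for the relevant $B$, i.e.\ that left-multiplication by a Vahlen matrix keeps us in the pre-conformal space; this is immediate from the definition of $W_{\mathrm{pre}} = \set{Ae}$ as all products of Vahlen matrices with $e$, since the Vahlen group is a group. I expect the main obstacle to be the bookkeeping around which orbit representative to use and verifying the projective equivalence $[Bn_\infty\reversal B] = [Bn_\infty\gradeinv B^{-1}]$ cleanly; everything else is either a direct matrix computation or an invocation of Witt's theorem and the exact sequence \eqref{Lipschitz-exact-seq}.
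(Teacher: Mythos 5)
Your closing paragraph is exactly the paper's proof: Witt's extension theorem supplies an orthogonal transformation carrying \(n_\infty\) to any prescribed null vector, the exact sequence \eqref{Lipschitz-exact-seq} lifts it to a Vahlen matrix \(B\), and \(\gamma(Be) = B n_\infty \reversal{B}\) agrees with \(\sigma_B(n_\infty)\) up to the real scalar \(B\conj{B}\), hence defines the same point projectively. The preliminary sample-point computations are redundant once that argument is in place (and contain small sign slips, e.g.\ \(v\conj{v} = -v^2 = Q(v)\), not \(-Q(v)\), and \(x\conj{y}|_{x=v,\,y=1} = v\), not \(\conj{v}\)), but they do not affect the correctness of the proof.
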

\begin{proof}
By the Witt's extension theorem, for each null vector \(x \in \mathcal{N}\),
there exists an orthogonal transformation in \(\Ogroup(V \oplus \real^{1,1})\)
that takes \(n_\infty\) to \(x\).
Since the twisted adjoint action \(\sigma\) is surjective onto
\(\Ogroup(V \oplus \real^{1,1})\), there exists a Vahlen matrix \(A_x\)
such that \(\sigma_{A_x}(n_\infty) = x\).
By equations \eqref{axa-reversal} and \eqref{eq:pre-conformal_to_null}, we have
\begin{equation*}
\gamma(A_x e) = A_x n_\infty \Reversal{A_x}
\sim A_x n_\infty \Gradeinv{A_x}^{-1} = \sigma_{A_x}(n_\infty) = x.
\end{equation*}
Therefore, it becomes an equality \(\gamma(A_xe) = x\) in \(N(V)\).
This proves \(\gamma : W_{\mathrm{pre}} \to N(V)\) is surjective.
\end{proof}

\begin{lemma}\label{lmm:upper_triangle}
Let \(A = \begin{psmallmatrix} a & b \\ c & d \end{psmallmatrix}\)
be a Vahlen matrix.
If \(\gamma(Ae)\) is proportional to \(n_\infty\),
then \(c = 0\) and \(a, d \in \lipschitz(V)\).
\end{lemma}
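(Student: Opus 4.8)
The plan is to extract from the hypothesis the three scalar identities hidden inside $\gamma(Ae)$, use them to force $c=0$, and then read the Lipschitz membership of $a$ and $d$ off the remaining criteria of Proposition \ref{prop:Maks_criteria} together with the pseudo-determinant condition.

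First I would note that $Ae = \begin{psmallmatrix} a & 0 \\ c & 0 \end{psmallmatrix}$, so by \eqref{eq:pre-conformal_to_null_matrix}
\[
\gamma(Ae) = \begin{pmatrix} a\conj{c} & a\conj{a} \\ c\conj{c} & -a\conj{c}\end{pmatrix}.
\]
Since $A$ is a Vahlen matrix, $\gamma(Ae) = A n_\infty \reversal{A} = (A\conj{A})\,\sigma_A(n_\infty)$ by \eqref{axa-reversal}, and this is a nonzero null vector because $A\conj{A} \in \real^\times$ and $\sigma_A$ is an invertible linear map. Hence the assumption that $\gamma(Ae)$ is proportional to $n_\infty$ means $\gamma(Ae) = \lambda n_\infty$ with $\lambda \in \real^\times$, and comparing entries gives $a\conj{a} = \lambda \in \real^\times$, $a\conj{c} = 0$, and $c\conj{c} = 0$. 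From $a\conj{a} \in \real^\times$ the element $a$ is invertible (a one-sided inverse in a finite-dimensional algebra is two-sided), so $a\conj{c}=0$ forces $\conj{c} = 0$, and therefore $c=0$ since Clifford conjugation is a bijection.

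With $c = 0$ we have $A = \begin{psmallmatrix} a & b \\ 0 & d \end{psmallmatrix}$, whose pseudo-determinant is $\Delta(A) = a\reversal{d} - b\reversal{c} = a\reversal{d}$; by part (6) of Proposition \ref{prop:Maks_criteria} this lies in $\real^\times$, so write $\mu = \Delta(A) \in \real^\times$. Since $a$ is invertible, $\reversal{d} = \mu a^{-1}$, and applying the grade involution, $\conj{d} = \gradeinv{\reversal{d}} = \mu\,\gradeinv{a}^{-1}$. Now part (4) of Proposition \ref{prop:Maks_criteria} (with $c = 0$) says $av\conj{d} \in V$ for all $v \in V$; substituting the formula for $\conj{d}$ turns this into $\mu\,av\gradeinv{a}^{-1} = \mu\,\sigma_a(v) \in V$, hence $\sigma_a(v) \in V$ for every $v \in V$. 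Together with $a \in \clifford^\times(V)$ this is exactly the statement $a \in \lipschitz(V)$. Finally, $\reversal{d} = \mu a^{-1}$ lies in $\lipschitz(V)$ because $\lipschitz(V)$ is a group containing $\real^\times$, and since $\lipschitz(V)$ is generated by $\real^\times$ and the invertible vectors it is stable under the reversal (which sends $\alpha v_1\cdots v_k$ to $\alpha v_k\cdots v_1$); therefore $d \in \lipschitz(V)$ as well.

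I expect the only delicate point to be the bookkeeping with the three involutions in the last paragraph — in particular deducing $\conj{d} = \mu\gradeinv{a}^{-1}$ from $\reversal{d} = \mu a^{-1}$ — together with the small but worth-stating facts that $a\conj{a}\in\real^\times$ makes $a$ genuinely invertible and that $\lipschitz(V)$ is reversal-invariant. Everything else is a direct unwinding of Maks' criteria. As an alternative to the final step one may instead verify the conditions of Corollary \ref{cor:lipschitz_conjugate} for $d$ directly: $d\conj{d} = \mu^{2}(\gradeinv{a}\reversal{a})^{-1} \in \real^\times$, and $dv\reversal{d} = \tfrac{\mu}{a\conj{a}}\,dv\conj{a} \in V$, the latter because the bottom-right entry of $Av\reversal{A} \in V\oplus\real^{1,1}$ equals $-dv\conj{a}$.
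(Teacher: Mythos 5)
Your proof is correct. It follows the same overall strategy as the paper's — extract the entry-wise relations forced by the proportionality to \(n_\infty\) and feed them into Maks' criteria (Proposition \ref{prop:Maks_criteria}) — but the individual deductions are organized differently. The paper rewrites \(A n_\infty \gradeinv{A}^{-1} = \lambda n_\infty\) as \(A n_\infty = \lambda n_\infty \gradeinv{A}\) and reads off \(c=0\) and \(a = \lambda\gradeinv{d}\) simultaneously from that single matrix identity; you instead compare the entries of \(\gamma(Ae)\) itself, obtain \(a\conj{a}=\lambda\in\real^\times\), \(a\conj{c}=0\), \(c\conj{c}=0\), and then need the extra (correct) observation that \(a\) is invertible in order to kill \(c\), recovering the relation between \(a\) and \(d\) afterwards from the pseudo-determinant. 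Your route to \(a\in\lipschitz(V)\) is also slightly different: criterion (4) with \(c=0\) shows \(\sigma_a\) preserves \(V\), so \(a\) satisfies the definition of the Lipschitz group directly, whereas the paper goes through Corollary \ref{cor:lipschitz_conjugate}. Both arguments are complete; yours is a bit longer but makes explicit two points the paper leaves implicit — that the proportionality constant is nonzero because \(\gamma(Ae) = (A\conj{A})\,\sigma_A(n_\infty) \neq 0\), and that \(\lipschitz(V)\) is stable under the reversal, which you need to pass from \(\reversal{d}\in\lipschitz(V)\) to \(d\in\lipschitz(V)\).
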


\begin{proof}
By equation \eqref{axa-reversal}, we have
\begin{equation*}
n_\infty \sim \gamma(Ae) = A n_\infty \reversal{A}
\sim A n_\infty \gradeinv{A}^{-1}.
\end{equation*}
Write \(A n_\infty \gradeinv{A}^{-1} = \lambda n_\infty\) for some
\(\lambda \in \real^\times\), then
\begin{equation*}
  \begin{pmatrix} 0 & a \\ 0 & c \end{pmatrix}
  = A n_\infty = \lambda n_\infty \gradeinv{A} =
  \lambda \begin{pmatrix} -\gradeinv{c} & \gradeinv{d} \\ 0 & 0 \end{pmatrix}.
\end{equation*}
We conclude that \(c=0\) and \(a = \lambda \gradeinv{d}\).
From Proposition \ref{prop:Maks_criteria} we see that
\(\Delta(A) = \lambda^{-1} a \conj{a} \in \real^\times\) and
\(\lambda^{-1} a v \reversal{a} \in V\) for all \(v\in V\).
Corollary \ref{cor:lipschitz_conjugate} implies \(a\) and
\(d = \lambda^{-1} \gradeinv{a} \in \lipschitz(V)\).
\end{proof}

\begin{lemma}\label{prop:gamma_kernel}
  Let \(A_1 = \begin{psmallmatrix} a_1 & b_1 \\ c_1 & d_1 \end{psmallmatrix}\)
  and \(A_2 = \begin{psmallmatrix} a_2 & b_2 \\ c_2 & d_2 \end{psmallmatrix}\)
  be two Vahlen matrices.
  Then \(\gamma(A_1 e)\) and \(\gamma(A_2 e)\) are proportional if and only if
  there exists an \(r \in \lipschitz(V)\) such that
  \((a_1, c_1) = (a_2 r, c_2 r)\).
\end{lemma}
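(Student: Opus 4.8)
The plan is to prove the two implications separately: the "if" direction is a direct computation with $\gamma$, while the "only if" direction is reduced, via the left action of the Vahlen group, to the case in which $\gamma(A_1 e)$ is proportional to $n_\infty$, and that case is handled by Lemma~\ref{lmm:upper_triangle}.

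For the "if" direction, suppose $(a_1, c_1) = (a_2 r, c_2 r)$ with $r \in \lipschitz(V)$. By Corollary~\ref{cor:lipschitz_conjugate} the element $r\conj{r}$ is a nonzero real scalar, hence central, and combining this with the anti-homomorphism property $\Conj{ab} = \conj{b}\conj{a}$ gives $a_1\conj{a_1} = (r\conj{r})\,a_2\conj{a_2}$, $c_1\conj{c_1} = (r\conj{r})\,c_2\conj{c_2}$, and $a_1\conj{c_1} = (r\conj{r})\,a_2\conj{c_2}$. Substituting these into the matrix formula \eqref{eq:pre-conformal_to_null_matrix} for $\gamma$ yields $\gamma(A_1 e) = (r\conj{r})\,\gamma(A_2 e)$, so $\gamma(A_1 e)$ and $\gamma(A_2 e)$ are proportional.

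For the "only if" direction I would first record the special case: if $\gamma(A_1 e)$, and hence also $\gamma(A_2 e)$, is proportional to $n_\infty$, then Lemma~\ref{lmm:upper_triangle} applied to $A_1$ and to $A_2$ gives $c_1 = c_2 = 0$ and $a_1, a_2 \in \lipschitz(V)$, so $r = a_2^{-1} a_1 \in \lipschitz(V)$ satisfies $(a_1, c_1) = (a_2 r, c_2 r)$. For the general case, observe that $\gamma(A_1 e)$ is a nonzero null vector of $V \oplus \real^{1,1}$, so by Witt's extension theorem there is $T \in \Ogroup(V \oplus \real^{1,1})$ with $T(\gamma(A_1 e)) = n_\infty$, and by surjectivity of $\sigma$ in \eqref{Lipschitz-exact-seq} we may lift $T$ to a Vahlen matrix $B$. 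Since $\gamma(BAe) = B\,\gamma(Ae)\,\reversal{B}$ is proportional to $\sigma_B(\gamma(Ae))$ (the same manipulation as in the proof of Lemma~\ref{lmm:gamma_surjective}), both $\gamma(BA_1 e)$ and $\gamma(BA_2 e)$ become proportional to $n_\infty$; moreover $BA_1$ and $BA_2$ are again Vahlen matrices because $\lipschitz(V \oplus \real^{1,1})$ is a group. Applying the special case to $BA_1$ and $BA_2$ produces $r \in \lipschitz(V)$ with $B\begin{psmallmatrix} a_1 \\ c_1 \end{psmallmatrix} = \bigl(B\begin{psmallmatrix} a_2 \\ c_2 \end{psmallmatrix}\bigr) r$, where the columns are the first columns of $A_1 e$ and $A_2 e$; cancelling the invertible matrix $B$ on the left gives $(a_1, c_1) = (a_2 r, c_2 r)$.

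The step I expect to be the main obstacle is the bookkeeping in the general case of the "only if" direction: one must check that the scalar factor $B\conj{B} \in \real^\times$ in $\gamma(BAe) = (B\conj{B})\,\sigma_B(\gamma(Ae))$ does not disturb proportionality, that the map $x \mapsto B x B'$ carries proportional vectors to proportional vectors, and that left-cancellation of $B$ on the first columns is legitimate. Once these points are in place the rest is routine: the "if" direction is a one-line substitution, and in the special case Lemma~\ref{lmm:upper_triangle} does all the work.
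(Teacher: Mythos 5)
Your proof is correct, and the ``if'' direction coincides with the paper's (the one-line computation $\gamma(A_1e)=(r\conj{r})\gamma(A_2e)$ via \eqref{eq:pre-conformal_to_null_matrix}). For the ``only if'' direction you take a mildly different route to the same key lemma. The paper forms the single Vahlen matrix $A_2^{-1}A_1$, observes that $\gamma(A_2^{-1}A_1e)=A_2^{-1}A_1 n_\infty\Reversal{A_2^{-1}A_1}=\lambda n_\infty$, applies Lemma~\ref{lmm:upper_triangle} once to conclude $A_2^{-1}A_1e=ea$ with $a\in\lipschitz(V)$, and reads off $A_1e=A_2ea$. You instead normalize both matrices by an auxiliary Vahlen matrix $B$ obtained from Witt's extension theorem and the surjectivity of $\sigma$ in \eqref{Lipschitz-exact-seq}, apply Lemma~\ref{lmm:upper_triangle} twice (to $BA_1$ and $BA_2$), and cancel $B$ on the first columns. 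Both arguments are sound; the bookkeeping you flag (the central scalar $B\conj{B}\in\real^\times$ not disturbing proportionality, and $B(Xr)=(BX)r$ by associativity with $B$ invertible) all checks out. The paper's version is a bit more economical: by working with $A_2^{-1}A_1$ it needs neither Witt's theorem nor the lifting step, since the quotient matrix automatically fixes $[n_\infty]$; your version has the small advantage of reusing verbatim the normalization device already deployed in Lemma~\ref{lmm:gamma_surjective}, at the cost of one extra application of the upper-triangular lemma.
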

\begin{proof}
Suppose first \((a_1, c_1) = (a_2 r, c_2 r)\) for some \(r \in\lipschitz(V)\),
then
\begin{equation*}
  \gamma(A_1 e) = \begin{pmatrix} a_1\conj{c}_1 & a_1\conj{a}_1  \\
    c_1 \conj{c}_1  & -a_1\conj{c}_1 \end{pmatrix}
  = r \conj{r} \begin{pmatrix} a_2\conj{c}_2 & a_2\conj{a}_2  \\
    c_2 \conj{c}_2  & -a_2\conj{c}_2 \end{pmatrix}
  = r \conj{r} \gamma(A_2 e).
\end{equation*}
Since \(r \conj{r} \in \real^\times\), we conclude that
\(\gamma(A_1 e)\) and \(\gamma(A_2 e)\) are proportional.

Conversely, suppose \(\gamma(A_1 e) = \lambda \gamma(A_2 e)\)
for some \(\lambda \in \real^\times\). Then
\begin{equation*}
  A_1 n_\infty \Reversal{A_1} = \gamma(A_1 e)
  = \lambda \gamma(A_2 e) = \lambda A_2 n_\infty \Reversal{A_2},
\end{equation*}
which implies
\begin{equation*}
  \gamma(A_2^{-1}A_1 e) = A_2^{-1} A_1 n_\infty \Reversal{A_2^{-1} A_1}
  = \lambda n_\infty.
\end{equation*}
Writing
\(  A_2^{-1} A_1 = \begin{psmallmatrix} a & b \\ c & d \end{psmallmatrix}\),
Lemma \ref{lmm:upper_triangle} implies that \(c = 0\) and
\(a\in \lipschitz(V)\), and thus
\begin{equation*}
A_2^{-1} A_1 e = \begin{pmatrix} a & b \\ 0 & d \end{pmatrix}
\begin{pmatrix} 1 & 0 \\ 0 & 0 \end{pmatrix}
= \begin{pmatrix} a & 0 \\ 0 & 0 \end{pmatrix}
= e a.
\end{equation*}
In other words, \(A_1 e = A_2 e a\), or in matrix form
\begin{equation*}
\begin{pmatrix} a_1 & 0 \\ c_1 & 0 \end{pmatrix}
= \begin{pmatrix} a_2 a & 0 \\ c_2 a & 0 \end{pmatrix}.
\end{equation*}
This finishes the proof.
\end{proof}

This lemma inspires the following definition of the conformal space.

\begin{definition}
The {\em conformal space} \(W\) of \(V\) is the pre-conformal space
\(W_{\mathrm{pre}}\) modulo the relation that \((x_1, y_1)\) and \((x_2, y_2)\)
are equivalent if and only if there exists an \(r \in \lipschitz(V)\)
such that \((x_1, y_1) = (x_2 r, y_2 r)\).
\end{definition}

\begin{theorem}
The map \(\gamma : W \to N(V)\) is well defined and is a bijection.
Moreover, \(\gamma\) intertwines the actions of
\(\lipschitz(V \oplus \real^{1,1})\), that is,
\(\gamma(A X) = \sigma_A(\gamma(X))\) for all Vahlen matrices
\(A\) and all \(X \in W\).
\end{theorem}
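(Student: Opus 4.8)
The plan is to obtain the theorem by assembling Lemmas \ref{lmm:gamma_surjective} and \ref{prop:gamma_kernel}, which already contain all the substantive content; what remains is almost entirely bookkeeping about quotients. First I would record that $\gamma$ is genuinely defined on the pre-conformal space at all: the matrix identity \eqref{eq:pre-conformal_to_null_matrix} exhibits $\gamma(Ae)$ as a function of the pair $(a,c)$, i.e. of the product $Ae$ alone, so two Vahlen matrices with the same first column yield the same value, and $\gamma : W_{\mathrm{pre}} \to \mathcal{N}$ is a well-defined map; post-composing with $\varpi$ gives $\gamma : W_{\mathrm{pre}} \to N(V)$.

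Next, well-definedness on the quotient $W$. If $(x_1,y_1)$ and $(x_2,y_2)$ represent the same class of $W$, say $(x_1,y_1)=(x_2r,y_2r)$ with $r\in\lipschitz(V)$, then the ``if'' direction of Lemma \ref{prop:gamma_kernel} says $\gamma(x_1,y_1)$ and $\gamma(x_2,y_2)$ are proportional in $\mathcal{N}$, hence represent the same point of $N(V)$; so $\gamma$ factors through $W$. For injectivity, suppose $X_1,X_2\in W$ with $\gamma(X_1)=\gamma(X_2)$ in $N(V)$; choosing Vahlen representatives $A_1e,A_2e$, this means $\gamma(A_1e)$ and $\gamma(A_2e)$ are proportional in $\mathcal{N}$, and the converse direction of Lemma \ref{prop:gamma_kernel} produces $r\in\lipschitz(V)$ with $(a_1,c_1)=(a_2r,c_2r)$, which is precisely the relation $A_1e\sim A_2e$, so $X_1=X_2$. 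Surjectivity of $\gamma:W\to N(V)$ is Lemma \ref{lmm:gamma_surjective} verbatim (it already factors through the surjection $W_{\mathrm{pre}}\twoheadrightarrow W$).

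It remains to treat the two group actions. I would first check that left multiplication descends from $W_{\mathrm{pre}}$ to $W$: for a Vahlen matrix $A$ one has $A(Be)=(AB)e$ with $AB$ again a Vahlen matrix (Lipschitz elements form a group), so $A$ preserves $W_{\mathrm{pre}}$; and since left multiplication by $A$ commutes with right multiplication by $r\in\lipschitz(V)$, $A$ sends equivalent pairs to equivalent pairs, hence acts on $W$. For the intertwining property, take $X=[Be]\in W$ and compute, using the anti-homomorphism property of the reversal,
\[
\gamma(AX)=\gamma\bigl((AB)e\bigr)=(AB)\,n_\infty\,\Reversal{AB}
  = A\bigl(B\,n_\infty\,\reversal{B}\bigr)\reversal{A}=A\,\gamma(Be)\,\reversal{A}.
\]
As in the proof of Lemma \ref{lmm:gamma_surjective}, equation \eqref{axa-reversal} together with $A\conj{A}=\Delta(A)\in\real^\times$ for a Vahlen matrix gives $A\,\xi\,\reversal{A}=\Delta(A)\,\sigma_A(\xi)$ for every $\xi\in V\oplus\real^{1,1}$; hence $\gamma(AX)$ and $\sigma_A(\gamma(Be))$ are proportional, so they are equal as points of $N(V)$, i.e. $\gamma(AX)=\sigma_A(\gamma(X))$.

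Honestly, there is no hard calculation left in this theorem — all the real work sits in Lemmas \ref{lmm:upper_triangle}, \ref{lmm:gamma_surjective}, and \ref{prop:gamma_kernel}. The only points that deserve care, and what I would flag as the ``main obstacle'' in the sense of getting the logic right, are the two descent checks: that $\gamma$ depends on a Vahlen matrix only through its first column (so that it is defined on $W_{\mathrm{pre}}$ to begin with), and that the left action of $\lipschitz(V\oplus\real^{1,1})$ respects the equivalence relation defining $W$. Once those are in place, each clause of the theorem is an immediate consequence of one of the preceding lemmas.
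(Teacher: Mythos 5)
Your proposal is correct and follows essentially the same route as the paper: well-definedness and injectivity from Lemma \ref{prop:gamma_kernel}, surjectivity from Lemma \ref{lmm:gamma_surjective}, and the intertwining from the computation $A\,\gamma(X)\,\reversal{A} \sim A\,\gamma(X)\,\gradeinv{A}^{-1} = \sigma_A(\gamma(X))$ via equation \eqref{axa-reversal}. The extra descent checks you flag (that $\gamma$ depends only on the first column, and that left multiplication respects the equivalence relation) are correct and are left implicit in the paper.
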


\begin{proof}
By Lemma \ref{prop:gamma_kernel}, \(\gamma\) is well defined and injective,
and, by Lemma \ref{lmm:gamma_surjective}, \(\gamma\) is surjective.

For an arbitrary element in \(W\) represented by \(X \in W_{\mathrm{pre}}\),
by equation \eqref{axa-reversal}, we have
\begin{equation*}
  \gamma(AX) = A X n_\infty \reversal{X} \reversal{A}
  \sim A X n_\infty \reversal{X} \gradeinv{A}^{-1}
  = \sigma_A( X n_\infty \reversal{X} ) = \sigma_A(\gamma(X)).
\end{equation*}
This becomes an equality in \(N(V)\) and proves that the two actions of
Vahlen matrices commute with \(\gamma\).
\end{proof}

\subsection{Geometry of the Conformal Space \(W\)}  \label{Geometry-subsection}

Recall that \cref{eq:conformal_embedding_matrix} was our inspiration for
the conformal space, so we wish to identify \(v \in V\) with \((v,1) \in W\),
but we first need to show that \((v,1) \in W\).
Indeed, \(A = \begin{psmallmatrix} v & 1 \\ 1 & 0 \end{psmallmatrix}\)
satisfies the conditions of Proposition \ref{prop:Maks_criteria},
thereby is a Vahlen matrix.
Geometrically, \(A\) is the composition of the inversion
\(\begin{psmallmatrix} 0 & 1 \\ 1 & 0 \end{psmallmatrix}\)
followed by the translation
\(\begin{psmallmatrix} 1 & v \\ 0 & 1 \end{psmallmatrix}\),
and so \(\gamma(Ae) = A n_\infty \reversal{A}\)
can be obtained from \(n_\infty\) by applying the inversion and
mapping \(n_\infty\) into \(n_0\), then translating the result by \(v\).

\Textcite{Maks.CliffordMobius} categorizes the points \((x,y)\) in \(W\)
into three classes.
\begin{itemize}
\item
  \(y\conj{y} \neq 0\).
  Then we have \(y^{-1} = \conj{y}/(y\conj{y})\) and
  \begin{equation*}
  \gamma \begin{pmatrix} x \\ y \end{pmatrix}
  = \begin{pmatrix} x\conj{y} & x\conj{x} \\
    y\conj{y}  & -x\conj{y} \end{pmatrix}
  \sim \begin{pmatrix} x y^{-1} & x\conj{x}(y\conj{y})^{-1} \\
        1 & -xy^{-1} \end{pmatrix}
  = \gamma \begin{pmatrix} xy^{-1} \\ 1 \end{pmatrix}.
\end{equation*}
  Thus, \((x,y)\) can be identified with \((xy^{-1}, 1)\) in \(W\)
  and in turn with \(xy^{-1}\) in \(V\).

\item
\(y \conj{y} = 0\) and \(x \conj{x} \neq 0\).
In this case, we have \(x^{-1} = \conj{x}/(x\conj{x})\) and
\begin{equation*}
\gamma \begin{pmatrix} x \\ y \end{pmatrix}
= \begin{pmatrix} -y\conj{x} & x\conj{x} \\
  y\conj{y}  & y\conj{x} \end{pmatrix}
\sim \begin{pmatrix} -yx^{-1} & 1 \\
  y\conj{y}(x\conj{x})^{-1} & yx^{-1} \end{pmatrix}
= \gamma \begin{pmatrix} 1 \\ yx^{-1} \end{pmatrix}.
\end{equation*}
Thus, \((x,y)\) can be identified with \((1, yx^{-1})\) in \(W\).
The Vahlen matrix that represents the inversion is
\(\begin{psmallmatrix} 0 & 1 \\ 1 & 0 \end{psmallmatrix}\),
and we recognize \((1, yx^{-1})\) as the inversion of \((yx^{-1}, 1)\).
In other words, \((x,y)\) represents the inversion of a null vector
\(yx^{-1} \in V\) and does not belong to \(V\).

\item
\(x \conj{x} = y \conj{y} = 0\) and \(x\conj{y} \ne 0\).
This class is empty in the Euclidean cases \(p=0\) or \(q=0\).
Otherwise, we can think of \((x,y)\) as the limiting point of the
line generated by the non-zero null vector \(x\conj{y} \in V\).
\end{itemize}

In terms of this identification of \(W\) with \(N(V)\) and embedding
\(V \hookrightarrow W\), the four types of conformal transformations
from Proposition \ref{types} can be represented by the following
Vahlen matrices:
\begin{enumerate}
\item
  parallel translations:
  \(\begin{psmallmatrix} 1 & b \\ 0 & 1 \end{psmallmatrix}\),
  where \(b \in V\);
\item
  orthogonal linear transformations:
  \(\begin{psmallmatrix} a & 0 \\ 0 & \gradeinv{a} \end{psmallmatrix}\),
  where \(a \in \lipschitz(V)\);
\item
  dilations:
  \(\begin{psmallmatrix} \sqrt{\lambda} & 0 \\
  0 & 1/\sqrt{\lambda} \end{psmallmatrix}\), where \(\lambda >0\);
\item
  the inversion:
  \(\begin{psmallmatrix} 0 & 1 \\ 1 & 0 \end{psmallmatrix}\).
\end{enumerate}
By Proposition \ref{types} and equation \eqref{Lipschitz-exact-seq},
every Vahlen matrix can be written as a finite product of matrices of these
four types.

Let \(U \subset V\) be a non-empty connected open subset.
When \(p+q>2\), by Theorem \ref{conformal-continuation-thm}, every conformal
transformation \(U \to \real^{p,q}\) can be described by a Vahlen matrix
acting through
\begin{equation}  \label{Vahlen-action}
\begin{pmatrix} a & b \\ c & d \end{pmatrix} x
= (ax+b)(cx+d)^{-1}
\end{equation}
where \(cx+d\) is invertible for all \(x\in U\).
Whenever we write a Vahlen matrix acting on \(x\) in this fashion,
we always assume that \(cx+d\) is invertible.

Before we go into the next section, we need to show that the product
\((x\reversal{c} + \reversal{d})(cx + d)\) is a real number.

\begin{lemma}  \label{cc-tilde}
  If \(A = \begin{psmallmatrix} a & b \\ c & d \end{psmallmatrix}\)
  is a Vahlen matrix, then \(\reversal{a} a,\: \reversal{b} b,\:
  \reversal{c} c,\: \reversal{d} d \in \real\).
\end{lemma}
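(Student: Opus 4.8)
The plan is to exploit the characterization of Vahlen matrices from Proposition \ref{prop:Maks_criteria}, in particular condition (1), which states $a\conj a, b\conj b, c\conj c, d\conj d \in \real$. The key observation is that for any $z \in \clifford(V)$, the products $z\conj z$ and $\reversal z z$ differ only by an application of the grade involution: since $\conj z = \gradeinv{\reversal z}$, we have $z\conj z = z\,\gradeinv{\reversal z}$, whereas $\reversal z z$ is a genuinely different expression. So the naive approach of just applying $\widehat{\phantom{x}}$ or $\widetilde{\phantom{x}}$ to $a\conj a \in \real$ does not immediately give what we want; some honest work is needed.

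First I would recall that $a, b, c, d$ are not arbitrary elements of $\clifford(V)$ but are constrained. The cleanest route is to use the fact (Proposition following Corollary \ref{cor:lipschitz_conjugate}, and the proof of Corollary \ref{cor:lipschitz_conjugate} itself) that a Vahlen matrix $A$ can be written as $A = \alpha M_1 M_2 \cdots M_k$ where $\alpha \in \real^\times$ and each $M_i$ is an invertible vector in $V \oplus \real^{1,1} \subset \Mat(2,\clifford(V))$. Rather than chase this through the matrix entries directly, I would instead argue entry-by-entry using the scalar identity: for $z \in \clifford(V)$, one has $\reversal z z \in \real$ if and only if $z \conj z \in \real$, \emph{provided} $z$ lies in (the linear span of products of at most... ) — more precisely, I would show that each of $a,b,c,d$ individually lies in $\real \oplus \lipschitz(V) \cup \{0\}$, i.e. each is either zero or a nonzero scalar multiple of an element of $\lipschitz(V)$. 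Indeed, from Proposition \ref{prop:Maks_criteria}, conditions (1) and (5) say $a\conj a \in \real$ and $a\reversal b = b\reversal a$, and together with the other conditions one can check (this is essentially contained in Maks's analysis, and also follows from Corollary \ref{cor:lipschitz_conjugate}) that if $a \ne 0$ then $a \in \lipschitz(V)$ — the conditions $a\conj a \in \real^\times$ and $a v \reversal a \in V$ for all $v$ (the latter coming from condition (4) specialized appropriately, or from the fact that $A v \reversal A \in V\oplus\real^{1,1}$ forces the diagonal-block structure) are exactly the hypotheses of Corollary \ref{cor:lipschitz_conjugate}.

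Once each nonzero entry among $a,b,c,d$ is known to lie in $\lipschitz(V)$, the conclusion is immediate: for $z \in \lipschitz(V)$, write $z = \beta v_1 \cdots v_m$ with $v_i \in V$ not null and $\beta \in \real^\times$; then
\begin{equation*}
  \reversal z z = \beta^2 (v_m \cdots v_1)(v_1 \cdots v_m)
  = \beta^2 \prod_{i=1}^m Q(v_i)^{-1}\cdot(-1)^m \cdot \text{(telescoping)},
\end{equation*}
or more simply $\reversal z z = \beta^2 v_m \cdots v_1 v_1 \cdots v_m = \beta^2 (-Q(v_1)) v_m \cdots v_2 v_2 \cdots v_m = \cdots = \beta^2 (-1)^m Q(v_1)\cdots Q(v_m) \in \real$, using $v_i v_i = -Q(v_i)$ repeatedly from the center outward. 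The same telescoping also handles the degenerate case $z = 0$ trivially. I expect the main obstacle to be the bookkeeping in the first part — rigorously extracting from Proposition \ref{prop:Maks_criteria} that each individual nonzero entry is a Lipschitz element, since a priori the criteria mix the entries together (e.g. $a\conj c \in V$, $av\conj d - bv\conj c \in V$). The resolution is that specializing to $v = n_0$ and $v = n_\infty$ in the computations $A n_0 \reversal A$, $A n_\infty \reversal A$ from the proof of Proposition \ref{prop:Maks_criteria} already isolates $a\conj a, b\conj b, c\conj c, d\conj d$ on the diagonal, and combined with condition (5) this gives $a\conj a \in \real$ and $a\reversal a = (\,\text{apply reversal to }a\conj a\,)$... — so in fact the slickest finish may be simply: apply the reversal anti-automorphism to the scalar $a\conj a \in \real$ to get $\reversal{a\conj a} = \Reversal{\gradeinv{\reversal a}}\,\reversal a = \conj{\reversal a}\,\reversal a$; this is not yet $\reversal a a$, so one still needs the Lipschitz structure, confirming that the entry-is-Lipschitz step is genuinely the crux and cannot be bypassed by a one-line involution argument.
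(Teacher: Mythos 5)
There is a genuine gap in the crux step of your plan. You reduce the lemma to the claim that each nonzero entry of a Vahlen matrix is a nonzero real multiple of an element of \(\lipschitz(V)\), to be extracted via Corollary \ref{cor:lipschitz_conjugate}. But that corollary requires \(a\conj{a}\in\real^{\times}\), whereas Proposition \ref{prop:Maks_criteria} only gives \(a\conj{a}\in\real\), and in the indefinite case this can vanish for a nonzero entry. Concretely, take a nonzero null vector \(v\in V\) (so \(Q(v)=0\)); then \(A=\begin{psmallmatrix} v & 1 \\ 1 & 0\end{psmallmatrix}\) is a Vahlen matrix (this is exactly the matrix used in Subsection \ref{Geometry-subsection} to embed \(V\) into \(W\)), its entry \(a=v\) is nonzero but not invertible, hence not in \(\real^{\times}\cdot\lipschitz(V)\), since every element of \(\lipschitz(V)\) is invertible. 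The lemma's conclusion still holds for this entry (\(\reversal{v}v=v^2=-Q(v)=0\)), but your argument cannot reach it; even the stronger statement of Maks quoted at the end of Section \ref{Vahlen} only covers \emph{invertible} entries. Your closing telescoping computation \(\reversal{z}z=\beta^2(-1)^mQ(v_1)\cdots Q(v_m)\) is fine for genuine Lipschitz elements, but the reduction to that case fails precisely in the signature setting this paper is about.

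The paper's own proof sidesteps entries entirely: since \(A\in\lipschitz(V\oplus\real^{1,1})\), Corollary \ref{cor:lipschitz_conjugate_2} (applied to \(A\) in the big Clifford algebra, not to its entries) gives \(\conj{A}\,n_\infty\,A\in V\oplus\real^{1,1}\), and computing this matrix product yields off-diagonal entries \(\reversal{d}d\) and \(-\reversal{c}c\), which must therefore be real; the same computation with \(n_0\) gives \(\reversal{a}a,\reversal{b}b\in\real\). So, contrary to your closing remark, there \emph{is} a short involution-flavored argument — it just uses the ``reversed'' sandwich \(\conj{A}\,x\,A\) rather than applying reversal to the scalar \(a\conj{a}\). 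If you want to repair your approach, you would need to show each entry is a (possibly non-invertible) product of vectors of \(V\), which is neither established in the paper nor obviously true; the route through \(\conj{A}\,n_0\,A\) and \(\conj{A}\,n_\infty\,A\) is the intended one.
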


\begin{proof}
By Corollary \ref{cor:lipschitz_conjugate_2}, the product
\begin{equation*}
  \conj{A} n_\infty A
  = \begin{pmatrix}
            \reversal{d} & -\reversal{b} \\
            -\reversal{c} & \reversal{a}
        \end{pmatrix}
        \begin{pmatrix}
            0 & 1 \\
            0 & 0
        \end{pmatrix}
        \begin{pmatrix}
            a & b \\
            c & d
        \end{pmatrix}
        =
        \begin{pmatrix}
            \reversal{d} c & \reversal{d} d \\
            -\reversal{c} c & -\reversal{c} d
        \end{pmatrix}
\end{equation*}
lies in \(V \oplus \real^{1,1}\),
which implies \(\reversal{c} c, \: \reversal{d} d \in \real\).
Similarly, \(\conj{A} n_0 A \in V \oplus \real^{1,1}\) leads to
\(\reversal{a} a, \: \reversal{b} b \in \real\).
\end{proof}

\begin{corollary}  \label{real-denom-cor}
  If \(x \in V\), the product
  \((x\reversal{c} + \reversal{d})(cx + d) \in \real\).  
\end{corollary}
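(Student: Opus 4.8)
The plan is to expand the product and recognize each resulting piece as something already known to be a scalar. Writing
$$(x\reversal{c} + \reversal{d})(cx+d) = x\reversal{c}cx + x\reversal{c}d + \reversal{d}cx + \reversal{d}d,$$
the outer two summands are immediate: by Lemma \ref{cc-tilde} we have \(\reversal{c}c \in \real\) and \(\reversal{d}d \in \real\), so \(x\reversal{c}cx = (\reversal{c}c)\,x^2 = -(\reversal{c}c)Q(x) \in \real\) using \(x^2 = -Q(x)\) from \eqref{xy+yx}. Everything therefore reduces to showing that the cross term \(x\reversal{c}d + \reversal{d}cx\) is real.

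For the cross term I would reuse the matrix computation from the proof of Lemma \ref{cc-tilde}: since \(A\) is a Vahlen matrix, Corollary \ref{cor:lipschitz_conjugate_2} gives \(\conj{A}n_\infty A \in V \oplus \real^{1,1}\), and that same proof evaluates
$$\conj{A}n_\infty A = \begin{pmatrix} \reversal{d}c & \reversal{d}d \\ -\reversal{c}c & -\reversal{c}d \end{pmatrix} \in V \oplus \real^{1,1}.$$
Now, by the formula for \(j(x)\) in the proof of Proposition \ref{prop:11_periodicity}, the elements of \(V \oplus \real^{1,1}\) viewed inside \(\Mat(2,\clifford(V))\) are precisely the matrices of the form \(\begin{psmallmatrix} u & s \\ t & -u \end{psmallmatrix}\) with \(u \in V\) and \(s, t \in \real\). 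Matching the two diagonal entries of \(\conj{A}n_\infty A\) against this shape forces \(\reversal{d}c = \reversal{c}d\), and this common value \(v := \reversal{c}d = \reversal{d}c\) lies in \(V\).

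With \(v \in V\) in hand, \eqref{xy+yx} gives \(x\reversal{c}d + \reversal{d}cx = xv + vx = -2B(x,v) \in \real\), and summing the three contributions yields \((x\reversal{c}+\reversal{d})(cx+d) \in \real\). I do not expect a genuine obstacle here: the substance is already contained in the proof of Lemma \ref{cc-tilde}, and the only point requiring care is the explicit identification of \(V \oplus \real^{1,1}\) with the set of \(2 \times 2\) matrices whose two diagonal entries are negatives of each other, which is what couples \(\reversal{c}d\) to \(\reversal{d}c\).
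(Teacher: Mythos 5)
Your proof is correct, but it follows a genuinely different route from the paper's. The paper's proof is a one-line reduction: since $\begin{psmallmatrix} x & 1 \\ 1 & 0 \end{psmallmatrix}$ is itself a Vahlen matrix (verified in Subsection \ref{Geometry-subsection}, including for null $x$), the product $A \begin{psmallmatrix} x & 1 \\ 1 & 0 \end{psmallmatrix} = \begin{psmallmatrix} ax+b & a \\ cx+d & c \end{psmallmatrix}$ is again a Vahlen matrix having $cx+d$ as an entry, and Lemma \ref{cc-tilde} applied to that entry gives $\Reversal{(cx+d)}\,(cx+d) = (x\reversal{c}+\reversal{d})(cx+d) \in \real$ at once. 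You instead expand the product into three contributions and certify each one, which requires the extra observation --- correctly extracted from the requirement that the diagonal entries of $\conj{A} n_\infty A \in V \oplus \real^{1,1}$ be negatives of each other --- that $\reversal{c}d = \reversal{d}c$ is a single vector $v \in V$, after which \eqref{xy+yx} disposes of the cross term via $xv + vx = -2B(x,v)$. Both arguments are valid for all $x \in V$. The paper's route is shorter and leans on the group property of Vahlen matrices; yours is more hands-on and yields as a by-product the identity $\reversal{c}d = \reversal{d}c \in V$, a companion to conditions 2 and 5 of Proposition \ref{prop:Maks_criteria}.
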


\begin{proof}
Since \(cx+d\) is an entry of a Vahlen matrix
\begin{equation*}
    \begin{pmatrix}
        a & b \\ c & d
    \end{pmatrix}
    \begin{pmatrix}
        x & 1 \\ 1 & 0
    \end{pmatrix}
    =
    \begin{pmatrix}
        ax + b & a \\
        cx + d & c
    \end{pmatrix},
\end{equation*}
by Lemma \ref{cc-tilde}, \((x\reversal{c} + \reversal{d})(cx + d) \in \real\).
\end{proof}

Consequently, when \(cx+d\) is invertible, we have
\((x\reversal{c} + \reversal{d})(cx + d) \in \real^\times\).
It is worth mentioning that Maks \cite{Maks.CliffordMobius} claims
an even stronger result that if an entry in a Vahlen matrix
\(\begin{psmallmatrix} a & b \\ c & d \end{psmallmatrix}\) is invertible,
then that entry belongs to \(\lipschitz(V)\).

\section{Conformal Invariance of Monogenic Functions}  \label{invariance}

Monogenic function might not stay monogenic under translations by conformal
transformations.
That is, starting with a monogenic function \(f\) and a Vahlen matrix
\(A = \begin{psmallmatrix} a & b \\ c & d \end{psmallmatrix}\),
the composition function \(f(Ax)\) need not be monogenic, where
\(Ax\) is defined by \eqref{Vahlen-action}.
On the other hand, it is well known that in the positive definite case
(see \cite{Ryan.SingularitiesClifford, Bojarski1989}), the function
\begin{equation}  \label{eq:J_action}
  J_A(x) f(Ax) =
  \frac{(cx+d)^{-1}}{\absval{(x\reversal{c} + \reversal{d})(cx+d)}^{n/2-1}} f(Ax)
\end{equation}
is monogenic.
We extend this result to the case of the quadratic form \(Q\) on
the underlying vector space \(V \simeq \real^{p,q}\) having arbitrary signature.

\begin{lemma}
  Let \(A = \begin{psmallmatrix} a & b \\ c & d \end{psmallmatrix}\)
  be a Vahlen matrix, then
\begin{equation}\label{eq:transformation_difference}
\begin{split}
A x - A y &= \Delta(A) (y\reversal{c} + \reversal{d})^{-1} (x-y) (cx + d)^{-1} \\
&= \Delta(A) (x\reversal{c} + \reversal{d})^{-1} (x-y) (cy + d)^{-1},
\end{split}
\end{equation}
    for all \(x,y \in V\) such that \(cx+d\) and \(cy+d\) are invertible.
\end{lemma}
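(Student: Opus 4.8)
The plan is to compute $Ax - Ay$ directly from the definition \eqref{Vahlen-action} and manipulate the resulting expression into the claimed factored form using the Vahlen-matrix identities from \Cref{prop:Maks_criteria}. Writing $Ax = (ax+b)(cx+d)^{-1}$ and $Ay = (ay+b)(cy+d)^{-1}$, I would bring these over a common ``denominator'' on the right. The natural move is to observe that
\begin{equation*}
Ax - Ay = (ax+b)(cx+d)^{-1} - (ay+b)(cy+d)^{-1},
\end{equation*}
and to multiply and divide appropriately. A cleaner route uses the matrix factorization already exploited in \Cref{real-denom-cor}: since
\begin{equation*}
\begin{pmatrix} a & b \\ c & d \end{pmatrix}
\begin{pmatrix} x & 1 \\ 1 & 0 \end{pmatrix}
= \begin{pmatrix} ax+b & a \\ cx+d & c \end{pmatrix}
\end{equation*}
is a Vahlen matrix (a product of Vahlen matrices), \Cref{prop:Maks_criteria} applies to it, giving relations among $ax+b$, $cx+d$, $a$, $c$; in particular the fifth criterion $(ax+b)\reversal{(cx+d)} = ???$ and the pseudo-determinant computation yield $\reversal{(x\reversal c + \reversal d)}$ relations. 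Another key fact is that the Clifford conjugate of $cx+d$ equals $\reversal{(cx+d)}$ up to grade involution; more precisely, $\widebar{cx+d} = \reversal{\gradeinv{(cx+d)}}$, and since $cx+d$ lies in $\lipschitz(V)$ (when invertible, by Maks's remark) we have good control over $(cx+d)^{-1}$ in terms of $\conj{(cx+d)}$.

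The core computational step: I would show
\begin{equation*}
(y\reversal{c}+\reversal{d})(Ax - Ay)(cx+d) = \Delta(A)(x-y),
\end{equation*}
which rearranges to the first line of \eqref{eq:transformation_difference}, using that $(y\reversal c + \reversal d)$ and $(cx+d)$ are invertible reals-times-Lipschitz. To do this, expand the left side:
\begin{equation*}
(y\reversal c + \reversal d)\bigl[(ax+b)(cx+d)^{-1} - (ay+b)(cy+d)^{-1}\bigr](cx+d).
\end{equation*}
The first term becomes $(y\reversal c + \reversal d)(ax+b)$. The second is trickier because of the mismatched denominator; here I would insert $(cy+d)^{-1}(cy+d)$ and use \Cref{real-denom-cor} to commute the scalar $(y\reversal c + \reversal d)(cy+d)$ past things, or better, symmetrically multiply the whole identity on the left by $(cy+d)$ as well and verify a purely polynomial identity
\begin{equation*}
(cy+d)(y\reversal c + \reversal d)(Ax-Ay)(cx+d) = \Delta(A)\,(cy+d)(y\reversal c+\reversal d)\,(x-y)/(\text{scalar}),
\end{equation*}
then divide out. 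Cleaning this up, the essential polynomial identity to verify is
\begin{equation*}
(ay+b)\reversal{(cx+d)} - (ax+b)\reversal{(cy+d)} = \Delta(A)(y-x) \quad\text{(up to a reversal)},
\end{equation*}
which should follow from expanding using $a\reversal d - b\reversal c = \Delta(A)$, $a\reversal b = b\reversal a$, $c\reversal d = d\reversal c$, plus the fact that $x,y$ are vectors so $\reversal x = x$, $\reversal y = y$, and $x\reversal c, y\reversal c \in V$ with appropriate symmetry.

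The second equality in \eqref{eq:transformation_difference} I would obtain either by the symmetric version of the same computation (factoring the common denominator on the left instead of the right), or — more slickly — by applying the reversal anti-automorphism to the first equality: reversing $Ax - Ay$ and using that $Ax, Ay$ are vectors (hence reversal-fixed), together with $\Delta(A) \in \real$, should interchange the two forms after relabeling. \textbf{The main obstacle} I anticipate is bookkeeping the non-commutativity while moving the scalar factors $(x\reversal c + \reversal d)(cx+d) \in \real^\times$ and $\Delta(A) \in \real^\times$ through the Clifford products: one must be careful that an expression like $(y\reversal c + \reversal d)(ax+b)$ genuinely simplifies to a vector times a scalar, which ultimately rests on criteria (3) and (4) of \Cref{prop:Maks_criteria} applied to the augmented matrix. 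Getting the signs and the placement of reversals versus Clifford conjugations exactly right in these intermediate products is where the real care is needed, but no deep idea beyond \Cref{prop:Maks_criteria} is required.
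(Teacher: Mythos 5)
Your route---the direct computation---is genuinely different from the paper's. The paper explicitly notes that a direct calculation is possible (citing Ahlfors and Ryan) but chooses instead to argue by induction over generators: it checks the formula for the four elementary Vahlen matrices of Subsection \ref{Geometry-subsection} (translations, orthogonal transformations, dilations, the inversion), and then shows stability under composition, the whole content being the cocycle identity \(c_{21}x+d_{21}=(c_2(A_1x)+d_2)(c_1x+d_1)\) together with \(\Delta(A_2A_1)=\Delta(A_2)\Delta(A_1)\) (Corollary \ref{detAB}) and Proposition \ref{types}. The induction buys a proof with essentially no Clifford-algebra bookkeeping; your direct approach buys a self-contained identity that does not rely on the generation statement. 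Both are legitimate.

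However, your sketch has a concrete gap at its central step. After writing \(Ay=(ay+b)(cy+d)^{-1}=(y\reversal{c}+\reversal{d})^{-1}(y\reversal{a}+\reversal{b})\) (which uses that \(Ay\in V\) is fixed by the reversal---a fact you should justify from criteria 1, 2 and 4 of Proposition \ref{prop:Maks_criteria}), the bracket one must evaluate is
\begin{equation*}
(y\reversal{c}+\reversal{d})(ax+b)-(y\reversal{a}+\reversal{b})(cx+d)=\Delta(A)\,(x-y),
\end{equation*}
not the identity you propose. Your version, \((ay+b)\Reversal{(cx+d)}-(ax+b)\Reversal{(cy+d)}=\Delta(A)(y-x)\), expands to \(a(yx-xy)\reversal{c}+a(y-x)\reversal{d}-b(y-x)\reversal{c}\), which contains the uncancelled commutator term \(a(yx-xy)\reversal{c}\) (vectors \(x,y\) do not commute in \(\clifford(V)\)) and in which \(a,b\) sit on the wrong side of \(y-x\); no reversal fixes this. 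Moreover, expanding the correct bracket requires the \emph{left-handed} relations \(\reversal{c}a=\reversal{a}c\), \(\reversal{d}b=\reversal{b}d\), \(\reversal{d}a-\reversal{b}c=\reversal{a}d-\reversal{c}b=\Delta(A)\), which are absent from your list: they come from \(\conj{A}A=\Delta(A)\) (a consequence of \(A\conj{A}=\Delta(A)\in\real^\times\), since \(\conj{A}=\Delta(A)A^{-1}\)), i.e.\ from the transpose of computation \eqref{det-calculation}, not from criteria 5--6 as stated. With the bracket corrected and \(\conj{A}A=\Delta(A)\) added to the toolkit, the direct proof does go through, and the second line of \eqref{eq:transformation_difference} then follows by applying the reversal, as you indicate.
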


\begin{proof}
A proof by direct calculation is possible
(see \cite{Ahlfors.MobiusClifford, Ryan.ConformallyCovariant}).
Here, we give a proof by induction based on the fact that every
orthogonal transformation in \(\Ogroup(V \oplus \real^{1,1})\)
is a composition of translations, orthogonal transformations in \(\Ogroup(V)\),
dilations, and the inversion (Proposition \ref{types}).
It is straightforward to show that the formula holds for the Vahlen matrices
listed in Subsection \ref{Geometry-subsection} that produce translations,
orthogonal transformations, dilations, and the inversion on \(V\).
Then we show that if the formula is true for Vahlen matrices
\(A_1 = \begin{psmallmatrix} a_1 & b_1 \\ c_1 & d_1 \end{psmallmatrix}\) and
\(A_2 = \begin{psmallmatrix} a_2 & b_2 \\ c_2 & d_2 \end{psmallmatrix}\),
then the formula also holds for
\(A_{21}= A_2 A_1 =
\begin{psmallmatrix} a_{21} & b_{21} \\ c_{21} & d_{21} \end{psmallmatrix}\).
That is, we need to show
\begin{multline*}
  \Delta(A_{21}) (y \reversal{c}_{21} + \reversal{d}_{21})^{-1} (x-y)
  (c_{21} x + d_{21})^{-1}  \\
  = \Delta(A_2) \Delta(A_1) ((A_1 y) \reversal{c}_2 + \reversal{d}_2)^{-1}
  (y\reversal{c}_1 + \reversal{d}_1)^{-1} (x-y) (c_1x+d_1)^{-1}
  (c_2 (A_1 x) + d_2)^{-1}.
\end{multline*}
By Corollary \ref{detAB}, it is sufficient to show
\begin{equation*}
\begin{split}
(y \reversal{c}_{21} + \reversal{d}_{21})^{-1}
  &= ((A_1 y) \reversal{c}_2 + \reversal{d}_2)^{-1}
(y\reversal{c}_1 + \reversal{d}_1)^{-1}   \\
(c_{21} x + d_{21})^{-1} &= (c_1x+d_1)^{-1}(c_2 (A_1 x) + d_2)^{-1}.
\end{split}
\end{equation*}
These two identities are related to each other through the reversal operation,
so it is sufficient to show
\begin{equation} \label{eq:j_homomorphism}
  c_{21} x + d_{21} = (c_2 a_1 + d_2 c_1) x + (c_2 b_1 + d_2 d_1)
  = (c_2 (A_1 x) + d_2) (c_1x+d_1).
\end{equation}
This identity follows from \((A_1 x) (c_1x+d_1) = a_1 x + b_1\).

The second identity in \eqref{eq:transformation_difference} follows
from the first by applying the reversal.
\end{proof}

\begin{corollary}
  The partial derivatives of the conformal transformation produced by a
  Vahlen matrix \(A = \begin{psmallmatrix} a & b \\ c & d \end{psmallmatrix}\)
  can be expressed as
\begin{equation}\label{eq:transformation}
  \pdv{(A x)}{x_\mu} =
  \Delta(A) (x\reversal{c} + \reversal{d})^{-1} e_\mu (cx + d)^{-1} \\
  = \frac{\Delta(A)}{(x\reversal{c} + \reversal{d})(cx+d)} (cx+d) e_\mu
  (cx + d)^{-1}.
\end{equation}
\end{corollary}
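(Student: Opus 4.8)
The plan is to read off \eqref{eq:transformation} from the difference formula \eqref{eq:transformation_difference} by forming a difference quotient and passing to the limit, and then to convert the first expression into the second one using Corollary \ref{real-denom-cor}. Since \eqref{eq:transformation_difference} is already in hand, this is essentially a one–line differentiation once the bookkeeping is set up.

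Concretely, fix $x \in V$ with $cx+d$ invertible and fix an index $\mu$. Because invertibility is an open condition and $t \mapsto c(x+te_\mu)+d$ is continuous, $A(x+te_\mu)$ is defined for all real $t$ with $\absval{t}$ sufficiently small. Applying the first identity of \eqref{eq:transformation_difference} with the substitution "$x$"$\,\mapsto x+te_\mu$ and "$y$"$\,\mapsto x$ (so that the vector difference is $t e_\mu$) gives
\begin{equation*}
A(x+te_\mu) - Ax = \Delta(A)\,(x\reversal{c} + \reversal{d})^{-1}\,(t e_\mu)\,\bigl(c(x+te_\mu)+d\bigr)^{-1}.
\end{equation*}
Dividing by $t$ and letting $t \to 0$, the factor $\bigl(c(x+te_\mu)+d\bigr)^{-1}$ tends to $(cx+d)^{-1}$, so
\begin{equation*}
\pdv{(Ax)}{x_\mu} = \Delta(A)\,(x\reversal{c} + \reversal{d})^{-1}\, e_\mu\, (cx+d)^{-1},
\end{equation*}
which is the first equality in \eqref{eq:transformation} and in particular shows that $Ax$ is smooth in $x$ on the open set where $cx+d$ is invertible.

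For the second equality I would invoke Corollary \ref{real-denom-cor}: the quantity $\alpha := (x\reversal{c} + \reversal{d})(cx+d)$ is a nonzero real number, hence central in $\clifford(V)$, and therefore $(x\reversal{c} + \reversal{d})^{-1} = \alpha^{-1}(cx+d)$. Substituting this into the formula just obtained yields
\begin{equation*}
\pdv{(Ax)}{x_\mu} = \Delta(A)\,\alpha^{-1}(cx+d)\, e_\mu\, (cx+d)^{-1} = \frac{\Delta(A)}{(x\reversal{c} + \reversal{d})(cx+d)}\,(cx+d)\, e_\mu\, (cx+d)^{-1},
\end{equation*}
as claimed. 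There is no real obstacle here; the only things needing a word of care are the openness argument guaranteeing that $c(x+te_\mu)+d$ stays invertible for small $t$ (so the difference quotient makes sense), and the remark that a nonzero scalar may be moved freely past Clifford elements, which is what legitimizes rewriting $(x\reversal{c} + \reversal{d})^{-1}$.
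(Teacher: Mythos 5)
Your proposal is correct and matches the paper's own argument: the paper likewise obtains the first equality as the limit of the difference quotient $\bigl(A(x+he_\mu)-Ax\bigr)/h$ via \eqref{eq:transformation_difference}, and the second equality is the same immediate consequence of Corollary \ref{real-denom-cor} that you give.
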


\begin{proof}
Using equation \eqref{eq:transformation_difference}, we obtain
\begin{equation*}
\pdv{(A x)}{x_\mu} =  \lim_{h \to 0} \frac{A (x+h e_\mu) - Ax}h
  = \Delta(A) (x\reversal{c} + \reversal{d})^{-1} e_\mu (cx + d)^{-1}.
\end{equation*}
%\begin{multline*}
%  \pdv{(A x)}{x_\mu} =  \lim_{h \to 0} \frac{A (x+h e_\mu) - Ax}h  \\
%  = \lim_{h \to 0} \Delta(A) (x\reversal{c} + \reversal{d})^{-1} e_\mu
%  (c(x+he_{\mu}) + d)^{-1}
%  = \Delta(A) (x\reversal{c} + \reversal{d})^{-1} e_\mu (cx + d)^{-1}.
%\end{multline*}
\end{proof}

This expression for the partial derivatives allows us to verify directly
that the mapping \(x\mapsto Ax\) is a conformal transformation
(compare with \eqref{conformal-transf-condition}).

\begin{lemma}
The pull-back
\begin{equation*}
A^* B = \Omega_A^2 B,
\end{equation*}
where the conformal factor is
\begin{equation*}
  \Omega_A(x) = \frac{\Delta(A)}{(x\reversal{c} + \reversal{d})(cx+d)}.
\end{equation*}
\end{lemma}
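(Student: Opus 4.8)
The plan is to read off $A^\ast B$ directly from the formula for the partial derivatives of $x \mapsto Ax$ established in the preceding corollary, together with the Clifford identity $uv+vu = -2B(u,v)$ for $u,v \in V$ from \eqref{xy+yx}. Since $B$ is translation-invariant, the tensor $A^\ast B$ at a point $x$ is determined by its values on the coordinate vectors, so it suffices to prove
\[
  B\!\left(\pdv{(Ax)}{x_\mu},\, \pdv{(Ax)}{x_\nu}\right) = \Omega_A(x)^2\, B(e_\mu, e_\nu)
\]
for all indices $\mu,\nu$ and all $x$ at which $cx+d$ is invertible; this is exactly the assertion $A^\ast B = \Omega_A^2 B$ (compare \eqref{conformal-transf-condition}).

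The first step is to isolate the scalars. Because $A$ is a Vahlen matrix, $\Delta(A) \in \real^\times$ (Proposition \ref{prop:Maks_criteria}), and by Corollary \ref{real-denom-cor} the quantity $r := (x\reversal{c}+\reversal{d})(cx+d)$ is a nonzero real number whenever $cx+d$ is invertible. A nonzero scalar is central, so from $(x\reversal{c}+\reversal{d})(cx+d) = r$ we get $(x\reversal{c}+\reversal{d})^{-1} = r^{-1}(cx+d)$, and hence also $(cx+d)(x\reversal{c}+\reversal{d}) = r$; consequently
\[
  (cx+d)^{-1}(x\reversal{c}+\reversal{d})^{-1} = (x\reversal{c}+\reversal{d})^{-1}(cx+d)^{-1} = r^{-1}.
\]
Next I would substitute $\pdv{(Ax)}{x_\mu} = \Delta(A)\,(x\reversal{c}+\reversal{d})^{-1} e_\mu (cx+d)^{-1}$ — which lies in $V$, being the derivative of a $V$-valued map — into the product $\pdv{(Ax)}{x_\mu}\pdv{(Ax)}{x_\nu}$. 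The inner factor $(cx+d)^{-1}(x\reversal{c}+\reversal{d})^{-1}$ collapses to the scalar $r^{-1}$, leaving $\Delta(A)^2 r^{-1}(x\reversal{c}+\reversal{d})^{-1} e_\mu e_\nu (cx+d)^{-1}$. Symmetrizing in $\mu$ and $\nu$ and applying $e_\mu e_\nu + e_\nu e_\mu = -2B(e_\mu,e_\nu)$ replaces the middle by the scalar $-2B(e_\mu,e_\nu)$, and the surviving factor $(x\reversal{c}+\reversal{d})^{-1}(cx+d)^{-1}$ again collapses to $r^{-1}$. Dividing by $-2$ in accordance with \eqref{xy+yx} yields $B\bigl(\pdv{(Ax)}{x_\mu}, \pdv{(Ax)}{x_\nu}\bigr) = \Delta(A)^2 r^{-2} B(e_\mu,e_\nu) = \Omega_A(x)^2 B(e_\mu,e_\nu)$, since $\Omega_A(x) = \Delta(A)/r$.

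There is no serious obstacle here: once one observes that $\Delta(A)$ and $r$ are central scalars, every Clifford product in sight rearranges freely and the computation is only a few lines. The only point requiring a little care is the bookkeeping that both orderings of the inverse product equal $r^{-1}$, together with the easy but worth-stating remark that $\pdv{(Ax)}{x_\mu} \in V$ so that \eqref{xy+yx} is applicable. I would deliberately avoid the alternative route of arguing that the conjugation $v \mapsto (cx+d)\,v\,(cx+d)^{-1}$ is an isometry of $V$, since that presupposes $cx+d \in \lipschitz(V)$, which is only asserted and not proved in the excerpt; the direct computation above sidesteps this.
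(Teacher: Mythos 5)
Your proof is correct and is essentially the paper's argument run in the opposite direction: the paper conjugates the anticommutator relation $e_\mu e_\nu+e_\nu e_\mu=-2B(e_\mu,e_\nu)$ by $cx+d$ and substitutes the rearranged derivative formula, whereas you symmetrize the product $\pdv{(Ax)}{x_\mu}\pdv{(Ax)}{x_\nu}$ directly; both hinge on the same ingredients (equation \eqref{eq:transformation}, identity \eqref{xy+yx}, and the reality of $(x\reversal{c}+\reversal{d})(cx+d)$). Your explicit check that both orderings of the inverse product collapse to $r^{-1}$ is a point the paper glosses over, but there is no substantive difference.
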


\begin{proof}
  Write \(Ax = \sum_{\kappa=1}^n (Ax)_{\kappa}e_{\kappa}\), then,
  by \eqref{eq:transformation},
\begin{equation}  \label{eq:rearranged_transformation}
(cx+d) e_\mu (cx + d)^{-1}
%  = \frac{(x\reversal{c} + \reversal{d})(cx+d)}{\Delta(A)}\pdv{A x}{x_\mu}
= \frac{(x\reversal{c} + \reversal{d})(cx+d)}{\Delta(A)}
\sum_{\kappa=1}^n \pdv{(A x)_\kappa}{x_\mu} e_\kappa
\end{equation}
By \eqref{xy+yx}, we have
\begin{equation*}
-2 B(e_\mu, e_\nu) = (cx+d) (e_\mu e_\nu + e_\nu e_\mu) (cx + d)^{-1}.
\end{equation*}
Inserting \((cx+d)^{-1} (cx+d)\) between \(e_\mu\) and \(e_\nu\) and
applying \cref{eq:rearranged_transformation}, we obtain
\begin{equation}  \label{B-pullback-eqn}
\begin{split}
-2 B(e_\mu, e_\nu)
&= \left[ \frac{(x\reversal{c} + \reversal{d})(cx+d)}{\Delta(A)} \right]^2
\sum_{\kappa,\lambda=1}^n (e_\kappa e_\lambda + e_\lambda e_\kappa)
\pdv{(A x)_\kappa}{x_\mu} \pdv{(A x)_\lambda}{x_\nu}  \\
&= -2 \Omega_A(x)^{-2} \sum_{\kappa,\lambda=1}^n B(e_{\kappa},e_{\lambda})
\pdv{(A x)_\kappa}{x_\mu} \pdv{(A x)_\lambda}{x_\nu},
\end{split}
\end{equation}
and the result follows.
\end{proof}

\begin{corollary}
  Write \(\B\) for the matrix of the bilinear form with entries \(B(e_i,e_j)\)
  and \(\B^{-1}\) for its inverse. We have:
\begin{equation}  \label{part-deriv-sum}
  \sum_{\lambda,\mu=1}^n [\B^{-1}]_{\lambda\mu} e_\lambda (cx+d)^{-1}
  \pdv{(A x)_\kappa}{x_\mu}
  = \Omega_A(x) (cx+d)^{-1} \sum_{\nu=1}^n [\B^{-1}]_{\nu\kappa} e_{\nu}.
\end{equation}  
\end{corollary}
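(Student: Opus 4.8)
The plan is to reduce \eqref{part-deriv-sum} to the pull-back relation \eqref{B-pullback-eqn} by clearing the factor $(cx+d)^{-1}$. Write $g = cx+d$ (invertible at the points in question) and let $M$ be the Jacobian matrix with entries $M_{\kappa\mu} = \pdv{(A x)_\kappa}{x_\mu}$; these entries are real, hence central in $\clifford(V)$. Since $g$ is invertible, \eqref{part-deriv-sum} is equivalent to the identity obtained by multiplying both sides on the left by $g$, so I would work with that version.

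After left-multiplication by $g$, the right-hand side of \eqref{part-deriv-sum} becomes $\Omega_A(x)\sum_{\nu}[\B^{-1}]_{\nu\kappa} e_\nu$, and on the left-hand side the central scalars $M_{\kappa\mu}$ may be moved outside the products, leaving $\sum_{\lambda,\mu}[\B^{-1}]_{\lambda\mu} M_{\kappa\mu}\, g e_\lambda g^{-1}$. I would then substitute \eqref{eq:rearranged_transformation}: since $\tfrac{(x\reversal{c}+\reversal{d})(cx+d)}{\Delta(A)} = \Omega_A(x)^{-1}$, that equation says $g e_\lambda g^{-1} = \Omega_A(x)^{-1}\sum_{\kappa'} M_{\kappa'\lambda}\, e_{\kappa'}$. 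Collecting the coefficient of each $e_{\kappa'}$ turns the left-hand side into $\Omega_A(x)^{-1}\sum_{\kappa'}[M \B^{-1} M^{\tran}]_{\kappa'\kappa}\, e_{\kappa'}$.

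Finally I would evaluate $M \B^{-1} M^{\tran}$. Read as a matrix identity, \eqref{B-pullback-eqn} is exactly $M^{\tran}\B M = \Omega_A(x)^2\, \B$; inverting both sides gives $M \B^{-1} M^{\tran} = \Omega_A(x)^2\, \B^{-1}$. Substituting this makes the left-hand side collapse to $\Omega_A(x)\sum_{\kappa'}[\B^{-1}]_{\kappa'\kappa}\, e_{\kappa'}$, which equals the right-hand side after renaming $\kappa'$ as $\nu$; this completes the proof.

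I do not expect a genuine obstacle: once \eqref{eq:rearranged_transformation} and \eqref{B-pullback-eqn} are available, this is index bookkeeping. The two points to watch are that the partial derivatives $\pdv{(A x)_\kappa}{x_\mu}$ are real and therefore commute with everything in $\clifford(V)$ (which is what makes the rearrangement legal), and that $g = cx+d$ is invertible on the relevant open set (already part of our conventions for writing $Ax$ via \eqref{Vahlen-action}), so that left-multiplication by $g$ is reversible. A coordinate-free reading, which could replace the index computation: \eqref{eq:rearranged_transformation} says $u \mapsto g u g^{-1}$ is $\Omega_A(x)$ times the differential of $x \mapsto Ax$ at $x$, which is an element of $\Ogroup(V)$; then \eqref{part-deriv-sum} is precisely the statement that this orthogonal map is compatible with the passage from the basis $\{e_1,\dots,e_n\}$ to its $\B$-dual basis $\bigl\{\sum_\nu[\B^{-1}]_{\nu\mu} e_\nu\bigr\}_{\mu}$.
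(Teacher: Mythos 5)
Your argument is correct and uses exactly the same two ingredients as the paper's proof: the inverted matrix form $(\partial A)\,\B^{-1}(\partial A)^{\tran}=\Omega_A^2\,\B^{-1}$ of the pull-back relation \eqref{B-pullback-eqn}, together with \eqref{eq:rearranged_transformation} to convert $(cx+d)e_\lambda(cx+d)^{-1}$ into the Jacobian entries. You merely run the computation from the left-hand side instead of the right, so this is essentially the paper's proof.
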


\begin{proof}
Let \(\partial A\) denote the matrix of partial derivatives with
\(\pdv{(A x)_i}{x_j}\) in the \((ij)\)-entry.
Then equation \eqref{B-pullback-eqn} can be rewritten as
\begin{equation*}
(\partial A)^{\tran} \B (\partial A) = \Omega_A^2 \B,
\end{equation*}
which in turn can be rewritten as
\begin{equation*}
(\partial A) \B^{-1} (\partial A)^{\tran} = \Omega_A^2 \B^{-1}
\end{equation*}
or, equivalently,
\begin{equation*}
\Omega_A^2(x) [\B^{-1}]_{\nu\kappa}
= \sum_{\lambda,\mu=1}^n [\B^{-1}]_{\lambda\mu}
\pdv{(A x)_\nu}{x_\lambda} \pdv{(A x)_\kappa}{x_\mu}.
\end{equation*}
Hence, using \eqref{eq:transformation},
\begin{equation*}
\begin{split}
\sum_{\nu=1}^n [\B^{-1}]_{\nu\kappa} e_{\nu}
&= \Omega_A^{-2}(x) \sum_{\lambda,\mu,\nu=1}^n [\B^{-1}]_{\lambda\mu}
\pdv{(A x)_\nu}{x_\lambda} \pdv{(A x)_\kappa}{x_\mu} e_{\nu}  \\
&= \Omega_A^{-2}(x) \sum_{\lambda,\mu=1}^n [\B^{-1}]_{\lambda\mu}
\pdv{(A x)}{x_\lambda} \pdv{(A x)_\kappa}{x_\mu}  \\
&= \Omega_A^{-1}(x) \sum_{\lambda,\mu=1}^n [\B^{-1}]_{\lambda\mu}
   (cx+d) e_\lambda (cx+d)^{-1} \pdv{(A x)_\kappa}{x_\mu},
\end{split}
\end{equation*}
and the result follows.
\end{proof}
  
To each Vahlen matrix
\(A = \begin{psmallmatrix} a & b \\ c & d \end{psmallmatrix}\)
we associate a function
\begin{equation*}
J_A(x) = \frac{(cx+d)^{-1}}{\absval{(x\reversal{c} + \reversal{d})(cx+d)}^{n/2-1}}
\end{equation*}
defined for all \(x \in V\) such that \(cx+d\) is invertible.
Note that, by Corollary \ref{real-denom-cor},
the expression \((x\reversal{c} + \reversal{d})(cx + d)\)
in the denominator is always real.

\begin{lemma}\label{lmm:inductive_monogenicity}
Given Vahlen matrices \(A_1\) and \(A_2\), we have
    \begin{equation*}
        J_{A_2 A_1}(x) = J_{A_1}(x) J_{A_2}(A_1 x)
    \end{equation*}
    wherever both sides are defined.
\end{lemma}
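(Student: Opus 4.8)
The plan is to unwind the definition of $J_A$ and reduce the claimed identity to two facts already established in the excerpt: the cocycle relation $c_{21}x + d_{21} = (c_2(A_1x) + d_2)(c_1x + d_1)$ from \eqref{eq:j_homomorphism}, and the multiplicativity $\Delta(A_2A_1) = \Delta(A_2)\Delta(A_1)$ from Corollary~\ref{detAB}. Writing $A_{21} = A_2 A_1 = \begin{psmallmatrix} a_{21} & b_{21} \\ c_{21} & d_{21} \end{psmallmatrix}$, the denominator of $J_{A_{21}}(x)$ involves $(c_{21}x + d_{21})^{-1}$ and the scalar $\absval{(x\reversal{c}_{21} + \reversal{d}_{21})(c_{21}x + d_{21})}^{n/2-1}$. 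First I would handle the Clifford-algebra factor: by \eqref{eq:j_homomorphism} we have $c_{21}x + d_{21} = (c_2(A_1x) + d_2)(c_1x + d_1)$, hence
\[
  (c_{21}x + d_{21})^{-1} = (c_1x + d_1)^{-1}\,(c_2(A_1x) + d_2)^{-1},
\]
which is exactly the product of the Clifford factors of $J_{A_1}(x)$ and $J_{A_2}(A_1x)$, in the correct order.

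Next I would treat the scalar denominators. Applying the reversal anti-automorphism to \eqref{eq:j_homomorphism} gives $x\reversal{c}_{21} + \reversal{d}_{21} = (x\reversal{c}_1 + \reversal{d}_1)\,((A_1x)\reversal{c}_2 + \reversal{d}_2)$, so that
\[
  (x\reversal{c}_{21} + \reversal{d}_{21})(c_{21}x + d_{21})
  = (x\reversal{c}_1 + \reversal{d}_1)\bigl[((A_1x)\reversal{c}_2 + \reversal{d}_2)(c_2(A_1x)+d_2)\bigr](c_1x+d_1).
\]
The bracketed quantity is real by Corollary~\ref{real-denom-cor} (applied to $A_2$ at the point $A_1x$), and $(x\reversal{c}_1+\reversal{d}_1)(c_1x+d_1)$ is real by the same corollary applied to $A_1$; since reals are central, the right-hand side rearranges to the product of scalars $\bigl[(x\reversal{c}_1+\reversal{d}_1)(c_1x+d_1)\bigr]\cdot\bigl[((A_1x)\reversal{c}_2+\reversal{d}_2)(c_2(A_1x)+d_2)\bigr]$. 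Taking absolute values and raising to the power $n/2-1$ splits the scalar denominator of $J_{A_{21}}$ as the product of those of $J_{A_1}(x)$ and $J_{A_2}(A_1x)$, up to the factor $\absval{\Delta(A_2A_1)}^{?}$ — here one must be careful: the $\Delta(A_2A_1) = \Delta(A_2)\Delta(A_1)$ relation enters because $((A_1x)\reversal{c}_2+\reversal{d}_2)(c_2(A_1x)+d_2)$ must be re-expressed via the pull-back identity, and the $\Delta$ factors cancel cleanly between the two sides.

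The step I expect to be the main obstacle is verifying that \emph{all} the scalar factors cancel with exactly the right powers and signs, i.e.\ keeping track of $\absval{\,\cdot\,}^{n/2-1}$ versus the first powers that appear naturally, and confirming there is no residual $\Delta(A)$ or sign discrepancy. Concretely, one should note $\absval{(x\reversal{c}_{21}+\reversal{d}_{21})(c_{21}x+d_{21})}^{n/2-1} = \absval{(x\reversal{c}_1+\reversal{d}_1)(c_1x+d_1)}^{n/2-1}\cdot\absval{((A_1x)\reversal{c}_2+\reversal{d}_2)(c_2(A_1x)+d_2)}^{n/2-1}$ follows immediately from the factorization above, with no $\Delta$ appearing at all in the denominators — the $\Delta$'s live in \eqref{eq:transformation_difference}, not here — so in fact the identity $J_{A_2A_1}(x) = J_{A_1}(x)J_{A_2}(A_1x)$ drops out purely from \eqref{eq:j_homomorphism} and its reversal, together with Corollary~\ref{real-denom-cor} to guarantee the scalar factors are central and can be freely regrouped. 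I would close by remarking that both sides are defined precisely when $c_1x+d_1$ and $c_2(A_1x)+d_2$ are invertible, which by \eqref{eq:j_homomorphism} is equivalent to $c_{21}x+d_{21}$ being invertible, so the domains match.
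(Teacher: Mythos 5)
Your proposal is correct and follows essentially the same route as the paper's proof: both reduce the identity to the cocycle relation \eqref{eq:j_homomorphism} applied to the Clifford factor and (via the reversal anti-automorphism) to the real scalar denominator, using Corollary \ref{real-denom-cor} to regroup the central real factors. Your closing observation that no \(\Delta\)-factors enter at all is exactly right and matches the paper, which never invokes Corollary \ref{detAB} in this argument.
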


\begin{proof}
Let \(j_A(x) = cx + d\), then \(J_A\) can be expressed as
\begin{equation*}
J_A(x) = \frac{(j_A(x))^{-1}}{\absval{\Reversal{j_A(x)} j_A(x)}^{n/2-1}},
\end{equation*}
and equation \eqref{eq:j_homomorphism} can be restated as
\begin{equation*}
  j_{A_2 A_1}(x) = j_{A_2}(A_1x) j_{A_1}(x).
\end{equation*}
Therefore,
\begin{equation*}
\begin{split}
  J_{A_2 A_1}(x) &= \frac{(j_{A_2 A_1}(x))^{-1}}
  {\absval{\Reversal{j_{A_2 A_1}(x)} j_{A_2 A_1}(x)}^{n/2-1}} \\
  &= \frac{(j_{A_1}(x))^{-1}}{\absval{\Reversal{j_{A_1}(x)}j_{A_1}(x)}^{n/2-1}}
  \frac{(j_{A_2}(A_1 x))^{-1}}
   {\absval{\Reversal{j_{A_2}(A_1 x)} j_{A_2}(A_1 x)}^{n/2-1}} \\
&= J_{A_1}(x) J_{A_2}(A_1 x).
\end{split}
\end{equation*}
\end{proof}

\begin{theorem}  \label{main}
  Let \(U \subseteq V\) be an open set, \(\cal M\) a left
  \(\clifford(V)\)-module, and \(f: U \to {\cal M}\) a differentiable function.
  For each Vahlen matrix
  \(A = \begin{psmallmatrix} a & b \\ c & d \end{psmallmatrix}\),
\begin{equation}   \label{left-mono}
D\bigl( J_A(x) f(Ax) \bigr) = \Omega_A(x) J_A(x) (Df)(Ax)
\end{equation}
for all \(x \in A^{-1}(U) = \set{ v \in V ;\: Av \in U}\)
such that \(cx+d\) is invertible.
In particular, if \(f\) is left monogenic, so is \(J_A(x) f(A x)\).

Similarly, if \(\cal M'\) a right \(\clifford(V)\)-module,
and \(g: U \to {\cal M'}\) a differentiable function,
 \begin{equation}  \label{right-mono}
 \bigl( g(Ax) \Reversal{J_A} \bigr) D = \Omega_A(x) (gD)(x) \Reversal{J_A}(x)
 \end{equation}
 for all \(x \in A^{-1}(U) = \set{ v \in V ;\: Av \in U}\)
 such that \(cx+d\) is invertible, where
\begin{equation*}
  \Reversal{J_A}(x) =
  \frac{(x\reversal{c} + \reversal{d})^{-1}}
       {\absval{(x\reversal{c} + \reversal{d})(cx+d)}^{n/2-1}}.
\end{equation*}
In particular, if \(g\) is right monogenic, so is
\(g(A x) \Reversal{J_A}(x)\).
\end{theorem}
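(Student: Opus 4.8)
The plan is to prove the left-monogenic formula \eqref{left-mono} by reducing to the generators of the Vahlen group and then using the composition law in Lemma~\ref{lmm:inductive_monogenicity}. First I would handle the easy generators directly: for a parallel translation $\begin{psmallmatrix} 1 & b \\ 0 & 1 \end{psmallmatrix}$ we have $cx+d=1$, so $J_A\equiv 1$, $\Omega_A\equiv 1$, and \eqref{left-mono} is just the chain rule together with translation-invariance of $D$; for a dilation $\begin{psmallmatrix} \sqrt\lambda & 0 \\ 0 & 1/\sqrt\lambda \end{psmallmatrix}$ one gets $cx+d=1/\sqrt\lambda$, $\Omega_A=\lambda$, $J_A=\lambda^{(n/2-1)\cdot 0}\cdots$ a constant, and again the chain rule gives the scaling factor; for an orthogonal transformation $\begin{psmallmatrix} a & 0 \\ 0 & \gradeinv a \end{psmallmatrix}$ with $a\in\lipschitz(V)$ one uses that $D$ transforms covariantly under $\Ogroup(p,q)$, which follows from the basis-independence Lemma~\ref{D-basis-independence} (or Proposition~\ref{D-basis-free}) — this is where one checks that the left multiplication by $(cx+d)^{-1}=\gradeinv a^{-1}$ precisely absorbs the rotation.

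The substantive case is the inversion $A_0=\begin{psmallmatrix} 0 & 1 \\ 1 & 0 \end{psmallmatrix}$, where $Ax=x^{-1}=-x/Q(x)$, $cx+d=x$, $x\reversal c+\reversal d=x$, $\Omega_{A_0}(x)=1/x^2=-1/Q(x)$, and $J_{A_0}(x)=x^{-1}/\absval{x^2}^{n/2-1}$. Here I would compute $D\bigl(J_{A_0}(x)f(x^{-1})\bigr)$ by brute force, expanding with the product rule: $D$ hits the scalar factor $\absval{x^2}^{-(n/2-1)}$, the Clifford-vector factor $x^{-1}$, and (via the chain rule, using the corollary giving $\pdv{(A_0 x)}{x_\mu}$) the composite $f(x^{-1})$. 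The key algebraic facts needed are $\pdv{}{x_\mu}(x^2)=-2 e_\mu$ applied appropriately — more precisely $D(x^2)$ and $D(x^{-1})$ — together with the identity $(cx+d)e_\mu(cx+d)^{-1}=x e_\mu x^{-1}$ being the reflected basis vector, so that the ``$D$ hits $x^{-1}$'' term and the ``$D$ hits the scalar'' term combine to produce exactly the factor $\Omega_{A_0}(x)$ times $(x^{-1}$ applied to$)$ $(Df)(x^{-1})$ with the basis vectors correctly reindexed via $\B^{-1}$. I expect this is where Corollary~\eqref{part-deriv-sum} does the real work: it is precisely the statement that $\sum_{\lambda,\mu}[\B^{-1}]_{\lambda\mu}e_\lambda (cx+d)^{-1}\pdv{(Ax)_\kappa}{x_\mu}$ reassembles into $\Omega_A(x)(cx+d)^{-1}\sum_\nu[\B^{-1}]_{\nu\kappa}e_\nu$, which is exactly the term coming from the chain rule applied to $f\circ A$; and the remaining terms (from differentiating $J_A$ itself) must cancel — this cancellation, showing $D\bigl(J_{A_0}(x)\bigr)=0$ pointwise away from the null cone (i.e. $J_{A_0}$ is itself monogenic, the Cauchy-kernel-type fact), is the main obstacle and the one genuinely signature-sensitive computation, though the paper's setup is arranged so the signs work out uniformly.

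Finally, with \eqref{left-mono} established for the four generator types, I would run the induction: writing an arbitrary Vahlen matrix as $A=A_2A_1$ with the claim known for $A_1$ and $A_2$, apply $D$ to $J_{A_2A_1}(x)f(A_2A_1 x)=J_{A_1}(x)\,\bigl[J_{A_2}(A_1x)f(A_2(A_1x))\bigr]$ using Lemma~\ref{lmm:inductive_monogenicity}; the inner bracket is $h(A_1 x)$ where $h(y)=J_{A_2}(y)f(A_2 y)$, so the $A_1$-case gives $D\bigl(J_{A_1}(x)h(A_1x)\bigr)=\Omega_{A_1}(x)J_{A_1}(x)(Dh)(A_1x)$, and the $A_2$-case gives $(Dh)(y)=\Omega_{A_2}(y)J_{A_2}(y)(Df)(A_2y)$; composing and using the cocycle identity $\Omega_{A_2A_1}(x)=\Omega_{A_1}(x)\Omega_{A_2}(A_1x)$ (which I would note follows from $A^*B$ being multiplicative, i.e. $(A_2A_1)^*B=A_1^*(A_2^*B)$) yields \eqref{left-mono} for $A$. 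The right-monogenic statement \eqref{right-mono} then follows from \eqref{left-mono} by applying the reversal anti-automorphism, which swaps left and right module structures and turns $D$ acting on the left into $D$ acting on the right while sending $J_A$ to $\Reversal{J_A}$ and fixing the scalar $\Omega_A$.
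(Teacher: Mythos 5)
Your proposal is correct and follows essentially the same route as the paper: both reduce to the four generator types of Subsection \ref{Geometry-subsection} via Lemma \ref{lmm:inductive_monogenicity} and the cocycle identity for \(\Omega_A\), with the real work done by the chain-rule identity \eqref{part-deriv-sum} plus the monogenicity of the inversion kernel \(J_{A_0}(x)=\sgn(x^2)\,x/\absval{x^2}^{n/2}\). The paper organizes this slightly more economically — it establishes the product-rule identity \eqref{main-calculation} once for an arbitrary Vahlen matrix, reducing the whole theorem to the single claim \(DJ_A=0\), and then carries out the inversion computation that you defer (note also the small sign slip: \(\Omega_{A_0}(x)=\Delta(A_0)/x^2=-1/x^2=1/Q(x)\), not \(1/x^2\)) — but the ingredients and their roles are identical.
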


\begin{proof}
Choose a basis \(\set{e_1,\dots,e_n}\) of \(V\).
By equations \eqref{D-expression} and \eqref{part-deriv-sum},
\begin{equation}  \label{main-calculation}
\begin{split}
D \bigl( J_A(x) f(Ax) \bigr) &= (D J_A)(x) f(Ax)
+ \sum_{\kappa,\lambda,\mu=1}^n e_{\lambda} [\B^{-1}]_{\lambda\mu} J_A(x)
\pdv{(Ax)_{\kappa}}{x_{\mu}} \pdv{f}{x_{\kappa}}  \\
&= (D J_A)(x) f(Ax)
+ \Omega_A(x) J_A(x) \sum_{\kappa,\nu=1}^n e_{\nu} [\B^{-1}]_{\nu\kappa}
\pdv{f}{x_{\kappa}}  \\
&= (D J_A)(x) f(Ax) + \Omega_A(x) J_A(x) (Df)(Ax).
\end{split}
\end{equation}
Thus, it remains to prove that \(J_A(x)\) is left monogenic wherever
it is defined.

If \(A\) is one of the Vahlen matrix listed in
  Subsection \ref{Geometry-subsection} that produces a translation,
  orthogonal linear transformation, dilation on \(V\), then \(J_A\)
  is just a constant function, hence monogenic.
If \(A=\begin{psmallmatrix} 0 & 1 \\ 1 & 0 \end{psmallmatrix}\) represents
the inversion on \(V\), then
\begin{equation}  \label{J-inversion}
J_A(x) = \sgn(x^2) \frac{x}{\absval{x^2}^{n/2}}.
\end{equation}
    By direct calculation,
    \begin{equation*}
Dx=-n, \qquad D(x^2) = -2x, \qquad D \absval{x^2} = -2\sgn(x^2)x,
    \end{equation*}
\begin{equation*}
  D \biggl( \sgn(x^2) \frac{x}{\absval{x^2}^{n/2}} \biggr)
  = -\sgn(x^2) \frac{n}{\absval{x^2}^{n/2}} + \frac{nx^2}{\absval{x^2}^{n/2+1}} =0,
\end{equation*}
and \(J_A(x)\) is monogenic too.

By equation \eqref{main-calculation}, \(J_A(x) f(A x)\) is left monogenic
whenever \(f\) and \(J_A\) are left monogenic.
Thus, by Lemma \ref{lmm:inductive_monogenicity}, \(J_{A_2A_1}\) is
left monogenic whenever \(J_{A_1}\) and \(J_{A_2}\) are left monogenic.
Since each Vahlen matrix can be written as a finite product
of Vahlen matrices realizing translations, orthogonal transformations,
dilations and the inversion on \(V\),
this proves that \(J_A\) is left monogenic for all Vahlen matrices \(A\).
This finishes the proof of \eqref{left-mono}.

The proof of \eqref{right-mono} is similar.
\end{proof}

\appendix

\section{A Note on the Definition of the Dirac Operator} \label{appendix}

We introduced the Dirac operator with plus and minus signs in
\eqref{Dirac-operator}, and we have shown that this is a natural construction
from the point of view of basis independence.
Since this phenomenon does not happen in the positive definite case,
it is perhaps worthwhile to show that the classical formula
\cref{eq:J_action} fails if the Dirac operator were defined differently,
for example, having all positive signs:
\begin{equation*}
D^* = e_1 \pdv{}{x_1} + \cdots + e_p \pdv{}{x_p} + e_{p+1} \pdv{}{x_{p+1}}
+ \cdots + e_{p+q} \pdv{}{x_{p+q}}.
\end{equation*}
Note that this differential operator depends on the choice of basis
-- a different choice of orthogonal basis of \(V \simeq \mathbb{R}^{p,q}\)
satisfying \eqref{V-basis} typically leads to a different operator.
This can be seen by trying to adapt the proof of
Lemma \ref{D-basis-independence} for \(D^*\).

As a first example, consider \(\real^{2,1}\) with orthogonal generators of
\(\clifford(\real^{2,1})\) satisfying \(e_1^2 = -1\), \(e_2^2 = -1\) and
\(e_3^2 = 1\).
The function \(f(x) = x_1e_1 - x_2e_2\) is in the kernel of both
\(D^*\) and \(D\), but if we consider a Vahlen matrix
\begin{equation*}
 A = \begin{pmatrix} \cosh \alpha + e_2 e_3 \sinh \alpha & 0 \\
   0 & \cosh \alpha + e_2 e_3 \sinh \alpha \end{pmatrix}
\end{equation*}
producing a hyperbolic rotation in the \(e_2e_3\)-plane inside \(\real^{2,1}\),
we find that \(J_A(x) f(Ax)\) is not in the kernel of \(D^*\).
Indeed, by direct computation,
\begin{equation*}
  J_A(x) = \cosh \alpha - e_2 e_3 \sinh \alpha,
\end{equation*}
\begin{equation*}
  A e_1 = e_1, \qquad
  A e_2 = e_2 \cosh (2\alpha) + e_3 \sinh (2\alpha), \qquad
  A e_3 = e_2 \sinh (2 \alpha) + e_3 \cosh (2\alpha).
\end{equation*}
Therefore, we have
\begin{equation*}
\begin{split}  
  J_A(x) f(Ax) &= (\cosh \alpha - e_2 e_3 \sinh \alpha)
  \bigl( x_1 e_1 - ( x_2 \cosh(2\alpha) + x_3 \sinh (2\alpha)) e_2 \bigr)  \\
  &= x_1 e_1 (\cosh \alpha - e_2 e_3 \sinh \alpha)
  - ( x_2 \cosh (2\alpha) + x_3 \sinh (2\alpha) )
  ( e_2 \cosh \alpha - e_3 \sinh \alpha).
\end{split}
\end{equation*}
Then \(D^*\) acting on the first term yields
\begin{equation*}
  D^* \bigl[ x_1 e_1 (\cosh \alpha - e_2 e_3 \sinh \alpha) \bigr]
  = -(\cosh \alpha - e_2 e_3 \sinh \alpha),
\end{equation*}
whereas the second term results in
\begin{equation*}
  D^* \bigl[ - ( x_2 \cosh (2\alpha) + x_3 \sinh (2\alpha) )
    ( e_2 \cosh \alpha - e_3 \sinh \alpha) \bigr]
  = \cosh (3\alpha) + e_2 e_3 \sinh (3\alpha).
\end{equation*}
They clearly do not cancel each other.
To contrast, if we use the true Dirac operator,
\begin{equation*}
\begin{split}
  &D \bigl[ x_1 e_1 (\cosh \alpha - e_2 e_3 \sinh \alpha) \bigr]
  = -(\cosh \alpha - e_2 e_3 \sinh \alpha),  \\
  &D \bigl[ - ( x_2 \cosh (2\alpha) + x_3 \sinh (2\alpha) )
    ( e_2 \cosh \alpha - e_3 \sinh \alpha) \bigr]
  = \cosh \alpha - e_2 e_3 \sinh \alpha,
\end{split}
\end{equation*}
as desired.

In general, the true Dirac operator \(D\) plays well with orthogonal linear
transformations, while the ostensible Dirac operator \(D^*\) does not.
More precisely, restricting to the Vahlen matrices producing orthogonal
linear transformations on \(V\), that is, matrices of the form
\(\begin{psmallmatrix} a & 0 \\ 0 & \gradeinv{a} \end{psmallmatrix}\),
where \(a \in \lipschitz(V)\), equation \eqref{eq:J_action} becomes
\begin{equation*}
  J_A(x) f(Ax) =
  \frac{\gradeinv{a}^{-1}}{\absval{\reversal{a}a}^{n/2-1}} f(a x \gradeinv{a}^{-1})
\sim \gradeinv{a}^{-1} f(a x \gradeinv{a}^{-1}).
\end{equation*}
It was established in Theorem \ref{main} that
\begin{equation*}
  Df=0 \qquad \text{implies} \qquad
  D \bigl( \gradeinv{a}^{-1} f(a x \gradeinv{a}^{-1}) \bigr) = 0.
\end{equation*}
On the other hand, as the above example shows,
\begin{equation*}
  \text{when} \quad D^*f=0, \quad \text{typically} \quad
  D^* \bigl( \gradeinv{a}^{-1} f(a x \gradeinv{a}^{-1}) \bigr) \ne 0.
\end{equation*}

The ostensible Dirac operator \(D^*\) also does not behave well with respect
to the inversion. The constant function \(f(x)=1\) is annihilated by \(D^*\)
(and \(D\)). Applying to this function the inversion realized by
\(A=\begin{psmallmatrix} 0 & 1 \\ 1 & 0 \end{psmallmatrix}\)
results in \(J_A(x)\), and \(J_A(x)\) is not in the kernel of \(D^*\).
For concreteness, consider \(\real^{1,1}\) with generators of
\(\clifford(\real^{1,1})\) satisfying \(e_1^2 = -1\), \(e_2^2 = 1\).
By equation \eqref{J-inversion}, we have
\begin{equation*}
J_A(x) = \sgn(x^2) \frac{x}{\absval{x^2}}
= \frac{x_1e_1 + x_2e_2}{(x_2)^2 - (x_1)^2}.
\end{equation*}
Then \(D^*[x_1e_1 + x_2e_2] = 0\), and
\begin{equation*}
  D^* J_A(x) %= D^* \biggl( \frac{x_1e_1 + x_2e_2}{(x_2)^2 - (x_1)^2} \biggr)
  = 2 \frac{(x_1e_1-x_2e_2)(x_1e_1 + x_2e_2)}{((x_2)^2 - (x_1)^2)^2}
  = 2 \frac{-(x_1)^2-(x_2)^2+2x_1x_2e_1e_2}{((x_2)^2 - (x_1)^2)^2},
\end{equation*}
which is clearly non-zero.

%\bibliographystyle{ieeetr}
%\bibliography{References.bib}

\printbibliography

\end{document}